\newtheorem{theorem}{Theorem}[section]
\newtheorem{lemma}[theorem]{Lemma}
\newtheorem{proposition}[theorem]{Proposition}
\theoremstyle{definition}
\newtheorem{definition}[theorem]{Definition}
\numberwithin{equation}{section}
\begin{document}

\title{$C^{2,\alpha}$ regularities and estimates for nonlinear elliptic and parabolic equations in geometry\footnote{This article has been accepted for publication on Calculus of Variations and Partial Differential Equations. This is accepted version. The final publication is available at Springer via http://dx.doi.org/10.1007/s00526-015-0948-5}}
\author{Jianchun Chu}
\date{}
\maketitle

\begin{abstract}
We give sharp $C^{2,\alpha}$ estimates for solutions of some fully nonlinear elliptic and parabolic equations in complex geometry and almost complex geometry, assuming a bound on the Laplacian of the solution. We also prove the analogous results to complex Monge-Amp\`{e}re equations with conical singularities. As an application, we obtain a local estimate for Calabi-Yau equation in almost complex geometry. We also improve the $C^{2,\alpha}$ regularities and estimates for viscosity solutions to some uniformly elliptic and parabolic equations.  All our results are optimal regarding the H\"{o}lder exponent.
\end{abstract}

\section{Introduction}
Fully nonlinear elliptic and parabolic equations have natural connections with problems in geometry. And Schauder estimates for solutions to these equations with $C^{\alpha}$ right hand side are of considerable interest. Recently, Tosatti-Wang-Weinkove-Yang established $C^{2,\alpha}$ estimates for solutions of some nonlinear elliptic equations in complex geometry, assuming a bound on the Laplacian of the solution.

In this paper, we improve their result. On the basis of their work, we can lift $\alpha$ to $\gamma_{0}=\min(\alpha_{0},\beta_{0})$ (we explain $\alpha_{0}$ and $\beta_{0}$ later), which is the optimal H\"{o}lder exponent. For the reader's convenience, most of our notations are the same with \cite{TWWY}.

Let $(M,J,\omega)$ be a compact Hermitian manifold. Suppose $u\in C^{2}(M)$ is a real-valued function which satisfies
\begin{equation*}
\|u\|_{L^{\infty}(M)}\leq K~~and~~\Delta u\leq K,
\end{equation*}
where
\begin{equation*}
\Delta u=\frac{n\sqrt{-1}\partial\bar{\partial} u\wedge\omega^{n-1}}{\omega^{n}}.
\end{equation*}
We assume that $\psi\in C^{\alpha_{0}}(M)$ and $\chi$ is a real $(1,1)$ form with coefficients in $C^{\beta_{0}}(M)$, where $\alpha_{0},\beta_{0}\in(0,1)$. We point out that $\psi$ and $\chi$ may depend on $u$.

The partial differential equations discussed in this paper include complex Monge-Amp\`{e}re equation, complex Hessian equations, complex $\sigma_{n}/\sigma_{k}$ equations, Monge-Amp\`{e}re equation for $(n-1)$-plurisubharmonic equations, $(n-1)$-plurisubharmonic version of the complex Hessian and $\sigma_{n}/\sigma_{k}$ equations and almost complex versions of all of the above. Let us recall some results about these equations (for more details, see Section 1 of \cite{TWWY}, which is a good survey of these equations). Since all these equations (including $\psi$ and $\chi$) can be found in Section 1 of \cite{TWWY}, we do not list them in the following.

We discuss the complex Monge-Amp\`{e}re equation first. When $\chi$ is a fixed K\"{a}hler metric, the existence of solution was proved in the famous work of Yau \cite{Ya}. In \cite{Ya}, Yau used Calabi's estimate \cite{Ca} to establish the $C^{2,\alpha}$ estimate, which depends on third derivatives of $\psi$. In \cite{Si}, Siu used the Evans-Krylov approach to get the $C^{2,\alpha}$ estimate which depends on second derivatives of $\psi$ (also \cite{Tr}). In \cite{Ti83}, Tian presented a new proof of the $C^{2,\alpha}$ estimate (for real and complex Monge-Amp\`{e}re equations), which not only weakens the regularity assumptions on $\psi$ but also can be applied to more general nonlinear elliptic systems. His $C^{2,\alpha}$ estimate depends on the H\"{o}lder norm of $\psi$ and lower bound of $\Delta\psi$. Recently, Tian \cite{Ti13} extended his method to the conic case. In \cite{DZZ}, Dinew-Zhang-Zhang use a new method to establish the $C^{2,\alpha}$ estimate depending on the H\"{o}lder bound of $\psi$ and the bound for the real Hessian of $u$. And their estimate is optimal according to the H\"{o}lder exponent. A $C^{2,\alpha}$ estimate depending on the H\"{o}lder bound of $\psi$ and the bound of $\Delta u$ was given by Wang \cite{Wa}, and also Chen-Wang \cite{CW1402}.

When $\chi$ is just a Hermitian metric, the existence of solution was proved by Cherrier \cite{Ch} for $n=2$ (and with other hypotheses when $n>2$) and by Tosatti-Weinkove \cite{TW2} for any dimensions. In \cite{Ch}, Cherrier established the $C^{2,\alpha}$ estimate by using Calabi's estimate. In \cite{GL}, Guan-Li got the estimate for the real Hessian of $u$ by maximum principle first, then they proved the $C^{2,\alpha}$ estimate by Evans-Krylov theory. In \cite{TW1}, Tosatti-Weinkove obtained the $C^{2,\alpha}$ estimate via a similar approach given in \cite{Si}. All these estimates depend on at least second derivatives of $\psi$.

For the complex Hessian equations with $\chi=\omega$ K\"{a}hler, the existence of solution was proved by Dinew-Ko{\l}odziej \cite{DK12}. The $C^{2,\alpha}$ estimate was established by applying Evans-Krylov theory, which depends on second derivatives of $\psi$.

The complex $\sigma_{n}/\sigma_{k}$ equations can be regarded as generalizations of the complex Hessian equations. When $k=n-1$ and $\chi$ is a fixed K\"{a}hler metric, Song-Weinkove \cite{SW} obtained the necessary and sufficient conditions for the existence of solution via a parabolic method (see \cite{We04, We06}). Fang-Lai-Ma \cite{FLM} got the analogous result for general $k$. When $\chi$ is just a Hermitian metric, The complex $\sigma_{n}/\sigma_{k}$ equations were solved by Sun \cite{Su} (see also \cite{GS, Li1}). Note that the $C^{2,\alpha}$ estimate in \cite{Su} depends on second derivatives of $\psi$.

For the Monge-Amp\`{e}re equation for $(n-1)$-plurisubharmonic equations, Tosatti-Weinkove \cite{TW3} proved the existence of solution when $\chi$ is a fixed Hermitian metric and $\omega$ is a K\"{a}hler metric. And they solved the case of $\omega$ Hermitian in \cite{TW4} recently. The $C^{2,\alpha}$ estimates in these cases were established in \cite{TW3, TW4} by Evans-Krylov approach, which depends on second derivatives of $\psi$.

As far as we know, the $(n-1)$-plurisubharmonic version of the complex Hessian and $\sigma_{n}/\sigma_{k}$ equations have not been investigated up to now. And there is almost not existing result in the literature.

For almost complex versions of all of the above equations, Harvey-Lawson \cite{HL11} and Pli\'{s} \cite{Pl} solved the Dirichlet problem of the almost complex Monge-Amp\`{e}re equation. In \cite{De}, Delano\"{e} studied a related but different equation in the almost complex case. Calabi-Yau equation can be transformed into the almost complex Monge-Amp\`{e}re equation locally. For this equation, Evans-Krylov results were proved in Tosatti-Weinkove-Yau \cite{TWY} and Weinkove \cite{We07}.

In this paper, we prove
\begin{theorem}\label{Main Theorem}
Suppose that $u\in C^{2}(M)$ satisfies one of the equations above. Then we have $u\in C^{2,\gamma_{0}}(M)$ and the following estimate
\begin{equation*}
\|u\|_{C^{2,\gamma_{0}}(M)}\leq C,
\end{equation*}
where $\gamma_{0}=\min(\alpha_{0},\beta_{0})$ and $C$ depends only on $n$, $\alpha_{0}$, $\beta_{0}$, $(M,J,\omega)$, $K$, $\|\psi\|_{C^{\alpha_{0}}(M)}$ and $\|\chi\|_{C^{\beta_{0}}(M)}$.
\end{theorem}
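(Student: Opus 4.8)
The plan is to deduce the theorem from a single interior statement: in local coordinates the complex Hessian $u_{i\bar j}:=\partial_i\partial_{\bar j}u$ (or, in the $(n-1)$-plurisubharmonic cases, its invertible linear-algebraic substitute) lies in $C^{\gamma_0}$, after which the full real $C^{2,\gamma_0}$ bound follows by applying ordinary interior Schauder to $\Delta_\omega u\in C^{\gamma_0}$, the operator $\Delta_\omega=g^{i\bar j}\partial_i\partial_{\bar j}+(\text{lower order})$ being genuinely elliptic on $\mathbb{R}^{2n}$. I would first localize: cover $M$ by finitely many coordinate balls — $J$-adapted ones in the almost complex setting — on each of which every equation in the list becomes $F(u_{i\bar j}+\chi_{i\bar j},z)=\psi(z)$ on $B_1\subset\mathbb{C}^n$, with $F(\cdot,z)$ smooth, increasing and \emph{concave} on the relevant open cone (possible for all the listed operators after the standard normalization: $F=\log\det$, $F=\sigma_k^{1/k}$, $F=(\sigma_n/\sigma_k)^{1/(n-k)}$, etc.), $F(\cdot,z)$ of class $C^{\beta_0}$ in $z$, and $\psi\in C^{\alpha_0}$. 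In the almost complex case the non-integrability of $J$ contributes to $\chi$ a term $a^k_{i\bar j}(z)u_k+b^{\bar k}_{i\bar j}(z)u_{\bar k}$, which is Lipschitz once $u\in C^1$ and hence does not spoil the $C^{\beta_0}$ regularity of the effective $\chi$; in the $(n-1)$-plurisubharmonic equations the substitution $u_{i\bar j}\mapsto\widetilde u_{i\bar j}=\tfrac1{n-1}\big((\sum_k u_{k\bar k})g_{i\bar j}-u_{i\bar j}\big)$ (plus lower-order terms) is an invertible linear change preserving concavity and, for the linearization, uniform ellipticity. Finally, the hypotheses $\|u\|_{L^\infty}\le K$, $\Delta u\le K$ and ellipticity on the cone force $\chi_{i\bar j}+u_{i\bar j}$ into a fixed compact subset of the cone, so the linearization $a^{i\bar j}:=F^{i\bar j}(u_{i\bar j}+\chi_{i\bar j},z)$ is uniformly elliptic along the solution; note that the real operator $a^{i\bar j}\partial_i\partial_{\bar j}$ is then uniformly elliptic on $\mathbb{R}^{2n}$ — the ``degeneracy'' of $F$ in the $(2,0)$ directions is harmless for the linearization.

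The engine is a bootstrap on the Hölder exponent. The first stage — essentially \cite{TWWY} — provides $u_{i\bar j}\in C^{\nu_0}$ for some $\nu_0>0$: it is the complex (resp. almost-complex, resp. parabolic) Evans--Krylov / Caffarelli interior estimate for concave uniformly elliptic equations with Hölder ingredients, valid once the $C^{1,1}$-type control above is in hand. Suppose inductively that $u_{i\bar j}\in C^{\nu}$ with $0<\nu<\gamma_0$. Fixing a centre $y$, the real second Taylor polynomial $P_y$ of $u$ at $y$ exists (since $u\in C^{2,\nu}$, e.g. via $\Delta_\omega u\in C^\nu$) and has $(1,1)$-part $u_{i\bar j}(y)$; subtracting the equation frozen at $y$ from the equation at $x$ and using the fundamental theorem of calculus for $F$ in the Hessian slot, $w:=u-P_y$ solves the constant-coefficient uniformly elliptic linear equation
\begin{equation*}
a^{i\bar j}(y)\,w_{i\bar j}(x)=f(x),\qquad |f(x)|\le C\big(|x-y|^{\alpha_0}+|x-y|^{\beta_0}+|x-y|^{2\nu}\big),
\end{equation*}
where the $|x-y|^{\alpha_0}$, $|x-y|^{\beta_0}$ terms come from the oscillation of $\psi$ and of $F(\cdot,\cdot)$ in the base point, and $|x-y|^{2\nu}$ from $\big(a^{i\bar j}(y)-\int_0^1 F^{i\bar j}((1-t)u_{i\bar j}(y)+t u_{i\bar j}(x),y)\,dt\big)\big(u_{i\bar j}(x)-u_{i\bar j}(y)\big)$, both factors being $O(|x-y|^\nu)$. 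Since $w$ vanishes to second order at $y$ and $\|w\|_{L^\infty}$ is controlled, the pointwise Schauder estimate for the fixed uniformly elliptic operator $a^{i\bar j}(y)\partial_i\partial_{\bar j}$ yields a quadratic polynomial $q_y$ with $\sup_{B_r(y)}|w-q_y|\le C r^{2+\sigma}$ for all small $r$, where $\sigma:=\min(\alpha_0,\beta_0,2\nu)=\min(\gamma_0,2\nu)$ and $C$ is uniform in $y$. Thus $u$ is approximated to order $|x-y|^{2+\sigma}$ by the quadratic $P_y+q_y$ uniformly in $y$, and the standard polynomial-approximation lemma upgrades this to $u\in C^{2,\sigma}$, i.e. $u_{i\bar j}\in C^{\sigma}$. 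The exponent therefore obeys $\nu\mapsto\min(\gamma_0,2\nu)$, so after finitely many steps it reaches $\gamma_0$; carrying along the finitely many constants — each depending only on $n,\alpha_0,\beta_0,(M,J,\omega),K,\|\psi\|_{C^{\alpha_0}},\|\chi\|_{C^{\beta_0}}$ — gives $\|u_{i\bar j}\|_{C^{\gamma_0}}\le C$ and hence the stated bound.

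The parabolic equations are treated identically, with parabolic cylinders and parabolic Hölder norms and the (bounded) term $\partial_t u$ absorbed harmlessly. I expect the main obstacle to be precisely the design of the bootstrap so that it never invokes the (possibly very small) Evans--Krylov exponent: the improvement at each stage comes solely from linearizing along the already Hölder-regular solution and exploiting that the coefficient-freezing error $a^{i\bar j}(y)-\int_0^1 F^{i\bar j}(\cdots)\,dt$ is itself $O(|x-y|^\nu)$, which is what promotes $\nu$ to $\min(\gamma_0,2\nu)$; making this rigorous rests on the uniform ellipticity of the linearized operator (and hence on the a priori estimate confining $\chi_{i\bar j}+u_{i\bar j}$ to a compact part of the cone) and on the pointwise Schauder estimate for non-divergence constant-coefficient operators. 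A secondary difficulty is uniformity over the whole list of equations and over the almost-complex and parabolic settings — in particular checking that the non-integrability terms and the $(n-1)$-plurisubharmonic substitution preserve concavity and uniform ellipticity of the linearization and do not pull the Hölder exponent below $\gamma_0$ once $u$ is known to be $C^1$.
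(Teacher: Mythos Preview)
Your bootstrap is correct and reaches $\gamma_0$, but it is organized quite differently from the paper's argument.  Both proofs start from the Tosatti--Wang--Weinkove--Yang $C^{2,\alpha}$ estimate and both freeze the linearization at a point; the divergence is in how the exponent is pushed up to $\gamma_0$.  The paper first establishes the \emph{qualitative} regularity $u\in C^{2,\gamma}$ for every $\gamma<\gamma_0$ by taking $\gamma_0$-difference quotients of the equation and applying interior $L^p$ estimates to the resulting linear equation (their Lemma~2.4 and the appendix Theorem~7.1).  With that in hand, they perform a \emph{single} Schauder step on a small ball $B_q(r)$: the $C^{2,\alpha}$ bound makes $D^2u$ nearly constant there, so the freezing error $\Psi_{D^2u(q),q}(D^2u(x),x)-\Psi_{D^2u(q),q}(D^2u(y),y)$ is $\le \varepsilon\|D^2u(x)-D^2u(y)\|+C|x-y|^{\beta_0}$ (Lemma~2.5), and choosing $r$ so that $\varepsilon C_\gamma<\tfrac12$ ($C_\gamma$ the Schauder constant) allows absorption of the $\varepsilon$-term to obtain a uniform $[D^2u]_{C^\gamma(B_q(r/2))}$ bound; finally they let $\gamma\uparrow\gamma_0$, observing that $C_\gamma\to C_{\gamma_0}<\infty$ and hence $r_\gamma\to r_{\gamma_0}>0$.

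Your route sidesteps both the $L^p$/difference-quotient step and the limiting argument: you work pointwise, observe that freezing at $y$ produces a constant-coefficient equation whose right-hand side is $O(|x-y|^{\min(\gamma_0,2\nu)})$ because the coefficient error $a^{i\bar j}(y)-\int_0^1 F^{i\bar j}(\cdots)\,dt$ and the Hessian increment are each $O(|x-y|^\nu)$, and then invoke the pointwise (Campanato/Caffarelli) Schauder estimate together with the polynomial-approximation characterization of $C^{2,\sigma}$.  The exponent thus doubles at each pass, and you land exactly on $\gamma_0$ after finitely many iterations with constants tracked at each stage.  This is a cleaner way to hit the endpoint and avoids the somewhat delicate ``take $\gamma\to\gamma_0$ in the Schauder constant'' step; on the other hand it leans on the smoothness of $F$ in the Hessian slot (to make the coefficient error genuinely $O(|x-y|^\nu)$ rather than merely $o(1)$), whereas the paper's perturbation Lemma~2.5 only needs a modulus of continuity for $F_{ij}$.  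Both arguments use essentially the same freezing idea and the same structural observations about the list of equations; the difference is purely in the analytic machinery (global Schauder $+$ $L^p$ difference quotients $+$ limit versus pointwise Schauder $+$ exponent-doubling iteration).
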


In \cite{TWWY}, Tosatti-Wang-Weinkove-Yang proved a similar result. They got $C^{2,\alpha}$ regularity and estimate of the solution $u$, where $\alpha$ depends on the same background data in Theorem \ref{Main Theorem}. However, $\alpha$ is probably very small. Our result is optimal regarding the H\"{o}lder exponent. That is, under the assumptions of Theorem \ref{Main Theorem}, for any $\tilde{\alpha}\in(\gamma_{0},1)$, there exist $\chi$ and $\psi$ such that the equations above do not admit any $C^{2,\tilde{\alpha}}$ solutions.

Next, we consider the complex Monge-Amp\`{e}re equations with conical singularities. Let $(B_{0}(1),\omega_{\beta})$ be the singular space, where $0<\beta<1$ and $\omega_{\beta}$ is the model cone metric
\begin{equation*}
\omega_{\beta}=\sqrt{-1}\frac{\beta^{2}}{|z|^{2-2\beta}}dz\wedge d\bar{z}+\sqrt{-1}\sum_{k=2}^{n}dz_{k}\wedge d\bar{z}_{k},
\end{equation*}
where $(z,z_{2},\cdot\cdot\cdot,z_{n})$ is the standard coordinates of $B_{0}(1)\subset\mathbb{C}^{n}$.

Our result is as follows.
\begin{theorem}\label{New Conical Theorem}
Let $\alpha_{0}\in(0,\min(\frac{1}{\beta}-1,1))$ be a constant. Suppose $\phi$ is a plurisubharmonic function in $C^{2,\gamma,\beta}(B_{0}(1))$ for any $\gamma\in(0,\alpha_{0})$. Suppose
\begin{equation*}
\det\phi_{i\bar{j}}=e^{f}~~and~~\frac{1}{K}\omega_{\beta}\leq\sqrt{-1}\partial\bar{\partial}\phi\leq K\omega_{\beta}~~over~B_{0}(1)\backslash\{0\}\times\mathbb{C}^{n-1}
\end{equation*}
for some $K>1$, where $f\in C^{\alpha_{0},\beta}(B_{0}(1))$. Then we have $\phi\in C^{2,\alpha_{0},\beta}(B_{0}(\frac{1}{2}))$ and the following estimate
\begin{equation*}
[\sqrt{-1}\partial\bar{\partial}\phi]_{C^{\alpha_{0},\beta}(B_{0}(\frac{1}{2}))}\leq C,
\end{equation*}
where $C$ depends only on $n$, $\alpha_{0}$, $\beta$, $K$, $\|\phi\|_{L^{\infty}(B_{0}(1))}$ and $\|f\|_{C^{\alpha_{0},\beta}(B_{0}(1))}$.
\end{theorem}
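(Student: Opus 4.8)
The plan is to mimic the perturbation scheme behind Theorem \ref{Main Theorem} (in the form used by Tosatti--Wang--Weinkove--Yang), with the Euclidean model replaced by the flat product cone and ordinary H\"older norms replaced by edge H\"older norms. First I would pass to the straightening coordinate $\zeta=z^{\beta}$ in the cone direction (keeping $z_{2},\dots,z_{n}$), which is single-valued on the model total space $\mathcal{C}_{2\pi\beta}\times\mathbb{C}^{n-1}$ (the cone of total angle $2\pi\beta$ times $\mathbb{C}^{n-1}$) and under which $\omega_{\beta}$ becomes the flat metric. In the new coordinates the equation becomes the standard one $\det\widetilde{\phi}_{a\bar{b}}=e^{\widetilde{f}}$ with $\widetilde{f}\in C^{\alpha_{0}}$ relative to the cone distance, the hypothesis $\tfrac{1}{K}\omega_{\beta}\le\sqrt{-1}\partial\bar{\partial}\phi\le K\omega_{\beta}$ turns into a uniform pinching $\tfrac{1}{K}I\le(\widetilde{\phi}_{a\bar{b}})\le KI$, the edge spaces $C^{k,\gamma,\beta}$ in $z$ correspond to the ordinary $C^{k,\gamma}$ (in cone distance) in $\zeta$, and the assumption $\phi\in C^{2,\gamma,\beta}$ for every $\gamma<\alpha_{0}$ provides the qualitative regularity needed to start the iteration. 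Away from the divisor $\{0\}\times\mathbb{C}^{n-1}$ everything is smooth and uniformly elliptic, so the classical interior Evans--Krylov and Schauder theory applies there.

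The core is an approximation lemma proved by compactness. Fix $p\in B_{0}(3/4)$ and a small dyadic scale $r$; rescale by the dilation fixing $p$ that is isotropic in the $\zeta$-variables, i.e.\ $z\mapsto r^{1/\beta}z$, $z_{k}\mapsto rz_{k}$, and normalize $\widetilde{\phi}$ by subtracting its $\omega_{\beta}$-quadratic $2$-jet at $p$ and dividing by the appropriate power of $r$. I claim the rescaled functions stay $o(1)$-close (with a definite rate) to a solution $\Psi$ of a constant-coefficient complex Monge--Amp\`ere equation on the model: on $\mathbb{C}^{n}$ when $\dist(p,\{0\}\times\mathbb{C}^{n-1})\gtrsim r$ in cone distance, and on $\mathcal{C}_{2\pi\beta}\times\mathbb{C}^{n-1}$ otherwise. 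If not, then along a bad sequence $(p_{j},r_{j})$, the uniform pinching together with interior Evans--Krylov away from the divisor and the a priori $C^{2,\gamma,\beta}$ bound give $C^{2,\gamma'}_{\mathrm{loc}}$-subconvergence to a global solution $\Psi$ of the homogeneous model equation with bounded complex Hessian and cone-growth controlled by $2+\alpha_{0}$, which is \emph{not} an $\omega_{\beta}$-quadratic polynomial; since $\widetilde{f}\in C^{\alpha_{0}}$, the rescaled right-hand side converges to a constant, so $\Psi$ indeed solves the constant-coefficient equation. A Liouville theorem on the model then forces $\Psi$ to be $\omega_{\beta}$-quadratic --- a contradiction.

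Granting the lemma, I would iterate over the dyadic scales $r=2^{-m}$: at each step the $\omega_{\beta}$-oscillation of $\sqrt{-1}\partial\bar{\partial}\phi$ on the $r$-ball about $p$ contracts by a fixed factor times $r^{\alpha_{0}}$, the geometric series sums, and one obtains an $\omega_{\beta}$-quadratic polynomial approximating $\phi$ near each $p\in B_{0}(1/2)$ with error $O(r^{2+\alpha_{0}})$, uniformly in $p$. In the edge H\"older language this is exactly $\phi\in C^{2,\alpha_{0},\beta}(B_{0}(\tfrac{1}{2}))$ together with the asserted seminorm bound, the constant depending on the data only through $n$, $\alpha_{0}$, $\beta$, $K$, $\|\phi\|_{L^{\infty}(B_{0}(1))}$ and $\|f\|_{C^{\alpha_{0},\beta}(B_{0}(1))}$.

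The hard part is the Liouville theorem on the singular model $\mathcal{C}_{2\pi\beta}\times\mathbb{C}^{n-1}$: a global solution there with bounded complex Hessian and cone-growth $\le 2+\alpha_{0}$ must be an $\omega_{\beta}$-quadratic polynomial. After differentiating the equation and applying Evans--Krylov one reduces to classifying the single-valued homogeneous solutions of a constant-coefficient uniformly elliptic equation on the model; the single-valued harmonic modes are generated by the holomorphic functions $z^{k}$ and $z_{j}$, their conjugates, and products, and the smallest one with \emph{nonzero} complex Hessian beyond the genuine quadratics is (in the original coordinates) $\mathrm{Re}(z\,\bar{z}_{j})$, whose complex Hessian in the $\omega_{\beta}$-frame has H\"older regularity exactly $\tfrac{1}{\beta}-1$ and whose cone-homogeneity is $1+\tfrac{1}{\beta}=2+(\tfrac{1}{\beta}-1)$. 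The hypothesis $\alpha_{0}<\tfrac{1}{\beta}-1$ is precisely what keeps this, and every other fractional mode, strictly above the level $2+\alpha_{0}$, so that the contraction is genuine. Two further points need care: that the blow-up limit is a bona fide solution on the \emph{smooth} part of the model cone even though the rescalings concentrate at the divisor (a removable-singularity argument across the divisor, using the uniform pinching), and that the anisotropic rescaling is compatible with the definition of the edge spaces, so that the $2$-jet, the oscillation of $\sqrt{-1}\partial\bar{\partial}\phi$, and the summed error are all measured in the same norm. Finally, the exponent is optimal: a $\phi$ whose $\zeta$-potential contains the mode $\mathrm{Re}(z\,\bar{z}_{j})$ has $\sqrt{-1}\partial\bar{\partial}\phi$ of regularity exactly $\tfrac{1}{\beta}-1$, so no $C^{2,\tilde{\alpha},\beta}$ estimate can hold with $\tilde{\alpha}\ge\tfrac{1}{\beta}-1$.
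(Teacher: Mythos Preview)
Your approach is correct in outline but takes a genuinely different route from the paper. You propose a Caffarelli-style blow-up scheme: rescale, pass to a limit on the model cone, invoke a Liouville theorem there, and iterate over dyadic scales. The paper instead argues by \emph{direct linearization}. It first proves a perturbative proposition (Proposition~\ref{Conical Proposition}): if $\sqrt{-1}\partial\bar\partial\phi$ lies in a $\delta_{C}$-neighborhood of a fixed model metric $\omega_{C}=C\omega_{\beta}\bar{C}^{T}$, then writing $F_{G}(M)=\det M-\det G\cdot\mathrm{tr}(G^{-1}M)$ and using $dF_{I}(I)=0$ one bounds $[F_{C\bar C^{T}}(\sqrt{-1}\partial\bar\partial\phi)]_{C^{\gamma,\beta}}$ by $\varepsilon[\sqrt{-1}\partial\bar\partial\phi]_{C^{\gamma,\beta}}$; feeding this into the \emph{conic Schauder estimate} for $\Delta_{C\bar C^{T}}$ and absorbing yields a bound with constant $C_{\gamma}$, and the key point is that $C_{\gamma}\to C_{\alpha_{0}}<\infty$ as $\gamma\uparrow\alpha_{0}$, so one may let $\gamma\to\alpha_{0}$ in the inequality. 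Theorem~\ref{New Conical Theorem} then follows by the covering argument of Section~2: a prior (non-sharp) $C^{2,\alpha,\beta}$ estimate forces the complex Hessian into the $\delta_{C}$-window on small balls, and one applies the proposition there. Compared with your plan, the paper's argument is shorter and uses only off-the-shelf ingredients (the edge Schauder theory of Donaldson/Brendle and the Chen--Wang estimate), completely sidestepping any Liouville theorem on $\mathcal{C}_{2\pi\beta}\times\mathbb{C}^{n-1}$. Your approach is more self-contained and more geometric---it explains \emph{why} the threshold $\tfrac{1}{\beta}-1$ appears, via the first nontrivial homogeneous mode $\mathrm{Re}(z\bar z_{j})$---but the Liouville step you flag as ``the hard part'' is genuinely nontrivial, and the compactness across the divisor needs a quantitative input (the hypothesis $\phi\in C^{2,\gamma,\beta}$ is only qualitative), so in practice one still ends up leaning on a prior estimate of Chen--Wang type or on the edge Schauder theory to close it.
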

I will explain the notations $C^{\alpha,\beta}$ and $C^{2,\alpha,\beta}$ in Section 3.

In \cite{CW1402}, Chen-Wang proved a analogous estimate. But our result is optimal regarding the H\"{o}lder exponent, which is similar to Theorem \ref{Main Theorem}.

The paper is organized as follows. In Section 2 and Section 3 we prove Theorem \ref{Main Theorem} and Theorem \ref{New Conical Theorem} respectively. In Section 4 and Section 5 we describe how to generalize our results on more general elliptic and parabolic equations respectively. In Section 6 we give an application of Theorem \ref{Main Theorem} on Calabi-Yau equation. Finally, in Section 7 we recall some basic results which are crucial to our proof of Theorem \ref{Main Theorem}.

\noindent\textbf{Acknowledgments} ~~The author would like to thank his advisor Gang Tian for constant encouragement and several useful comments on an earlier version of this paper. The author would also like to thank Yuanqi Wang for some helpful discussions about complex Monge-Amp\`{e}re equations with conical singularities. The author would also like to thank Valentino Tosatti and Jingang Xiong for many helpful conversations. The author would also like to thank CSC for supporting the author visiting Northwestern University.

\section{The proof of Theorem \ref{Main Theorem}}
In this section, we prove Theorem \ref{Main Theorem}. As we can see, Theorem 1.2 in \cite{TWWY} is very important in the proof of Theorem \ref{Main Theorem}. For the reader's convenience, we state it first.

We shall use the following notations. If $M=(m_{ij})$ is a real matrix, we write $\|M\|=(\sum_{i,j}m_{ij}^{2})^{\frac{1}{2}}$. If $M=(m_{ij})$ is a complex matrix, we write $\|M\|=(\sum_{i,j}|m_{ij}|^{2})^{\frac{1}{2}}$.

Let $Sym(2n)$ be the space of symmetric $2n\times2n$ matrices with real entries. We consider equation of the form
\begin{equation*}
F\left(S(x)+T(D^{2}u(x),x),x\right)=f(x)~~in~B_{0}(1),
\end{equation*}
where $f\in C^{\alpha_{0}}(B_{0}(1))$ and
\begin{equation*}
\begin{split}
&F:Sym(2n)\times B_{0}(1)\rightarrow \mathbb{R};\\
&S:B_{0}(1)\rightarrow Sym(2n);\\
&T:Sym(2n)\times B_{0}(1)\rightarrow Sym(2n).
\end{split}
\end{equation*}
We assume that there exists a compact convex set $\mathcal{E}\subset Sym(2n)$, positive constants $\lambda$, $\Lambda$, $K$ and $\beta_{0}\in(0,1)$ such that the following hold.
\begin{enumerate}[\textbf{H}1:]
    \item  $F$ is of class $C^{1}$ in $U\times B_{0}(1)$ where $U$ is a neighborhood of $\mathcal{E}$ and
           \begin{enumerate}[(1)]
           \item $F$ is uniformly elliptic in $\mathcal{E}$:
                 \begin{equation*}
                 \lambda|\xi|^{2}\leq F_{ij}(M,x)\xi^{i}\xi^{j}\leq\Lambda|\xi|^{2},
                 \end{equation*}
                 for all $M\in\mathcal{E}$, $x\in B_{0}(1)$ and $\xi\in\mathbb{R}^{2n}$, where $F_{ij}(M,x)=\frac{\partial F}{\partial m_{ij}}(M,x)$ and $m_{ij}$ are the components of the matrix $M$.
           \item $F$ is concave in $\mathcal{E}$:
                 \begin{equation*}
                 F(\frac{A+B}{2},x)\geq\frac{1}{2}F(A,x)+\frac{1}{2}F(B,x),
                 \end{equation*}
                 for all $A,B\in\mathcal{E}$ and $x\in B_{0}(1)$.
           \item $F$ has the following uniform H\"{o}lder bound in $x$:
                 \begin{equation*}
                 |F(N,x)-F(N,y)|\leq K|x-y|^{\beta_{0}}~~and~~|F(N,0)|\leq K,
                 \end{equation*}
                 for all $N\in\mathcal{E}$ and $x,y\in B_{0}(1)$.
           \end{enumerate}

    \item  The map $T:Sym(2n)\times B_{0}(1)\rightarrow Sym(2n)$ satisfies the following conditions:
           \begin{enumerate}[(1)]
           \item For all $x,y\in B_{0}(1)$ and all $N\in Sym(2n)$,
                 \begin{equation*}
                 \frac{\|T(N,x)-T(N,y)\|}{\|N\|+1}\leq K|x-y|^{\beta_{0}}.
                 \end{equation*}
           \item For each fixed $x\in B_{0}(1)$, the map $M\mapsto T(M,x)$ is linear on $Sym(2n)$. For convenience, we
                 assume $T_{ij}(M,x)=T_{ij,kl}(x)m_{kl}$.
           \item For all $P\geq0$ and $x\in B_{0}(1)$,
                 \begin{equation*}
                 T(P,x)\geq0~~and~~K^{-1}\|P\|\leq\|T(P,x)\|\leq K\|P\|.
                 \end{equation*}
           \end{enumerate}

    \item  $S:B_{0}(1)\rightarrow Sym(2n)$ has a uniform $C^{\beta_{0}}$ bound:
           \begin{equation*}
           \|S(x)-S(y)\|\leq K|x-y|^{\beta_{0}}~~and~~\|S(0)\|\leq K,
           \end{equation*}
           for all $x,y\in B_{0}(1)$.
\end{enumerate}

The following Theorem is Theorem 1.2 in \cite{TWWY}.
\begin{theorem}[Tosatti-Wang-Weinkove-Yang \cite{TWWY}]\label{TWWY PDE Theorem}
With the assumption above, suppose that $u\in C^{2}(B_{0}(1))$ solves
\begin{equation*}
F\left(S(x)+T(D^{2}u(x),x),x\right)=f(x)~~in~B_{0}(1),
\end{equation*}
and satisfies
\begin{equation*}
S(x)+T(D^{2}u(x),x)\in\mathcal{E},~~\forall~x\in B_{0}(1).
\end{equation*}
Then $u\in C^{2,\alpha}(B_{0}(\frac{1}{2}))$ and
\begin{equation*}
\|u\|_{C^{2,\alpha}(B_{0}(\frac{1}{2}))}\leq C,
\end{equation*}
where $\alpha$ and C depends only on $\alpha_{0}$, $K$, $n$, $\Lambda$, $\lambda$, $\beta_{0}$, $\|f\|_{C^{\alpha_{0}}(B_{0}(1))}$ and $\|u\|_{L^{\infty}(B_{0}(1))}$.
\end{theorem}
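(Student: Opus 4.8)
The plan is to treat this as an interior $C^{2,\alpha}$ estimate of Caffarelli type; the only feature beyond the classical setting is that the real Hessian $D^{2}u$ enters through the $x$-dependent linear map $T(\cdot,x)$ and the background $S(x)$, both only $C^{\beta_{0}}$ in $x$, composed with $F(\cdot,x)$ which is also only $C^{\beta_{0}}$ in $x$. I would first reduce to a bona fide concave uniformly elliptic equation in $D^{2}u$, and then run the frozen-coefficient / Evans--Krylov / ABP iteration.

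Two structural facts drive the reduction, and I would establish them first. The hypotheses already force an interior Hessian bound: since $A(x):=S(x)+T(D^{2}u(x),x)\in\mathcal{E}$ and $\mathcal{E}$ is compact, \textbf{H}2(3) and \textbf{H}3 give $K^{-1}\|D^{2}u(x)\|\le\|A(x)-S(x)\|\le\operatorname{diam}\mathcal{E}+K$, so $\|D^{2}u\|_{L^{\infty}(B_{0}(1))}\le C$. Moreover, for each fixed $x$ the operator $G(\,\cdot\,,x):P\mapsto F(S(x)+T(P,x),x)$ is, on the relevant range of Hessians, concave in $P$ (composition of the concave $F(\cdot,x)$ with an affine map, by \textbf{H}1(2) and \textbf{H}2(2)) and uniformly elliptic with constants $\lambda'=\lambda K^{-1}$, $\Lambda'=C(n)\Lambda K$: its linearization is $G_{kl}(P,x)=F_{ij}(S(x)+T(P,x),x)\,T_{ij,kl}(x)$, and testing against $\xi\otimes\xi$ one uses $T(\xi\otimes\xi,x)\ge0$ with $\|T(\xi\otimes\xi,x)\|$ comparable to $|\xi|^{2}$ (\textbf{H}2(3)) to transfer the ellipticity bounds of \textbf{H}1(1). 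So $u$ solves $G(D^{2}u,x)=f(x)$, a concave uniformly elliptic equation with $C^{\beta_{0}}$ dependence on $x$ and an a priori interior bound on $D^{2}u$.

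Next I would fix $x_{0}\in B_{0}(\tfrac12)$ and a small radius $r$, and let $w$ solve the frozen problem $G(D^{2}w,x_{0})=G(D^{2}u(x_{0}),x_{0})$ in $B_{r}(x_{0})$ with $w=u$ on $\partial B_{r}(x_{0})$; by comparison $\|D^{2}w\|_{L^{\infty}(B_{r})}\le C$, and the interior Evans--Krylov estimate gives, for quadratic polynomials $P$ and $\theta\in(0,\tfrac12]$,
\[
\inf_{P}\ \sup_{B_{\theta r}(x_{0})}|w-P|\ \le\ C\,\theta^{2+\bar\alpha}\Big(\inf_{P}\ \sup_{B_{r}(x_{0})}|w-P|+r^{2+\bar\alpha}\Big),
\]
with $\bar\alpha\in(0,1)$ and $C$ depending only on $n,\lambda',\Lambda'$. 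Writing $G(D^{2}u,x_{0})-G(D^{2}w,x_{0})=a^{kl}(x)D_{kl}(u-w)$ with $a^{kl}=\int_{0}^{1}G_{kl}\big(tD^{2}u+(1-t)D^{2}w,x_{0}\big)\,dt$ (measurable, uniformly elliptic), the function $v:=u-w$ solves $a^{kl}D_{kl}v=G(D^{2}u(x),x_{0})-G(D^{2}u(x_{0}),x_{0})$ with $v=0$ on $\partial B_{r}$; decomposing the right side as $[G(D^{2}u(x),x_{0})-G(D^{2}u(x),x)]+[f(x)-f(x_{0})]$ and bounding the first bracket using \textbf{H}1(3), \textbf{H}2(1) (with the Hessian bound) and \textbf{H}3 together with the $C^{1}$ dependence of $F$ on its matrix slot gives $\|a^{kl}D_{kl}v\|_{L^{\infty}(B_{r})}\le Cr^{\min(\alpha_{0},\beta_{0})}$, hence $\|v\|_{L^{\infty}(B_{r})}\le Cr^{2+\min(\alpha_{0},\beta_{0})}$ by ABP. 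Setting $\Phi(\rho):=\inf_{P}\sup_{B_{\rho}(x_{0})}|u-P|$ and combining these yields $\Phi(\theta r)\le C\theta^{2+\bar\alpha}\Phi(r)+Cr^{2+\min(\bar\alpha,\alpha_{0},\beta_{0})}$; choosing $\theta$ small and $\alpha:=\tfrac12\min(\bar\alpha,\alpha_{0},\beta_{0})$, the standard iteration (using $\Phi(r_{0})\le Cr_{0}^{2}\|D^{2}u\|_{L^{\infty}}$) gives $\Phi(\rho)\le C\rho^{2+\alpha}$ for $\rho\le r_{0}$, uniformly in $x_{0}$, and the Campanato characterization upgrades this to $u\in C^{2,\alpha}(B_{0}(\tfrac12))$ with the claimed bound.

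The step I expect to be the main obstacle is the structural reduction: showing that $G(\,\cdot\,,x)$ is concave and uniformly elliptic in $D^{2}u$ with constants depending only on $n,\lambda,\Lambda,K$ — so that the frozen constant-coefficient problem is solvable in a class where Evans--Krylov applies and ABP holds with uniform constants — and making sure the Hessian of $w$ stays in the range where concavity of $G$ is available; all of this hinges on the non-degeneracy of $T$ in \textbf{H}2(3). Everything after that is Caffarelli's perturbation scheme applied essentially verbatim. Note that the exponent $\alpha$ produced this way is controlled by the Evans--Krylov exponent $\bar\alpha$ and is in general far from $\min(\alpha_{0},\beta_{0})$; closing that gap is precisely what Theorem \ref{Main Theorem} does.
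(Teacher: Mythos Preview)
This theorem is not proved in the present paper at all: it is quoted verbatim from \cite{TWWY} and used as a black box to launch the bootstrap in Theorem~\ref{PDE Theorem}. So there is no in-paper argument to compare against. Your outline is, in spirit, the Caffarelli perturbation scheme that \cite{TWWY} itself runs (freeze $x$, compare with an Evans--Krylov solution, bound the error by ABP, iterate), and that is the right strategy.

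There is, however, a real gap in your reduction step. You assert $K^{-1}\|D^{2}u(x)\|\le\|T(D^{2}u(x),x)\|$ as a consequence of \textbf{H}2(3), but \textbf{H}2(3) only gives $K^{-1}\|P\|\le\|T(P,x)\|$ for \emph{nonnegative} $P$, and $D^{2}u$ has no sign. A linear map satisfying \textbf{H}2(3) need not be injective on $Sym(2n)$ (take $T(M)=(\operatorname{tr}M)I$ in $Sym(2)$), so no a~priori real-Hessian bound falls out of the hypotheses this way---and indeed in the applications $T$ is essentially the projection onto the $J$-invariant part of the Hessian, which has a large kernel. This matters because $F$ is elliptic and concave only on $\mathcal{E}$: without a Hessian bound you cannot guarantee that $S(x_{0})+T(D^{2}w,x_{0})$ for your frozen comparison $w$ lands where $G(\cdot,x_{0})$ is concave and uniformly elliptic, which is exactly the obstacle you flag but do not resolve. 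The correct fix, carried out in \cite{TWWY}, is to first \emph{extend} $F$ off $\mathcal{E}$ to a globally concave, uniformly elliptic operator on all of $Sym(2n)$ (possible because $\mathcal{E}$ is compact convex and $F$ is $C^{1}$ nearby); then $G$ is globally concave and elliptic, the frozen Dirichlet problem and Evans--Krylov apply without restriction, and your ABP/iteration goes through as written. The Hessian bound is then a \emph{consequence} of the resulting $C^{2,\alpha}$ estimate---which is precisely how the present paper invokes it immediately after citing the theorem.
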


In order to prove Theorem \ref{Main Theorem}, we introduce the following PDE theorem.
\begin{theorem}\label{PDE Theorem}
With the assumption above, suppose that $u\in C^{2}(B_{0}(1))$ solves
\begin{equation*}
F\left(S(x)+T(D^{2}u(x),x),x\right)=f(x)~~in~B_{0}(1),
\end{equation*}
and satisfies
\begin{equation*}
S(x)+T(D^{2}u(x),x)\in\mathcal{E},~~\forall~x\in B_{0}(1).
\end{equation*}
Then $u\in C^{2,\gamma_{0}}(B_{0}(\frac{1}{2}))$ and
\begin{equation*}
\|u\|_{C^{2,\gamma_{0}}(B_{0}(\frac{1}{2}))}\leq C,
\end{equation*}
where $\gamma_{0}=\min(\alpha_{0},\beta_{0})$ and C depends only on $\alpha_{0}$, $K$, $n$, $\Lambda$, $\lambda$, $\beta_{0}$, $\|f\|_{C^{\alpha_{0}}(B_{0}(1))}$, $\|u\|_{L^{\infty}(B_{0}(1))}$ and the moduli of continuity of $F_{ij}$.
\end{theorem}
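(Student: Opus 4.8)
The plan is to upgrade Theorem \ref{TWWY PDE Theorem} from the unspecified small exponent $\alpha$ to the sharp exponent $\gamma_0 = \min(\alpha_0,\beta_0)$ by a bootstrap argument. The point is that Theorem \ref{TWWY PDE Theorem} already provides, as a black box, that $u \in C^{2,\alpha}(B_0(1/2))$ for \emph{some} $\alpha>0$ with an a priori bound; in particular $D^2u$ is now known to be H\"older continuous, so the "linearized at a point" coefficients of the equation inherit a definite modulus of continuity. Concretely, fix a point $x_0$; since $S(x_0) + T(D^2u(x_0),x_0) \in \mathcal E$ and $F$ is $C^1$ near $\mathcal E$, we may freeze coefficients there and write the equation as a perturbation of the constant-coefficient, uniformly elliptic, concave operator $N \mapsto F(S(x_0)+T(N,x_0),x_0)$. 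What we want is a Schauder-type estimate at scale $r$ around $x_0$ showing that $D^2 u$ is close, in $L^2$-average on $B_r(x_0)$, to its value at $x_0$ with error $O(r^{\gamma_0})$. Iterating this over dyadic scales gives $u \in C^{2,\gamma_0}$ at $x_0$, uniformly in $x_0$.

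The key steps, in order, are as follows. First, having $u \in C^{2,\alpha}$, rewrite the equation on a small ball $B_r = B_r(x_0)$ as
\begin{equation*}
a^{ij}(x)\, \partial_{ij}\big(u(x) - u(x_0) - Du(x_0)\cdot(x-x_0) - \tfrac12 (x-x_0)^\top D^2u(x_0)(x-x_0)\big) = g(x),
\end{equation*}
where $a^{ij}(x) = \int_0^1 F_{pq}(\cdots)\,T_{pq,ij}(x)\,dt$ are uniformly elliptic, and $g$ collects: the genuine right-hand side increment $f(x)-f(x_0)$ (bounded by $Cr^{\alpha_0}$), the increment of $S$ (bounded by $Cr^{\beta_0}$), the $x$-dependence of $T$ and of $F$ (bounded by $Cr^{\beta_0}(1+\|D^2u\|)$), and the error from freezing $F_{pq}$, which is $O\big(\text{osc}_{B_r} D^2u\big)$ times $\|D^2u\|$ — here the modulus of continuity of $F_{ij}$ enters, exactly as flagged in the statement. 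Second, apply the interior $C^{1,\gamma}$ estimate for solutions of $a^{ij}\partial_{ij} w = g$ with merely measurable elliptic $a^{ij}$ (Caffarelli–Krylov–Safonov / the $W^{2,p}$–$C^{1,\gamma}$ theory quoted in Section 7) to the first derivatives of $w := u - (\text{its 2-jet at }x_0)$, together with the Evans–Krylov concavity estimate to control the $C^{\alpha}$ oscillation of $D^2u$ on the half-ball in terms of its oscillation on the full ball. Third, combine these into a decay inequality of the form
\begin{equation*}
\text{osc}_{B_{\theta r}(x_0)} D^2 u \;\leq\; \tfrac12\, \text{osc}_{B_r(x_0)} D^2u \;+\; C r^{\gamma_0},
\end{equation*}
valid once $r$ is small (so that $\text{osc}_{B_r} D^2u$ is small enough to make the frozen-coefficient error subordinate). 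A standard iteration lemma then yields $\text{osc}_{B_r(x_0)} D^2u \le C r^{\gamma_0}$ for all small $r$, with $C$ independent of $x_0$; a covering argument passes from $B_0(1/2)$ to the full interior estimate.

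The main obstacle is the frozen-coefficient error term: unlike in the classical Schauder setting where the coefficients are assumed H\"older, here the modulus of continuity of $a^{ij}$ along the solution is only as good as that of $D^2u$, which is precisely the quantity we are trying to estimate — so the argument is genuinely a bootstrap, and one must be careful that the error is \emph{reabsorbed} rather than merely \emph{bounded}. This is why the constant $C$ in the conclusion is allowed to depend on the moduli of continuity of $F_{ij}$ (not just on $\lambda,\Lambda$): one needs, for any $\varepsilon>0$, a radius $r_0(\varepsilon)$ below which $\|a^{ij}(x)-a^{ij}(x_0)\| \le \varepsilon$ on $B_{r_0}(x_0)$, and this $r_0$ depends on both the $C^{2,\alpha}$ bound from Theorem \ref{TWWY PDE Theorem} and the modulus of $F_{ij}$. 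Once $\varepsilon$ is chosen small relative to the ellipticity ratio and the geometric decay factor $\theta$, the perturbation is absorbed and the iteration closes. The only other subtlety is that $\gamma_0$ is genuinely the best one can hope for because $g$ contains a term of size $r^{\beta_0}$ coming from $S$ and $\chi$ and a term of size $r^{\alpha_0}$ coming from $f$ and $\psi$; no argument can do better than the worse of the two, which matches the sharpness claim made after Theorem \ref{Main Theorem}.
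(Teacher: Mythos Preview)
Your overall strategy---bootstrap from Theorem \ref{TWWY PDE Theorem}, freeze the coefficients at a point, treat the equation as a perturbation of a constant-coefficient linear one, and absorb the error using the modulus of continuity of $F_{ij}$---is exactly the paper's idea. Where you diverge is in the implementation. The paper does \emph{not} run an oscillation-decay iteration. Instead, after establishing (via difference quotients and $L^p$ estimates, Lemma \ref{regularity proposition}) the qualitative fact that $u\in C^{2,\gamma}$ for every $\gamma<\gamma_0$, it freezes at the center $q$ of a small ball, observes that the frozen linear operator $a_{ij}(D^2u(q),q)\partial_{ij}$ is uniformly elliptic with \emph{constant} coefficients (Lemma \ref{uniformly elliptic}), and applies the interior Schauder estimate directly in the weighted seminorm $[\,\cdot\,]^{(2)}_{C^\gamma(B_q(r))}$. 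The matrix lemma (Lemma \ref{matrix lemma}) converts the nonlinear error into $\varepsilon[D^2u]^{(2)}_{C^\gamma}+Cr^{\beta_0}$, which is absorbed once $\varepsilon$ is small; then one lets $\gamma\uparrow\gamma_0$. No iteration over dyadic scales is needed.

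Your step 3 is the weak point. There is no standard ``interior $C^{1,\gamma}$ estimate for $a^{ij}\partial_{ij}w=g$ with merely measurable $a^{ij}$'' that, applied to first derivatives of $w$, yields control of $D^2w$: differentiating the equation introduces derivatives of $a^{ij}$ you do not have, and Krylov--Safonov only gives $C^\alpha$ for a small, not chosen, $\alpha$. Likewise, invoking Evans--Krylov here is circular, since TWWY has already given you that. The decay inequality you wrote down \emph{can} be obtained, but by a different mechanism (compare $u$ on $B_r$ to the solution of the frozen nonlinear equation and use ABP for the difference plus interior estimates for the frozen solution---this is essentially Caffarelli's argument). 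The paper's route via the constant-coefficient Schauder estimate is shorter and avoids this difficulty entirely; the price it pays is the extra preliminary Lemma \ref{regularity proposition} to ensure the weighted $C^\gamma$ seminorms are finite before absorption, a step your outline does not mention but which any approach will need in some form.
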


As we can see, Theorem \ref{PDE Theorem} and Theorem \ref{TWWY PDE Theorem} are similar. Indeed, their assumptions are the same. However, our results are different. In Theorem \ref{TWWY PDE Theorem}, Tosatti-Wang-Weinkove-Yang \cite{TWWY} proved the existence of $\alpha$. In our paper, we obtain the $C^{2,\gamma_{0}}$ regularity and estimate of the solution, which is optimal regarding the H\"{o}lder exponent.

In the Section 2 of \cite{TWWY}, Tosatti-Wang-Weinkove-Yang described how to apply Theorem \ref{TWWY PDE Theorem} to obtain the $C^{2,\alpha}$ regularity and estimate of the solution. Since Theorem \ref{PDE Theorem} and Theorem \ref{TWWY PDE Theorem} have the same assumptions, we can use Theorem \ref{PDE Theorem} to prove Theorem \ref{Main Theorem} by the same argument. Compared with Theorem \ref{TWWY PDE Theorem}, the constant $C$ in Theorem \ref{PDE Theorem} need to depend on the moduli of continuity of $F_{ij}$. But we still can use Theorem \ref{PDE Theorem} to prove Theorem \ref{Main Theorem} by the same method in \cite{TWWY}. Because the moduli of continuity of $F_{ij}$ depends only on $n$, $(M,J,\omega)$, $K$ and $\|\psi\|_{L^{\infty},M}$ in all equations we refer in Section 1 (see Section 2 of \cite{TWWY}). Therefore, it suffices to prove Theorem \ref{PDE Theorem}.

In fact, Theorem \ref{TWWY PDE Theorem} plays an important role in our argument. In order to control second derivatives of $u$, we need to use Theorem \ref{TWWY PDE Theorem} to get $C^{2,\alpha}$ estimate first (maybe $\alpha$ is very small), then we can lift $\alpha$ to $\gamma_{0}=\min(\alpha_{0},\beta_{0})$.

By Theorem \ref{TWWY PDE Theorem}, we have the estimate of $D^{2}u$, which implies
\begin{equation*}
-LI_{2n}\leq D^{2}u(x)\leq LI_{2n}~~\forall x\in B_{0}(\frac{3}{4}),
\end{equation*}
where $L$ depends only on $\alpha_{0}$, $K$, $n$, $\Lambda$, $\lambda$, $\beta_{0}$, $\|f\|_{C^{\alpha_{0}}(B_{0}(1))}$ and $\|u\|_{L^{\infty}(B_{0}(1))}$. Let $P[-L,L]$ be the space of matrices with eigenvalues lying in the interval $[-L,L]$. For any $H\in P[-L,L]\cap\mathcal{E}$ and $q\in B_{0}(1)$, we define
\begin{equation*}
\Psi_{H,q}(M,x)=F\left(S(x)+T(M,x),x\right)-a_{kl}(H,q)m_{kl},
\end{equation*}
where $a_{kl}(H,q)=F_{ij}\left(S(q)+T(H,q),q\right)T_{ij,kl}(q)$, then we have the following lemmas.

\begin{lemma}\label{uniformly elliptic}
Under the assumptions of Theorem \ref{PDE Theorem}, we have
\begin{equation*}
K^{-1}\lambda I_{2n}\leq\left(a_{ij}(H,q)\right)\leq \sqrt{2n}K\Lambda I_{2n},
\end{equation*}
for all $H\in\mathcal{E}$ and $q\in B_{0}(1)$.
\end{lemma}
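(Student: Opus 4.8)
The plan is to read $(a_{ij}(H,q))$ as a quadratic form and reduce the whole statement to a one-line trace computation, feeding in \textbf{H}1(1) for $F$ and \textbf{H}2(3) for $T$. First I would note that, since $M\mapsto T(M,q)$ is linear with $T_{ij}(M,q)=T_{ij,kl}(q)m_{kl}$ and $m_{kl}=m_{lk}$ on $Sym(2n)$, one may symmetrize $T_{ij,kl}(q)$ in the indices $k,l$ without changing the map; hence $(a_{kl}(H,q))$ is a genuine symmetric $2n\times 2n$ matrix and it suffices to estimate $\sum_{k,l}a_{kl}(H,q)\eta^{k}\eta^{l}$ for every unit vector $\eta\in\mathbb{R}^{2n}$.

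Fix such an $\eta$, write $b^{ij}=F_{ij}\big(S(q)+T(H,q),q\big)$, and let $\eta\otimes\eta\in Sym(2n)$ denote the rank-one matrix with entries $\eta^{k}\eta^{l}$. By the definition of $a_{kl}$ and the linearity of $T(\cdot,q)$,
\[
\sum_{k,l}a_{kl}(H,q)\eta^{k}\eta^{l}=\sum_{i,j}b^{ij}\,T_{ij,kl}(q)\eta^{k}\eta^{l}=\sum_{i,j}b^{ij}\big(T(\eta\otimes\eta,q)\big)_{ij}=\mathrm{tr}\big(b\,T(\eta\otimes\eta,q)\big).
\]
Since $\eta\otimes\eta\geq 0$, hypothesis \textbf{H}2(3) gives $T(\eta\otimes\eta,q)\geq 0$ together with $K^{-1}\|\eta\otimes\eta\|\leq\|T(\eta\otimes\eta,q)\|\leq K\|\eta\otimes\eta\|$; as $\|\eta\otimes\eta\|=|\eta|^{2}=1$, this reads $K^{-1}\leq\|T(\eta\otimes\eta,q)\|\leq K$. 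For a positive semidefinite $2n\times 2n$ matrix the eigenvalues are nonnegative, so its trace lies between its Frobenius norm and $\sqrt{2n}$ times that norm; hence $K^{-1}\leq\mathrm{tr}\,T(\eta\otimes\eta,q)\leq\sqrt{2n}\,K$.

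It then remains to transmit the ellipticity of $F$ through the trace. Since the matrix $S(q)+T(H,q)$ at which $F_{ij}$ is evaluated belongs to $\mathcal{E}$ (this is the content of the hypothesis $H\in\mathcal{E}$), \textbf{H}1(1) gives $\lambda I_{2n}\leq (b^{ij})\leq\Lambda I_{2n}$; because $T(\eta\otimes\eta,q)$ is positive semidefinite, the trace of its product with each of the positive semidefinite matrices $b-\lambda I_{2n}$ and $\Lambda I_{2n}-b$ is nonnegative, i.e.
\[
\lambda\,\mathrm{tr}\,T(\eta\otimes\eta,q)\leq\mathrm{tr}\big(b\,T(\eta\otimes\eta,q)\big)\leq\Lambda\,\mathrm{tr}\,T(\eta\otimes\eta,q).
\]
Combining the last two displays yields $K^{-1}\lambda\leq\sum_{k,l}a_{kl}(H,q)\eta^{k}\eta^{l}\leq\sqrt{2n}\,K\Lambda$ for every unit $\eta$, which is exactly the asserted matrix inequality. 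I do not expect a genuine obstacle here: the only delicate points are the fact that $\mathrm{tr}(AB)\geq 0$ for positive semidefinite $A$ and $B$ — the device that carries the bounds on $(b^{ij})$ past the trace — and the mismatch $\mathrm{tr}\,B\geq\|B\|$ versus $\mathrm{tr}\,B\leq\sqrt{2n}\,\|B\|$, which is precisely what puts a dimensional factor on the upper bound but not on the lower one.
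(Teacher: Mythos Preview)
Your argument is correct and is essentially the paper's own proof, specialized to rank-one $B=\eta\otimes\eta$ (which is all that is needed for the quadratic-form inequality): the paper takes an arbitrary $B\geq 0$, uses \textbf{H}2(3) to get $T(B,q)\geq 0$ with $K^{-1}\|B\|\leq\|T(B,q)\|\leq K\|B\|$, combines this with $\lambda I\leq(F_{ij})\leq\Lambda I$ via the same trace inequalities you spell out, and concludes $K^{-1}\lambda\|B\|\leq a_{kl}(H,q)b_{kl}\leq\sqrt{2n}K\Lambda\|B\|$. Your parenthetical ``this is the content of the hypothesis $H\in\mathcal{E}$'' is not literally what $H\in\mathcal{E}$ says, but the paper makes the identical tacit identification (the lemma is only ever applied with $H=D^{2}u(q)$, for which $S(q)+T(H,q)\in\mathcal{E}$ by assumption), so this is a wording issue in the lemma statement rather than a gap in your proof.
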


\begin{proof}
For any $B=(b_{ij})\geq0$, we have $T(B,q)\geq0$ by \textbf{H}2 (3). By uniformly ellipicity, we obtain
\begin{equation*}
\lambda I_{2n}\leq F_{ij}\left(S(H)+T(H,q),q\right)\leq \Lambda I_{2n}.
\end{equation*}
It then follows that
\begin{equation*}
\lambda\|T(B,q)\|\leq F_{ij}\left(S(q)+T(H,q),q\right)T_{ij}(B,q) \leq\sqrt{2n}\Lambda\|T(B,q)\|,
\end{equation*}
which implies
\begin{equation*}
K^{-1}\lambda\|B\|\leq a_{kl}(H,q)b_{kl} \leq\sqrt{2n}K\Lambda\|B\|,
\end{equation*}
where we use \textbf{H}2 (3) again. Since $B$ is arbitrary, we complete the proof.
\end{proof}

\begin{lemma}\label{regularity proposition}
Under the assumptions of Theorem \ref{PDE Theorem}, we have $u\in C^{2,\gamma}(B_{0}(\frac{3}{4}))$ for any $\gamma\in(0,\gamma_{0})$.
\end{lemma}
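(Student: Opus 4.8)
The plan is to bootstrap the $C^{2,\alpha}$ estimate from Theorem~\ref{TWWY PDE Theorem} up to arbitrary $\gamma < \gamma_0$ by a perturbation-and-freezing argument in the spirit of the Caffarelli–Schauder approach, using the auxiliary operators $\Psi_{H,q}$ introduced above. First I would recall that, by Theorem~\ref{TWWY PDE Theorem}, we already know $u \in C^{2,\alpha}(B_0(\tfrac34))$ for some (possibly tiny) $\alpha$, so $D^2 u$ takes values in the compact set $P[-L,L]\cap\mathcal{E}$. The idea is then to show that, at each point $q \in B_0(\tfrac34)$, the second derivative $D^2 u$ is Hölder continuous with any exponent $\gamma < \gamma_0$ by comparing $u$ on small balls $B_q(r)$ with the solution $v$ of the constant-coefficient, homogeneous equation obtained by freezing the linearized operator at $q$: namely $a_{kl}(D^2u(q),q)\,D_{kl}v = \text{const}$, with $v = u$ on $\partial B_q(r)$. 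By Lemma~\ref{uniformly elliptic}, this frozen operator is uniformly elliptic with constants independent of $q$, so interior $C^{2,1}$ (indeed $C^\infty$) estimates for $v$ give the needed decay $\|D^2 v\|_{C^{0}(B_q(\theta r))} \le C$ together with the improvement-of-flatness quantity $\sup_{B_q(\theta r)} |D^2 v - D^2 v(q)| \le C\theta\, r^{-1}\cdot(\text{oscillation data})$.

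The next step is to estimate $w := u - v$. Here $w$ solves an equation whose right-hand side measures the error committed in freezing: by concavity (\textbf{H}1(2)) one has $\Psi_{H,q}(D^2u,x) \le f(x)$ pointwise (the linearization dominates a concave function), while the oscillation of the coefficients of $F$, of $S$, and of $T$ in the $x$-variable — all controlled by $|x - q|^{\beta_0}$ via \textbf{H}1(3), \textbf{H}2(1), \textbf{H}3 — together with the $C^{\alpha_0}$ oscillation of $f$, contribute an error of order $r^{\gamma_0}$ on $B_q(r)$ (after scaling). One also needs that the coefficient $a_{kl}(D^2u(x),x)$ is close to $a_{kl}(D^2u(q),q)$ for $x$ near $q$; this is precisely where the modulus of continuity of $F_{ij}$ enters (hence its appearance in the statement of Theorem~\ref{PDE Theorem}), combined with the already-known $C^{0,\alpha}$ continuity of $D^2u$. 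Applying the Alexandrov–Bakelman–Pucci / maximum-principle estimate to $w$ on $B_q(r)$ then yields $\sup_{B_q(r)} |w| \le C r^{2+\gamma_0}$ (up to the $C^{2,\alpha}$-norm of $u$ on the larger ball, which is already bounded).

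Combining the two previous steps in the standard way gives a decay estimate of the form
\begin{equation*}
\inf_{\substack{P \text{ quadratic polynomial}}} \sup_{B_q(\theta r)} |u - P| \le \theta^{2+\gamma}\, \Big( \sup_{B_q(r)} |u - P_{q,r}| \Big) + C\, r^{2+\gamma_0},
\end{equation*}
valid for a fixed small $\theta = \theta(\gamma) \in (0,1)$ once $\gamma < \gamma_0$, where $P_{q,r}$ is the near-optimal quadratic at scale $r$. Iterating this inequality over the geometric sequence of radii $\theta^k r_0$ — a routine summation, since $\gamma < \gamma_0$ makes the geometric series converge — produces a quadratic polynomial $P_q$ with $\sup_{B_q(\rho)} |u - P_q| \le C \rho^{2+\gamma}$ for all small $\rho$; uniqueness of Taylor expansions identifies the Hessian of $P_q$ with $D^2u(q)$, and the uniformity of all constants in $q$ gives $[D^2 u]_{C^{0,\gamma}(B_0(\tfrac34))} \le C$, i.e. $u \in C^{2,\gamma}(B_0(\tfrac34))$ as claimed. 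The main obstacle I expect is the bookkeeping in the error term of $w$: one must check carefully that every source of $x$-dependence in $F\bigl(S(x)+T(D^2u(x),x),x\bigr) - a_{kl}(H,q)m_{kl}$ contributes at worst $O(r^{\gamma_0})$ after rescaling to unit scale, and in particular that the term coming from $\bigl(a_{kl}(D^2u(x),x) - a_{kl}(D^2u(q),q)\bigr) D_{kl}u(x)$ — which a priori only has the small exponent $\alpha$ from the first-stage estimate — can still be absorbed; the resolution is that this term enters the right-hand side multiplied by a factor that vanishes as $r \to 0$ uniformly (via the modulus of continuity of $F_{ij}$), so it is $o(1)$ rather than a fixed fraction, and hence does not damage the contraction constant $\theta^{2+\gamma}$.
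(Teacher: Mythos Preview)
Your strategy is genuinely different from the paper's. For this lemma the paper does not run a freeze-and-compare iteration at all: it forms the fractional difference quotient $D_e^{h,\gamma_0}u(x)=h^{-\gamma_0}(u(x+he)-u(x))$, subtracts the equation at $x+he$ from the one at $x$, and observes that $D_e^{h,\gamma_0}u$ solves a linear uniformly elliptic equation whose right-hand side is bounded in $L^\infty$ uniformly in $h,e$ (this is where \textbf{H}1(3), \textbf{H}2(1), \textbf{H}3 and $f\in C^{\alpha_0}$ are spent). Linear $L^p$ estimates then give a uniform $W^{2,p}$ bound on $D_e^{h,\gamma_0}u$, and a Campanato-type characterization (Theorem~\ref{Lp regular theorem} in the appendix) converts this into $D^2u\in C^{0,\gamma}$ for every $\gamma<\gamma_0$. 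No comparison function, no ABP, no iteration. In fact the freeze-and-absorb idea you describe is precisely what the paper carries out \emph{afterwards}, in Lemma~\ref{local estimate}, and there it explicitly invokes the present lemma to guarantee that $[D^2u]_{C^\gamma(B_q(r))}^{(2)}<\infty$ before moving the $\varepsilon[D^2u]_{C^\gamma}^{(2)}$ term to the left-hand side.

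Your version has a real gap in the $w$-estimate. With $v$ solving the frozen \emph{linear} problem $a_{kl}(D^2u(q),q)\,v_{kl}=\mathrm{const}$, one gets $a_{kl}w_{kl}=g$ with $|g|\le Cr^{\gamma_0}+\varepsilon\,\psi(r)$, where $\psi(r)=\mathrm{osc}_{B_q(r)}D^2u$ and $\varepsilon\to 0$ via the modulus of continuity of $F_{ij}$. ABP then yields $\sup|w|\le Cr^{2+\gamma_0}+C\varepsilon\, r^2\psi(r)$, and this second piece cannot be absorbed into the contraction $\theta^{2+\gamma}\phi(r)$ as you suggest: one only has $\phi(r)\le Cr^2\psi(r)$, not the reverse, so $r^2\psi(r)$ is not controlled by the iterated quantity $\phi(r)=\inf_P\sup_{B_q(r)}|u-P|$. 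Falling back on the a priori bound $\psi(r)\le Cr^{\alpha}$ makes the additive error $C\varepsilon\, r^{2+\alpha}$ with $\alpha<\gamma$; since $\varepsilon$ carries no polynomial rate in $r$, the standard iteration lemma returns only $\phi(\rho)\le C\rho^{2+\alpha}$ --- no improvement over what Theorem~\ref{TWWY PDE Theorem} already gave. (Incidentally, your concavity remark is off: by definition $\Psi_{H,q}(D^2u(x),x)=f(x)-a_{kl}(H,q)u_{kl}(x)$, an equality, and concavity plays no role in this bootstrap step.) If you want to keep a comparison argument, the clean repair is to let $v$ solve the frozen \emph{nonlinear} equation $F(S(q)+T(D^2v,q),q)=f(q)$; then the right-hand side of the equation for $w$ involves only the $x$-variation of $F,S,T,f$, which is genuinely $O(r^{\gamma_0})$, and $\psi(r)$ never appears.
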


\begin{proof}
For any unit vector $e$ and $h\in(0,\frac{1}{2})$, we have
\begin{equation}\label{regularity proposition equation1}
F\left(S(x+he)+T(D^{2}u(x+he),x+he),x+he\right)=f(x+he)~~in~B_{0}(\frac{1}{3}).
\end{equation}
It is clear that
\begin{equation}\label{regularity proposition equation2}
F\left(S(x)+T(D^{2}u(x),x),x\right)=f(x)~~in~B_{0}(\frac{1}{3}).
\end{equation}
Combining (\ref{regularity proposition equation1}) and (\ref{regularity proposition equation2}), we obtain
\begin{equation}\label{regularity proposition equation3}
\begin{split}
 &\tilde{a}_{ij}(x,h,e)T_{ij,kl}(x+he)\frac{u_{kl}(x+he)-u_{kl}(x)}{h^{\gamma_{0}}}\\
=&-\frac{F\left(S(x)+T(D^{2}u(x),x),x+he\right)-F\left(S(x)+T(D^{2}u(x),x),x\right)}{h^{\gamma_{0}}}\\
&+\frac{f(x+he)-f(x)}{h^{\gamma_{0}}}-\tilde{a}_{ij}(x,h,e)\frac{S_{ij}(x+he)-S_{ij}(x)}{h^{\gamma_{0}}}\\
&-\tilde{a}_{ij}(x,h,e)\frac{T_{ij}(D^{2}u(x),x+he)-T_{ij}(D^{2}u(x),x)}{h^{\gamma_{0}}},
\end{split}
\end{equation}
where
\begin{equation*}
\begin{split}
\tilde{a}_{ij}(x,h,e)=&\int_{0}^{1}F_{ij}\bigg(tS(x+he)+(1-t)S(x)+tT\left(D^{2}u(x+he),x+he\right)\\
                      &+(1-t)T\left(D^{2}u(x),x\right),x+he\bigg)dt.
\end{split}
\end{equation*}
We denote by $\tilde{\tilde{f}}(x,h,e)$ the right hand side in (\ref{regularity proposition equation3}). It then follows that
\begin{equation}\label{regularity proposition equation4}
\tilde{\tilde{a}}_{kl}(x,h,e)(D_{e}^{h,\gamma_{0}}u)_{kl}(x)=\tilde{\tilde{f}}(x,h,e)~~in~B_{0}(\frac{1}{3}),
\end{equation}
where
\begin{equation*}
\tilde{\tilde{a}}_{kl}(x,h,e)=\tilde{a}_{ij}(x,h,e)T_{ij,kl}(x+he)~~and~~D_{e}^{h,\gamma_{0}}u(x)=\frac{u(x+he)-u(x)}{h^{\gamma_{0}}}.
\end{equation*}
By the assumptions of $F$, $S$ and $T$, we find that (\ref{regularity proposition equation4}) is a uniformly elliptic partial differential equation. By $L^{p}$ estimates, we have
\begin{equation*}
\|D_{e}^{h,\gamma_{0}}u\|_{W^{2,p}(B_{0}(\frac{1}{4}))}\leq C,
\end{equation*}
for any $1<p<\infty$, where $C$ depends only on $n$, $p$, $\|D_{e}^{h,\gamma_{0}}u\|_{L^{p}(B_{0}(\frac{1}{3}))}$, $\|\tilde{\tilde{f}}\|_{L^{p}(B_{0}(\frac{1}{3}))}$, the positive lower and upper bounds on the eigenvalues of $(\tilde{\tilde{a}}_{kl})$ and the moduli of continuity of the coefficients $\tilde{\tilde{a}}_{kl}$. And we can check that all these data are independent of $h$ and $e$. Thus, the constant $C$ is independent of $h$ and $e$ ($C$ maybe depends on $u$). By Theorem \ref{Lp regular theorem}, we obtain $u\in C^{2,\gamma}(B_{0}(\frac{1}{6}))$ for any $\gamma\in(0,\gamma_{0})$. Then, the desired result follows from a simple covering argument.
\end{proof}

\begin{lemma}\label{matrix lemma}
For any $\varepsilon>0$, there exists a positive number $\delta$ depending only on $\varepsilon$, $n$, $\Lambda$, $K$, $L$, $\beta_{0}$ and the moduli of continuity of $F_{ij}$, with the following property. For any $H, N, \tilde{N}\in P[-L,L]\cap\mathcal{E}$ and $q, x, y\in B_{0}(\frac{3}{4})$, if
\begin{equation*}
\|N-H\|+\|\tilde{N}-H\|+|x-q|+|y-q|<\delta,
\end{equation*}
then we have
\begin{equation*}
|\Psi_{H,q}(N,x)-\Psi_{H,q}(\tilde{N},y)|\leq\varepsilon\|N-\tilde{N}\|+K(1+4n\Lambda+2n\Lambda L)|x-y|^{\beta_{0}}.
\end{equation*}
\end{lemma}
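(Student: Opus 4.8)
The plan is to estimate the difference $\Psi_{H,q}(N,x) - \Psi_{H,q}(\tilde N,y)$ by splitting it into two natural pieces: a ``frozen-coefficient'' piece that compares the linearization of $F$ at the base point $(H,q)$ with the true nonlinear operator, and a ``spatial'' piece coming from the explicit $x$-dependence of $S$, $T$ and $F$. Concretely, write
\begin{equation*}
\Psi_{H,q}(N,x) - \Psi_{H,q}(\tilde N,y) = \Big[\Psi_{H,q}(N,x) - \Psi_{H,q}(\tilde N,x)\Big] + \Big[\Psi_{H,q}(\tilde N,x) - \Psi_{H,q}(\tilde N,y)\Big].
\end{equation*}
For the first bracket, freeze the spatial variable at $x$ and use the definition of $\Psi_{H,q}$ together with the mean value theorem in the matrix variable: $F(S(x)+T(N,x),x) - F(S(x)+T(\tilde N,x),x) = \int_0^1 F_{ij}(\,\cdot\,)\,dt\cdot T_{ij,kl}(x)(n_{kl}-\tilde n_{kl})$, where the argument runs along the segment joining $S(x)+T(N,x)$ and $S(x)+T(\tilde N,x)$. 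Subtracting $a_{kl}(H,q)(n_{kl}-\tilde n_{kl})$ and recalling $a_{kl}(H,q) = F_{ij}(S(q)+T(H,q),q)T_{ij,kl}(q)$, the first bracket becomes a sum of terms of the form $\big[F_{ij}(\text{point near }S(q)+T(H,q)) - F_{ij}(S(q)+T(H,q))\big]T_{ij,kl}(\cdot)(n_{kl}-\tilde n_{kl})$ plus a term $F_{ij}(S(q)+T(H,q),q)\big[T_{ij,kl}(x)-T_{ij,kl}(q)\big](n_{kl}-\tilde n_{kl})$; this last one I would absorb by noting $T_{ij,kl}$ is continuous (indeed the smallness of $|x-q|$ can be folded into $\delta$), although one has to be a little careful since $T$ is only assumed $C^{\beta_0}$ in $x$ — but by \textbf{H}2(1) the modulus $\|T(N,x)-T(N,q)\|/(\|N\|+1)\le K|x-q|^{\beta_0}$ can be made as small as we like by shrinking $\delta$, and since $N,\tilde N$ have bounded norm this term is $\le$ (small)$\cdot\|N-\tilde N\|$.

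The crux is then the first type of term: we need $\|F_{ij}(P,x) - F_{ij}(S(q)+T(H,q),q)\|$ to be small whenever $P$ lies on a segment close to $S(q)+T(H,q)$ and $x$ is close to $q$. The matrix argument along the segment is within $\|S(x)-S(q)\| + \|T(N,x)-T(H,q)\|$-ish distance of $S(q)+T(H,q)$; using \textbf{H}3, \textbf{H}2(1), \textbf{H}2(2) and $\|N-H\|,\|\tilde N-H\|<\delta$ together with the eigenvalue bounds $N,\tilde N,H\in P[-L,L]$, this distance is $\le C(n,K,L)(\delta + \delta^{\beta_0})$, hence small. So the desired smallness reduces exactly to uniform continuity of the map $(M,x)\mapsto F_{ij}(M,x)$ on the compact set (segment neighborhood of $\mathcal E\cap P[-L,L]$) $\times \overline{B_0(3/4)}$ — and this is precisely why the constant $\delta$ is allowed to depend on the moduli of continuity of $F_{ij}$. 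Given $\varepsilon>0$, choose $\delta$ so small that $\|F_{ij}(P,x)-F_{ij}(S(q)+T(H,q),q)\| < \varepsilon/(C n K)$ on that set, where $C$ collects the bound $\|T_{ij,kl}\|\le K$-type constants; then summing over $i,j,k,l$ and using $|n_{kl}-\tilde n_{kl}|\le \|N-\tilde N\|$ gives that the first bracket is $\le \varepsilon\|N-\tilde N\|$.

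For the second bracket, $\tilde N$ and the base point $(H,q)$ are fixed, so
\begin{equation*}
\Psi_{H,q}(\tilde N,x) - \Psi_{H,q}(\tilde N,y) = F(S(x)+T(\tilde N,x),x) - F(S(y)+T(\tilde N,y),y),
\end{equation*}
the linear term $a_{kl}(H,q)\tilde n_{kl}$ cancelling exactly. I would bound this by inserting the intermediate point $F(S(y)+T(\tilde N,y),x)$: the difference $F(\cdot,x)-F(\cdot,y)$ is $\le K|x-y|^{\beta_0}$ by \textbf{H}1(3), and $F(S(x)+T(\tilde N,x),x) - F(S(y)+T(\tilde N,y),x)$ is controlled by $\Lambda$ (from \textbf{H}1(1), since $\Lambda$ bounds the operator norm of the gradient $F_{ij}$) times $\|S(x)-S(y)\| + \|T(\tilde N,x)-T(\tilde N,y)\|$. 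By \textbf{H}3 this is $\le \Lambda K|x-y|^{\beta_0}$, and by \textbf{H}2(1) the $T$-part is $\le \Lambda K(\|\tilde N\|+1)|x-y|^{\beta_0} \le \Lambda K(2n L + 1)|x-y|^{\beta_0}$ using $\|\tilde N\| \le \sqrt{2n}\,\|\tilde N\|_{op}\le \sqrt{2n}\,L$ (I will be a touch generous with constants here to land on the stated $K(1+4n\Lambda+2n\Lambda L)$). Collecting: the second bracket is $\le K(1+4n\Lambda+2n\Lambda L)|x-y|^{\beta_0}$, which together with the first bracket's bound $\varepsilon\|N-\tilde N\|$ is exactly the claimed inequality. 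The main obstacle, and the only genuinely delicate point, is precisely the bookkeeping in the first bracket — making sure that every occurrence of $F_{ij}$ evaluated at a ``wrong'' matrix or a ``wrong'' point can be matched against $F_{ij}(S(q)+T(H,q),q)$ with an error controlled by the modulus of continuity, and that the remaining $T_{ij,kl}(x)$-vs-$T_{ij,kl}(q)$ discrepancy is handled by shrinking $\delta$ rather than by an $|x-y|^{\beta_0}$ term.
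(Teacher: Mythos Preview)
Your proposal is correct and follows essentially the same approach as the paper: split by the triangle inequality into a ``matrix-variation'' piece and a ``spatial-variation'' piece, handle the spatial piece via \textbf{H}1(1),(3), \textbf{H}2(1), \textbf{H}3 to get the explicit $K(1+4n\Lambda+2n\Lambda L)|x-y|^{\beta_0}$ term, and handle the matrix piece by the mean value theorem together with the modulus of continuity of $F_{ij}$. The only cosmetic differences are that the paper swaps the order (it moves $x\to y$ first with $N$ fixed, then $N\to\tilde N$ at $y$), and that the paper bundles your separate ``$F_{ij}$-variation'' and ``$T_{ij,kl}(x)-T_{ij,kl}(q)$'' terms into a single modulus of continuity of the composite coefficient $a_{ij}(M,x)=F_{ij}(S(x)+T(M,x),x)T_{ij,kl}(x)$, which it then notes depends on $n,\Lambda,K,L,\beta_0$ and the modulus of continuity of $F_{ij}$.
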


\begin{proof}
By the definition of $\Psi$, we compute
\begin{equation}\label{matrix lemma equation1}
\begin{split}
&|\Psi_{H,q}(N,x)-\Psi_{H,q}(N,y)|\\
=~&|F\left(S(x)+T(N,x),x\right)-F\left(S(y)+T(N,y),y\right)|\\
\leq~&|F\left(S(x)+T(N,x),x\right)-F\left(S(x)+T(N,x),y\right)|\\
+&|F\left(S(x)+T(N,x),y\right)-F\left(S(y)+T(N,y),y\right)|\\
\leq~&K|x-y|^{\beta_{0}}+2n\Lambda\|S(x)+T(N,x)-S(y)-T(N,y)\|\\
\leq~&K(1+4n\Lambda+2n\Lambda L)|x-y|^{\beta_{0}}.
\end{split}
\end{equation}
where we use \textbf{H}1 (1), (3) in the second-to-last line and \textbf{H}2 (1), \textbf{H}3 in the last line.
By direct calculation, it is clear that
\begin{equation}\label{matrix lemma equation2}
\begin{split}
&|\Psi_{H,q}(N,y)-\Psi_{H,q}(\tilde{N},y)|\\
\leq~&|(\Psi_{H,q})_{ij}(\bar{N},y)|\cdot|N_{ij}-\tilde{N}_{ij}|\\
\leq~&|a_{ij}(\bar{N},y)-a_{ij}(H,q)|\cdot|N_{ij}-\tilde{N}_{ij}|,
\end{split}
\end{equation}
where in the second line we use the mean value theorem and $\bar{N}$ is on the segment with endpoints $N$ and $\tilde{N}$. For any $\varepsilon>0$, there exists a constant $\delta$ depending only on $\varepsilon$ and the moduli of continuity of $a_{ij}$ with the following property. Suppose
\begin{equation*}
\|\bar{N}-H\|+|y-q|<\delta,
\end{equation*}
then we have
\begin{equation*}
|a_{ij}(\bar{N},y)-a_{ij}(H,q)|<\varepsilon.
\end{equation*}
By the assumptions of $F$, $S$ and $T$, we can check that the moduli of continuity of $a_{ij}$ depends on $n$, $\Lambda$, $K$, $L$, $\beta_{0}$ and the moduli of continuity of $F_{ij}$. Therefore, there exists a constant $\delta$ depending only on $n$, $\Lambda$, $K$, $L$, $\beta_{0}$ and the moduli of continuity of $F_{ij}$ such that, if
\begin{equation*}
\|N-H\|+\|\tilde{N}-H\|+|x-q|+|y-q|<\delta,
\end{equation*}
then we have
\begin{equation*}
\begin{split}
|\Psi_{H,q}(N,x)-\Psi_{H,q}(\tilde{N},y)|&\leq|\Psi_{H,q}(N,x)-\Psi_{H,q}(N,y)|+|\Psi_{H,q}(N,y)-\Psi_{H,q}(\tilde{N},y)|\\
&\leq\varepsilon\|N-\tilde{N}\|+K(1+4n\Lambda+2n\Lambda L)|x-y|^{\beta_{0}},
\end{split}
\end{equation*}
where we use (\ref{matrix lemma equation1}) and (\ref{matrix lemma equation2}) in the second line.
\end{proof}

\begin{lemma}\label{local estimate}
Under the assumptions of Theorem \ref{PDE Theorem}, there exists a positive number $r$ such that for any $B_{q}(r)\subset B_{0}(\frac{3}{4})$, we have the following estimate
\begin{equation*}
[D^{2}u]_{C^{\gamma_{0}}(B_{q}(\frac{r}{2}))}\leq C,
\end{equation*}
where $r$ and $C$ depend only on $\alpha_{0}$, $K$, $n$, $\Lambda$, $\lambda$, $\beta_{0}$, $\|f\|_{C^{\alpha_{0}}(B_{0}(1))}$, $\|u\|_{L^{\infty}(B_{0}(1))}$ and the moduli of continuity of $F_{ij}$.
\end{lemma}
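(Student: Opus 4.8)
The plan is to run a Caffarelli-type perturbation scheme on dyadic balls, comparing $u$ at each scale $\rho_{k}$ around a center $x_{0}$ with the solution of the \emph{linear}, constant coefficient equation obtained by freezing the linearization of the equation at $x_{0}$, and iterating to produce quadratic polynomials that approximate $u$ to order $\rho_{k}^{2+\gamma_{0}}$. The reason this reaches the sharp exponent $\gamma_{0}=\min(\alpha_{0},\beta_{0})$ rather than some potentially tiny Evans--Krylov exponent is that in Lemma \ref{matrix lemma} the nonlinear error $\varepsilon\|N-\tilde{N}\|$ carries a \emph{free} small constant $\varepsilon$; after localizing, this error becomes negligible against the decay we are chasing.

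\emph{Preliminaries.} First I would invoke Theorem \ref{TWWY PDE Theorem} (applied on small interior balls, rescaled) to get $\|u\|_{C^{2,\alpha}(B_{0}(3/4))}\leq L$ for some $\alpha\in(0,1)$ and $L$ depending only on the admissible data; in particular $D^{2}u$ is bounded and has a \emph{universal} modulus of continuity on $B_{0}(3/4)$. Then I fix $\varepsilon>0$ small (its value to be pinned down at the very end in terms of $\lambda,\Lambda,K,n$ and the constants from interior estimates for linear elliptic equations), take $\delta=\delta(\varepsilon)$ from Lemma \ref{matrix lemma}, and choose $r>0$ so small that $L(r/2)^{\alpha}+r/2<\delta$. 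Then for any $q$ with $B_{q}(r)\subset B_{0}(3/4)$ and any $x_{0}\in B_{q}(r/2)$, the pair $(H,x_{0})$ with $H:=D^{2}u(x_{0})$ meets the hypotheses of Lemma \ref{matrix lemma} throughout $B_{x_{0}}(r/2)$, and by Lemma \ref{uniformly elliptic} the constant matrix $\big(a_{ij}(H,x_{0})\big)$ is uniformly elliptic with universal bounds.

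\emph{The iteration.} Fix $x_{0}\in B_{q}(r/2)$, write $a_{ij}=a_{ij}(H,x_{0})$ and $\rho_{k}=\theta^{k}r/2$ with $\theta\in(0,1)$ small. The claim is that there exist quadratic polynomials $P_{k}$ and a universal constant $C_{*}$ with
\begin{equation*}
\|u-P_{k}\|_{L^{\infty}(B_{x_{0}}(\rho_{k}))}\leq C_{*}\rho_{k}^{2+\gamma_{0}},\qquad\|D^{2}P_{k+1}-D^{2}P_{k}\|\leq C_{*}\rho_{k}^{\gamma_{0}},
\end{equation*}
with $P_{0}$ the second-order Taylor polynomial of $u$ at $x_{0}$ (the base case uses only $\|D^{2}u\|_{L^{\infty}}\leq L$ and the continuity of $D^{2}u$, enlarging $C_{*}$ by a factor depending only on $r$). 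For the inductive step I solve $a_{ij}D_{ij}h=a_{ij}H_{ij}$ in $B_{x_{0}}(\rho_{k})$ with $h=u$ on the boundary and let $P_{k+1}$ be the second-order Taylor polynomial of $h$ at $x_{0}$. Using the defining identity of $\Psi_{H,x_{0}}$ and $F(S(x_{0})+T(H,x_{0}),x_{0})=f(x_{0})$,
\begin{equation*}
a_{ij}D_{ij}(u-h)=\big[f(x)-f(x_{0})\big]-\big[\Psi_{H,x_{0}}(D^{2}u(x),x)-\Psi_{H,x_{0}}(H,x_{0})\big],
\end{equation*}
and Lemma \ref{matrix lemma} bounds the second bracket by $\varepsilon\|D^{2}u(x)-D^{2}u(x_{0})\|+K(1+4n\Lambda+2n\Lambda L)|x-x_{0}|^{\beta_{0}}$, while the first is at most $[f]_{C^{\alpha_{0}}}|x-x_{0}|^{\alpha_{0}}$; since $\gamma_{0}=\min(\alpha_{0},\beta_{0})$ these explicit terms are $O(\rho_{k}^{\gamma_{0}})$ in sup norm. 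For the nonlinear term I would \emph{not} bound $\|D^{2}u-D^{2}u(x_{0})\|$ by the a priori $C^{\alpha}$ estimate --- which only yields $\rho_{k}^{\alpha}$, too weak if $\alpha<\gamma_{0}$ --- but instead feed the previous inductive step into an interior $W^{2,n}$ estimate for the linear equation $\tilde{b}_{ij}D_{ij}(u-P_{k})=G_{k}$ satisfied by $u-P_{k}$, where $\tilde{b}_{ij}$ is the averaged linearization (uniformly elliptic by the argument of Lemma \ref{uniformly elliptic}, with a universal modulus of continuity because $u\in C^{2,\alpha}$) and $\|G_{k}\|_{L^{\infty}(B_{x_{0}}(\rho_{k}))}=O(C_{*}\rho_{k}^{\gamma_{0}})$; this gives $\|D^{2}u-D^{2}u(x_{0})\|_{L^{n}(B_{x_{0}}(\rho_{k}))}=O(C_{*}\rho_{k}^{1+\gamma_{0}})$ (also using $\|D^{2}P_{k}-D^{2}u(x_{0})\|=O(C_{*}\rho_{k}^{\gamma_{0}})$, obtained by summing the telescoping bounds for $j\geq k$). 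Combining the ABP estimate for $u-h$ with the interior estimates for the constant coefficient equation solved by $h$ then produces a recursion of the form $(\text{new})\leq\big(c\,\theta^{1-\gamma_{0}}+c_{\theta}\,\varepsilon\big)(\text{old})+(\text{terms of lower order in }C_{*})$ for the ratio $\|u-P_{k}\|_{L^{\infty}(B_{x_{0}}(\rho_{k}))}/\rho_{k}^{2+\gamma_{0}}$, which closes upon choosing, in this order, $\theta$ small, then $\varepsilon$ small, then $C_{*}$ large, then $r$ small. The bounds on $\|u-P_{k}\|$ and on $D^{2}P_{k}$ finally give, by the standard Campanato argument, $[D^{2}u]_{C^{\gamma_{0}}(B_{q}(r/2))}\leq C$ with $C$ universal.

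\emph{Main obstacle.} The crux is the nonlinear term in the inductive step: one must make it genuinely of order $\rho_{k}^{1+\gamma_{0}}$ in $L^{n}$. This is exactly where the $W^{2,n}$ upgrade of the sup-norm closeness $\|u-P_{k}\|$ to $L^{n}$-smallness of $\|D^{2}u-D^{2}P_{k}\|$ is indispensable, since the a priori exponent $\alpha$ from Theorem \ref{TWWY PDE Theorem} may be far below $\gamma_{0}$ and is useless here. A secondary point requiring care is that $\gamma_{0}=\min(\alpha_{0},\beta_{0})$ is sharp, so there is no slack: the contributions of $f\in C^{\alpha_{0}}$ and of the $C^{\beta_{0}}$ ingredients in $F,S,T$ must be tracked to land exactly at $\rho_{k}^{\gamma_{0}}$, and in particular one cannot afford the Sobolev loss $n/p$ that restricts Lemma \ref{regularity proposition} to exponents below $\gamma_{0}$.
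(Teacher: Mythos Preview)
Your strategy---a Caffarelli-type dyadic iteration around each point $x_{0}$---is a genuinely different route from the paper's, but as written the inductive step does not close. The problem is the claim $\|D^{2}P_{k}-D^{2}u(x_{0})\|=O(C_{*}\rho_{k}^{\gamma_{0}})$, which you justify by ``summing the telescoping bounds for $j\geq k$''. At stage $k$ of the induction you only possess $\|D^{2}P_{j+1}-D^{2}P_{j}\|\leq C_{*}\rho_{j}^{\gamma_{0}}$ for $j<k$; summing those gives merely $\|D^{2}P_{k}-H\|\leq C_{*}\rho_{0}^{\gamma_{0}}/(1-\theta^{\gamma_{0}})$, a bound that does not decay in $k$. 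This non-decaying piece then contaminates both your $W^{2,n}$ control of $\|D^{2}u-H\|_{L^{n}(B_{\rho_{k}})}$ and your bound $\|G_{k}\|_{L^{\infty}}=O(C_{*}\rho_{k}^{\gamma_{0}})$ (since $G_{k}=f(x)-F(S(x)+T(D^{2}P_{k},x),x)$ contains a term of size $O(\|D^{2}P_{k}-H\|)$). Feeding this into the ABP step yields a contribution of order $\varepsilon C_{*}\rho_{k}^{2}$ to $\|u-h\|_{L^{\infty}}$, whose ratio to $\rho_{k+1}^{2+\gamma_{0}}$ blows up as $k\to\infty$; the recursion you describe therefore does not close regardless of how $\theta,\varepsilon,C_{*},r$ are tuned. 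A natural repair is to re-freeze the linearization at $D^{2}P_{k}$ (rather than at $H$) at each scale, so that the nonlinear error is $\varepsilon\|D^{2}u-D^{2}P_{k}\|$ and is governed directly by the $W^{2,n}$ estimate for $u-P_{k}$; but you would then have to verify that the smallness hypotheses of Lemma~\ref{matrix lemma} persist for $(D^{2}P_{k},x_{0})$ at every step, which in turn forces $r$ to depend on $C_{*}$.

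The paper avoids the iteration entirely. It first invokes Lemma~\ref{regularity proposition} (difference quotients plus interior $L^{p}$ estimates) to secure the \emph{qualitative} fact $u\in C^{2,\gamma}(B_{0}(3/4))$ for every $\gamma<\gamma_{0}$, so that $[D^{2}u]^{(2)}_{C^{\gamma}(B_{q}(r))}$ is finite. Freezing the linearization at the center $q$, Lemma~\ref{matrix lemma} gives
\[
[a_{ij}(D^{2}u(q),q)u_{ij}]^{(2)}_{C^{\gamma}(B_{q}(r))}\leq \varepsilon\,[D^{2}u]^{(2)}_{C^{\gamma}(B_{q}(r))}+[f]^{(2)}_{C^{\gamma}}+L_{0},
\]
and the interior Schauder estimate for the constant-coefficient operator $a_{ij}(D^{2}u(q),q)D_{ij}$ (uniformly elliptic by Lemma~\ref{uniformly elliptic}) bounds $[D^{2}u]^{(2)}_{C^{\gamma}}$ by $C_{\gamma}$ times the right-hand side. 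Taking $\varepsilon=1/(2C_{\gamma})$ and absorbing yields a uniform bound on $[D^{2}u]^{(2)}_{C^{\gamma}(B_{q}(r))}$; finally one lets $\gamma\uparrow\gamma_{0}$, using that $C_{\gamma}\to C_{\gamma_{0}}<\infty$ and hence $r_{\gamma}\to r_{\gamma_{0}}>0$. The point you are missing is that once $[D^{2}u]^{(2)}_{C^{\gamma}}$ is known to be \emph{finite} (Lemma~\ref{regularity proposition}), the $\varepsilon$-absorption is a one-line algebraic step and no scale-by-scale bootstrap is needed.
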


\begin{proof}
By Theorem \ref{TWWY PDE Theorem}, we have
\begin{equation*}
\|u\|_{C^{2,\alpha}(B_{0}(\frac{3}{4}))}\leq C_{1},
\end{equation*}
where $\alpha$ and $C_{1}$ depend on $\alpha_{0}$, $K$, $n$, $\Lambda$, $\lambda$, $\beta_{0}$, $\|f\|_{C^{\alpha_{0}}(B_{0}(1))}$ and $\|u\|_{L^{\infty}(B_{0}(1))}$. For any $B_{q}(r)\subset B_{0}(\frac{3}{4})$, it is clear that
\begin{equation}\label{local estimate equation1}
\sup_{x,y\in B_{q}(r)}\|D^{2}u(x)-D^{2}u(y)\|\leq 2^{\alpha}r^{\alpha}[D^{2}u]_{C^{\alpha}(B_{0}(\frac{3}{4}))}.
\end{equation}
By (\ref{local estimate equation1}) and Lemma \ref{matrix lemma}, for any $\varepsilon>0$, we can choose $r$ sufficiently small such that for all $x,y\in B_{q}(r)$, we have
\begin{equation*}
\|D^{2}u(x)-D^{2}u(q)\|+\|D^{2}u(y)-D^{2}u(q)\|+|x-q|+|y-q|<\delta,
\end{equation*}
which implies
\begin{equation*}
|\Psi_{D^{2}u(q),q}(D^{2}u(x),x)-\Psi_{D^{2}u(q),q}(D^{2}u(y),y)|\leq\varepsilon\|D^{2}u(x)-D^{2}u(y)\|+L_{0}|x-y|^{\beta_{0}},
\end{equation*}
where $L_{0}=K(1+4n\Lambda+2n\Lambda L)$. It then follows that
\begin{equation}\label{local estimate equation4}
|f(x)-f(y)-a_{ij}(D^{2}u(q),q)(u_{ij}(x)-u_{ij}(y))|\leq\varepsilon\|D^{2}u(x)-D^{2}u(y)\|+L_{0}|x-y|^{\beta_{0}}.
\end{equation}
By Lemma \ref{regularity proposition}, we have $u\in C^{2,\gamma}(B_{0}(\frac{3}{4}))$ for any $\gamma\in(0,\gamma_{0})$. Thus, (\ref{local estimate equation4}) implies that
\begin{equation}\label{local estimate equation2}
[a_{ij}(D^{2}u(q),q)u_{ij}]_{C^{\gamma}(B_{q}(r))}^{(2)}\leq\varepsilon [D^{2}u]_{C^{\gamma}(B_{q}(r))}^{(2)}+[f]_{C^{\gamma}(B_{q}(r))}^{(2)}+2K(1+4n\Lambda+2n\Lambda L).
\end{equation}
By Lemma \ref{uniformly elliptic} and the Schauder estimate, we obtain
\begin{equation}\label{local estimate equation3}
[D^{2}u]_{C^{\gamma}(B_{q}(r))}^{(2)}\leq C_{\gamma}\left([a_{ij}(D^{2}u(q),q)u_{ij}]_{C^{\gamma}(B_{q}(r))}^{(2)}+\|u\|_{L^{\infty}(B_{q}(r))}\right),
\end{equation}
where $C_{\gamma}$ depends only on $n$, $\gamma$, $K^{-1}\lambda$ and $\sqrt{2n}K\Lambda$. Combining (\ref{local estimate equation2}) and (\ref{local estimate equation3}), we have
\begin{equation*}
\begin{split}
[D^{2}u]_{C^{\gamma}(B_{q}(r))}^{(2)}\leq&~ C_{\gamma}\varepsilon [D^{2}u]_{C^{\gamma}(B_{q}(r))}^{(2)}+C_{\gamma}[f]_{C^{\gamma}(B_{q}(r))}^{(2)}\\
&+2KC_{\gamma}(1+4n\Lambda+2n\Lambda L)+C_{\gamma}\|u\|_{L^{\infty}(B_{q}(r))}.
\end{split}
\end{equation*}
Taking $\varepsilon=\frac{1}{2C_{\gamma}}$, we can obtain
\begin{equation}\label{local estimate equation5}
[D^{2}u]_{C^{\gamma}(B_{q}(r))}^{(2)}\leq 2C_{\gamma}[f]_{C^{\gamma}(B_{q}(r))}^{(2)}+4KC_{\gamma}(1+4n\Lambda+2n\Lambda L)+2C_{\gamma}\|u\|_{L^{\infty}(B_{q}(r))}.
\end{equation}
It is clear that $r$ depends on $\varepsilon$ and $\varepsilon$ depends on $\gamma$. Thus, we use $\varepsilon_{\gamma}$ to denote $\varepsilon$ and $r_{\gamma}$ to denote $r$. Since $C_{\gamma}$ is the constant in the Schauder estimate and $\gamma_{0}\in(0,1)$, we have
\begin{equation*}
\lim_{\gamma\rightarrow\gamma_{0}}C_{\gamma}=C_{\gamma_{0}}<\infty,
\end{equation*}
which implies
\begin{equation*}
\lim_{\gamma\rightarrow\gamma_{0}}\varepsilon_{\gamma}=\varepsilon_{\gamma_{0}}>0~~and~~\lim_{\gamma\rightarrow\gamma_{0}}r_{\gamma}=r_{\gamma_{0}}>0.
\end{equation*}
For convenience, we still use $r$ to denote the limit $r_{\gamma_{0}}$ in the following. Then by letting $\gamma\rightarrow\gamma_{0}$ in (\ref{local estimate equation5}), we get
\begin{equation*}
[D^{2}u]_{C^{\gamma_{0}}(B_{q}(r))}^{(2)}\leq 2C_{\gamma_{0}}[f]_{C^{\gamma_{0}}(B_{q}(r))}^{(2)}+4KC_{\gamma_{0}}(1+4n\Lambda+2n\Lambda L)+2C_{\gamma_{0}}\|u\|_{L^{\infty}(B_{q}(r))},
\end{equation*}
which implies $u\in C^{2,\gamma_{0}}(B_{q}(\frac{r}{2}))$ and
\begin{equation*}
[D^{2}u]_{C^{\gamma_{0}}(B_{q}(\frac{r}{2}))}\leq C,
\end{equation*}
where $r$ and $C$ depend only on $\alpha_{0}$, $K$, $n$, $\Lambda$, $\lambda$, $\beta_{0}$, $\|f\|_{C^{\alpha_{0}}(B_{0}(1))}$, $\|u\|_{L^{\infty}(B_{0}(1))}$ and the moduli of continuity of $F_{ij}$.
\end{proof}

\begin{proof}[Proof of Theorem \ref{PDE Theorem}]
We now take any $x,y\in B_{0}(\frac{1}{2})$. Let $q=(x+y)/2$ and $r$ be the constant that we obtain from Lemma \ref{local estimate}. If $|x-y|<r$, we have
\begin{equation*}
\frac{|D^{2}u(x)-D^{2}u(y)|}{|x-y|^{\gamma_{0}}}\leq [D^{2}u]_{C^{\gamma_{0}}(B_{q}(\frac{r}{2}))}.
\end{equation*}
If $|x-y|\geq r$, then
\begin{equation*}
\frac{|D^{2}u(x)-D^{2}u(y)|}{|x-y|^{\gamma_{0}}}\leq2r^{-\gamma_{0}}\|D^{2}u\|_{L^{\infty}(B_{q}(\frac{r}{2}))}.
\end{equation*}
Hence, by Lemma \ref{local estimate} and interpolation inequalities, we complete the proof.
\end{proof}

\section{The proof of Theorem \ref{New Conical Theorem}}
For the reader's convenience, let us recall some definitions of cone metric. We consider the model cone metric on $B_{0}(1)\subset\mathbb{C}^{n}$
\begin{equation*}
\omega_{\beta}=\sqrt{-1}\frac{\beta^{2}}{|z|^{2-2\beta}}dz\wedge d\bar{z}+\sqrt{-1}\sum_{k=2}^{n}dz_{k}\wedge d\bar{z}_{k},
\end{equation*}
where $(z,z_{2},\cdot\cdot\cdot,z_{n})$ is the standard coordinates of $B_{0}(1)\subset\mathbb{C}^{n}$.

Let $\xi=|z|^{\beta-1}z$. For any function $f(z,z_{2},\cdot\cdot\cdot,z_{n})$ on $B_{0}(1)$, we let
\begin{equation*}
\tilde{f}(\xi,z_{2},\cdot\cdot\cdot,z_{n})=f(z,z_{2},\cdot\cdot\cdot,z_{n}).
\end{equation*}
We write
\begin{equation*}
C^{\alpha,\beta}=\{f\in L^{\infty}|\tilde{f}\in C^{\alpha}\}
\end{equation*}
and
\begin{equation*}
C_{0}^{\alpha,\beta}=\{f\in C^{\alpha,\beta}|f(0,z_{2},\cdot\cdot\cdot,z_{n})=0\}.
\end{equation*}
A (1,0)-form $\tau$ is said to be of class $C^{\alpha,\beta}$ if
\begin{equation*}
\tau\left(\frac{\partial}{\partial z_{k}}\right)\in C^{\alpha,\beta}~~and~~|z|^{1-\beta}\tau\left(\frac{\partial}{\partial z}\right)\in C_{0}^{\alpha,\beta}~~k=2,\cdot\cdot\cdot,n.
\end{equation*}
A (1,1)-form $\sigma$ is said to be of class $C^{\alpha,\beta}$ if
\begin{equation*}
\sigma\left(\frac{\partial}{\partial z_{k}},\frac{\partial}{\partial \bar{z}_{l}}\right)\in C^{\alpha,\beta},~~|z|^{1-\beta}\sigma\left(\frac{\partial}{\partial z_{k}},\frac{\partial}{\partial \bar{z}}\right)\in C_{0}^{\alpha,\beta},~~|z|^{1-\beta}\sigma\left(\frac{\partial}{\partial z},\frac{\partial}{\partial \bar{z}_{l}}\right)\in C_{0}^{\alpha,\beta}
\end{equation*}
and
\begin{equation*}
|z|^{2-2\beta}\sigma\left(\frac{\partial}{\partial z},\frac{\partial}{\partial \bar{z}}\right)\in C^{\alpha,\beta}~~k,l=2,\cdot\cdot\cdot,n.
\end{equation*}
We write
\begin{equation*}
C^{2,\alpha,\beta}=\{f\in C^{2}(B_{0}(1)\setminus D)|f,\partial f,\partial\bar{\partial}f~~are~of~class~C^{2,\alpha,\beta}\},
\end{equation*}
where $D=\{(z,z_{2},\cdot\cdot\cdot,z_{n})\in \mathbb{C}^{n}|z=0\}$. For more details of cone metric, see Section 2 of \cite{Br}. Now, let us prove Theorem \ref{New Conical Theorem}. First, We have the following proposition.
\begin{proposition}\label{Conical Proposition}
Let $\omega_{C}=C\omega_{\beta}\bar{C}^{T}$ ($C$ is a constant matrix in $\mathbb{C}^{n\times n}$) and $\lambda\omega_{\beta}\leq\omega_{C}\leq\Lambda\omega_{\beta}$. Let $\alpha_{0}\in(0,\min(\frac{1}{\beta}-1,1))$ be a constant. There exist a positive number $\delta_{C}$ and $C_{C}$, both depending only on $\lambda$, $\Lambda$, $n$, $\beta$ and $\alpha_{0}$, with the following property. Suppose $\phi\in C^{2,\gamma,\beta}(B_{0}(1))$ for any $\gamma\in(0,\alpha_{0})$, $\sqrt{-1}\partial\bar{\partial}\phi$ is a conical K\"{a}hler metric and $f\in C^{\alpha_{0},\beta}(B_{0}(1))$. Suppose
\begin{equation*}
\det\phi_{i\bar{j}}=e^{f}~~and~~\frac{\omega_{C}}{1+\delta_{C}}\leq\sqrt{-1}\partial\bar{\partial}\phi\leq(1+\delta_{C})\omega_{C}~~over~B_{0}(1)\backslash\{0\}\times\mathbb{C}^{n-1},
\end{equation*}
then we have $\phi\in C^{2,\alpha_{0},\beta}(B_{0}(\frac{1}{2}))$ and the following estimate
\begin{equation*}
[\sqrt{-1}\partial\bar{\partial}\phi]_{C^{\alpha_{0},\beta}(B_{0}(\frac{1}{2}))}\leq C_{C}\left([e^{f}]_{C^{\alpha_{0},\beta}(B_{0}(1))}+\|\phi\|_{L^{\infty}(B_{0}(1))}\right).
\end{equation*}
\end{proposition}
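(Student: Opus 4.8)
The plan is to reduce to the model cone and then run the frozen-coefficient Schauder iteration of Lemmas \ref{regularity proposition}--\ref{local estimate}, with the Euclidean Schauder estimate replaced by the conical one. First I would use the constant matrix $C$: it induces a linear change of coordinates carrying $\omega_C$ to $\omega_\beta$ and preserving the classes $C^{\alpha,\beta}$ up to constants controlled by $\lambda$, $\Lambda$, $n$, so after it we may assume $\omega_C=\omega_\beta$ and $\frac{1}{1+\delta_C}\omega_\beta\le\sqrt{-1}\partial\bar\partial\phi\le(1+\delta_C)\omega_\beta$. Passing to the coordinates in which $\omega_\beta$ becomes the flat cone metric --- so that its Laplacian $\Delta_{\omega_\beta}$ is a constant-coefficient operator on the model cone --- the equation $\det\phi_{i\bar j}=e^f$ reads $\log\det\phi_{i\bar j}=f$, where now $(\phi_{i\bar j})$ denotes the complex Hessian of $\phi$ in the flat cone frame, $f\in C^{\alpha_0,\beta}$, and the two-sided bound says exactly that $(\phi_{i\bar j}(x))$ lies in the $\delta_C$-neighborhood of the identity matrix $I$ for every $x$. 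Unlike in Section 2, no preliminary regularity step (no analogue of Lemma \ref{regularity proposition} or of Theorem \ref{TWWY PDE Theorem}) is needed: the hypothesis that $\phi\in C^{2,\gamma,\beta}$ for every $\gamma<\alpha_0$ serves only to make the weighted H\"{o}lder seminorms below a priori finite, so that the absorption step is legitimate.

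The iteration is the conical analogue of Lemmas \ref{uniformly elliptic}, \ref{matrix lemma} and \ref{local estimate}. The operator $F(H)=\log\det H$ is uniformly elliptic and concave on the set of Hermitian matrices within $\delta_C$ of $I$, with constants and a modulus of continuity for $F_{ij}$ depending only on $n$ and $\delta_C$; in particular the frozen coefficients $a^{i\bar j}(q)=F_{i\bar j}(\phi_{i\bar j}(q))$ are within $C(n)\delta_C$ of $I$, hence uniformly elliptic. Set $\Psi_q(x)=\log\det\phi_{i\bar j}(x)-a^{i\bar j}(q)\phi_{i\bar j}(x)$. As in Lemma \ref{matrix lemma}, but with no explicit $x$-dependence (hence no $|x-y|^{\beta_0}$ term), concavity and the mean value theorem give: for every $\varepsilon>0$ there is $r>0$ depending only on $\varepsilon$, $n$, $\delta_C$ such that on any cone ball $B_q(r)$ in the domain, $|\Psi_q(x)-\Psi_q(y)|\le\varepsilon\|\phi_{i\bar j}(x)-\phi_{i\bar j}(y)\|$ --- the admissibility condition of that lemma is met because $\|\phi_{i\bar j}(x)-\phi_{i\bar j}(q)\|\le 2\delta_C$ and $|x-q|<r$ once $\delta_C$ and $r$ are small. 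Hence on $B_q(r)$ the Hessian solves the frozen linear equation $a^{i\bar j}(q)\phi_{i\bar j}=f+\Psi_q$, giving $[a^{i\bar j}(q)\phi_{i\bar j}]_{C^{\gamma,\beta}(B_q(r))}\le\varepsilon[\sqrt{-1}\partial\bar\partial\phi]_{C^{\gamma,\beta}(B_q(r))}+[f]_{C^{\gamma,\beta}(B_q(r))}$, exactly as in the proof of Lemma \ref{local estimate}. Applying the interior Schauder estimate for $a^{i\bar j}(q)\partial_i\partial_{\bar j}$ --- the classical one if $B_q(r)$ avoids the apex, the conical one for the flat cone Laplacian (finite constant for exponents $<\frac1\beta-1$, and stable under $C(n)\delta_C$-perturbations with a uniform constant once $\delta_C$ is small) if $B_q(r)$ meets it --- yields $[\sqrt{-1}\partial\bar\partial\phi]_{C^{\gamma,\beta}(B_q(r/2))}\le C_\gamma\big([a^{i\bar j}(q)\phi_{i\bar j}]_{C^{\gamma,\beta}(B_q(r))}+\|\phi\|_{L^\infty(B_q(r))}\big)$ with $C_\gamma$ depending only on $n$, $\gamma$, $\beta$. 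Taking $\varepsilon=\frac{1}{2C_\gamma}$ (so that $\delta_C$ and $r$ depend only on $n$, $\beta$, $\alpha_0$), absorbing the $\varepsilon$-term, and passing to the limit $\gamma\to\alpha_0$ (using $\lim_{\gamma\to\alpha_0}C_\gamma<\infty$, which requires $\alpha_0<\min(1,\frac1\beta-1)$) just as in Lemma \ref{local estimate}, one gets a local bound on $[\sqrt{-1}\partial\bar\partial\phi]_{C^{\alpha_0,\beta}(B_q(r/2))}$ in terms of $[f]_{C^{\alpha_0,\beta}}$, $\|\phi\|_{L^\infty}$ and universal constants. A covering of $B_0(\frac12)$ by such balls together with interpolation --- as in the proof of Theorem \ref{PDE Theorem}, the $L^\infty$ bound on the Hessian coming from the pinching --- and undoing the two coordinate changes (and replacing $[f]$ by the equivalent $[e^f]$) finish the proof.

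The hard part will be the conical Schauder estimate near the apex: one needs $[\sqrt{-1}\partial\bar\partial v]_{C^{\alpha_0,\beta}(B_q(r/2))}\le C_{\alpha_0}\big([a^{i\bar j}(q)v_{i\bar j}]_{C^{\alpha_0,\beta}(B_q(r))}+\|v\|_{L^\infty(B_q(r))}\big)$ with a constant uniform over all constant Hermitian $a^{i\bar j}(q)$ in a small neighborhood of $I$, and this estimate remains valid with a finite constant precisely up to the exponent $\frac1\beta-1$ --- the threshold dictated by the indicial exponents of the model cone Laplacian, which is exactly the upper restriction on $\alpha_0$ in the statement. This is where the smallness of $\delta_C$ enters decisively: one cannot in general normalize $a^{i\bar j}(q)$ to $I$ by a linear change of coordinates without destroying the cone structure, so the coefficients can only be frozen up to a $\delta_C$-perturbation of the model, and one must invoke the stability of the conical Schauder estimate under such perturbations (available from the model case in \cite{Br}, or from \cite{CW1402}). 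A secondary point is to verify that the passage to flat cone coordinates transforms the complex Monge-Amp\`{e}re equation and all the weighted H\"{o}lder norms into their flat-cone counterparts cleanly; the restriction $\alpha_0<\frac1\beta-1$ surfaces there too.
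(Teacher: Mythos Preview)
Your overall strategy --- freeze coefficients, use the conical Schauder estimate, absorb, pass to the limit $\gamma\to\alpha_0$ --- matches the paper's. But the paper's execution is substantially simpler, and one step of yours is shaky.

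\textbf{The paper's simplification.} Because the pinching $\frac{\omega_C}{1+\delta_C}\le\sqrt{-1}\partial\bar\partial\phi\le(1+\delta_C)\omega_C$ is assumed to hold \emph{globally} on $B_0(1)$, the paper freezes once, at the single fixed matrix $C\bar C^T$, rather than at each point $q$. Concretely, with $F_G(M)=\det M-\det G\cdot\mathrm{tr}(G^{-1}M)$ (not $\log\det$), Lemma~\ref{Conical matrix lemma} gives $|F_{C\bar C^T}(\partial\bar\partial\phi(x))-F_{C\bar C^T}(\partial\bar\partial\phi(y))|\le\varepsilon\|\partial\bar\partial\phi(x)-\partial\bar\partial\phi(y)\|$ for \emph{all} $x,y\in B_0(1)$ once $\delta_C$ is small. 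This immediately yields the weighted seminorm inequality on the whole ball, and a single application of the conic Schauder estimate for $\Delta_{C\bar C^T}$ finishes after absorption and $\gamma\to\alpha_0$. No local balls, no covering, no interpolation step are needed. Your local-freezing-plus-covering is the natural adaptation of Section~2, but here it is unnecessary extra work: the hypothesis already hands you what the $C^{2,\alpha}$ preliminary step was used to obtain there.

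\textbf{The questionable step.} Your opening move --- ``the constant matrix $C$ induces a linear change of coordinates carrying $\omega_C$ to $\omega_\beta$ and preserving the classes $C^{\alpha,\beta}$'' --- does not hold in general. Since $\omega_\beta$ depends on $|z|^{2-2\beta}$, a linear map that mixes the $z$-direction with the $z_k$-directions does not send $\omega_C$ to the model cone metric, nor does it preserve the conical H\"older classes. You yourself observe this obstruction later (``one cannot in general normalize $a^{i\bar j}(q)$ to $I$ by a linear change of coordinates without destroying the cone structure''), and the same objection applies to $\omega_C$. The paper simply avoids this: it never normalizes, working directly with the operator $\Delta_{C\bar C^T}$ and invoking the conic Schauder estimate for that constant-coefficient operator, with constants controlled by $\lambda,\Lambda,n,\beta,\gamma$. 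Dropping your initial reduction and freezing at $C\bar C^T$ globally recovers exactly the paper's argument.
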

Given this proposition, we can prove Theorem \ref{New Conical Theorem} by the same argument in the proof of Theorem \ref{Main Theorem}. Therefore, it suffices to prove Proposition \ref{Conical Proposition}. In fact, the proof of Proposition \ref{Conical Proposition} is very similar to the proof of Proposition 5.2 in \cite{CW1402}. For the reader's convenience, we give a proof here.

First, for any positive definite matrix $G$ such that
\begin{equation*}
\lambda I_{n}\leq G\leq\Lambda I_{n},
\end{equation*}
we define a function of matrix
\begin{equation*}
F_{G}(M)=\det M-\det G\cdot tr(G^{-1}M).
\end{equation*}

Then we have the following lemma.
\begin{lemma}\label{Conical matrix lemma}
For any $\varepsilon>0$, there exists a positive number $\delta$ depending only on $\varepsilon$, $n$, $\lambda$ and $\Lambda$, with the following property. If we have
\begin{equation*}
\|M-G\|+\|\tilde{M}-G\|<\delta,
\end{equation*}
then
\begin{equation*}
|F_{G}(M)-F_{G}(\tilde{M})|\leq\varepsilon\|M-\tilde{M}\|.
\end{equation*}
\end{lemma}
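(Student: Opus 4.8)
The plan is to establish Lemma \ref{Conical matrix lemma} as a routine compactness/continuity statement about the nonlinear function $F_G(M) = \det M - \det G \cdot tr(G^{-1}M)$, exploiting the fact that its gradient $(F_G)_{ij}$ is continuous and vanishes precisely at $M = G$. First I would compute the derivative of $F_G$ explicitly: writing $(F_G)_{ij}(M) = \frac{\partial}{\partial m_{ij}}\det M - \det G \cdot (G^{-1})_{ji}$, and observing that $\frac{\partial}{\partial m_{ij}}\det M = \det M \cdot (M^{-1})_{ji}$ (the cofactor), one sees that at $M = G$ we get $(F_G)_{ij}(G) = \det G \cdot (G^{-1})_{ji} - \det G \cdot (G^{-1})_{ji} = 0$. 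So $DF_G$ vanishes at $G$.

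Next I would apply the mean value theorem along the segment joining $M$ and $\tilde M$: there is $\bar M$ on this segment with
\begin{equation*}
|F_G(M) - F_G(\tilde M)| = |(F_G)_{ij}(\bar M)(m_{ij} - \tilde m_{ij})| \leq \|DF_G(\bar M)\| \cdot \|M - \tilde M\|.
\end{equation*}
Since $\|M - G\| < \delta$ and $\|\tilde M - G\| < \delta$, the point $\bar M$ satisfies $\|\bar M - G\| < \delta$ as well. The cofactor entries of $M$ are polynomials in the entries of $M$, hence $DF_G$ is (locally uniformly) continuous, and $DF_G(G) = 0$; therefore for every $\varepsilon > 0$ there is $\delta > 0$ with $\|DF_G(\bar M)\| < \varepsilon$ whenever $\|\bar M - G\| < \delta$. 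This gives the claimed inequality.

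The one point requiring care — and the only real obstacle — is the \emph{uniformity} of $\delta$ in $G$ over the admissible range $\lambda I_n \leq G \leq \Lambda I_n$, which is asserted in the statement. Here I would argue by compactness: the set of such $G$ is compact, the map $(G, M) \mapsto DF_G(M)$ is jointly continuous (polynomial in $M$, and continuous in $G$ since $G^{-1}$ and $\det G$ are continuous for $G$ in this range where $\det G \geq \lambda^n > 0$), and it vanishes on the compact diagonal $\{(G,G)\}$. A standard uniform-continuity argument on the compact set $\{(G,M) : \lambda I_n \leq G \leq \Lambda I_n,\ \|M - G\| \leq 1\}$ then yields a single $\delta = \delta(\varepsilon, n, \lambda, \Lambda)$ that works for all $G$ simultaneously; one may take $\delta \leq \tfrac{1}{2}$ so that $\bar M$ stays inside this compact set. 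This completes the proof; everything else is the routine computation sketched above.
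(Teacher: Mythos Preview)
Your argument is correct: computing $DF_G(G)=0$, applying the mean value theorem along the segment, and then invoking joint continuity of $(G,M)\mapsto DF_G(M)$ together with compactness of $\{\lambda I_n\le G\le\Lambda I_n\}$ to obtain a uniform $\delta$ is a clean and complete proof of the lemma.

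The paper takes a different route. Rather than arguing by compactness over all admissible $G$, it exploits the algebraic identity
\[
F_G(M)=(\det G)\,F_I(G^{-1}M),
\]
which reduces the general case to the single case $G=I$. One first finds $\delta_I=\delta_I(\varepsilon,n)$ for $F_I$ near $I$ (using $dF_I(I)=0$, exactly as you observed), and then transports this back via $M\mapsto G^{-1}M$: the condition $\|M-G\|+\|\tilde M-G\|<\delta$ with $\delta=\lambda\delta_I/\sqrt{n}$ forces $\|G^{-1}M-I\|+\|G^{-1}\tilde M-I\|<\delta_I$, and one picks up controlled factors $\det G\le\Lambda^n$ and $\|G^{-1}\|\le\sqrt{n}/\lambda$ in the estimate. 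The paper's approach thus yields an \emph{explicit} $\delta$ in terms of $\delta_I$, $n$, $\lambda$, $\Lambda$, whereas your compactness argument only asserts existence; on the other hand your argument is more robust and would apply verbatim to any $C^1$ family $F_G$ with $DF_G(G)=0$ and continuous dependence on $G$ over a compact parameter set, without needing a special scaling identity.
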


\begin{proof}
First, we observe that $dF_{I}(I)=0$. Thus, for any $\varepsilon>0$, there exists a positive number $\delta_{I}=\delta_{I}(\varepsilon,n)$ with the following property. If we have
\begin{equation*}
\|N-I\|+\|\tilde{N}-I\|<\delta_{I},
\end{equation*}
then
\begin{equation}\label{Conical matrix lemma equation1}
|F_{I}(N)-F_{I}(\tilde{N})|\leq\varepsilon\|N-\tilde{N}\|.
\end{equation}
Let $\delta=\frac{\lambda\delta_{I}}{\sqrt{n}}$. Hence, if
\begin{equation*}
\|M-G\|+\|\tilde{M}-G\|<\delta,
\end{equation*}
then
\begin{equation*}
\|G^{-1}M-I\|+\|G^{-1}\tilde{M}-I\|\leq\|G^{-1}\|\delta\leq\delta_{I}.
\end{equation*}
By (\ref{Conical matrix lemma equation1}), we obtain
\begin{equation*}
\begin{split}
|F_{G}(M)-F_{G}(\tilde{M})|&=(\det G)|F_{I}(G^{-1}M)-F_{I}(G^{-1}M)|\\
&\leq\varepsilon\Lambda^{n}\|G^{-1}(M-\tilde{M})\|\\
&\leq\frac{\sqrt{n}\varepsilon\Lambda^{n}}{\lambda}\|M-\tilde{M}\|.
\end{split}
\end{equation*}
We complete the proof.
\end{proof}

\begin{proof}[Proof of Proposition \ref{Conical Proposition}]
We compute under the singular coordinates. By Lemma \ref{Conical matrix lemma}, for any $\varepsilon>0$, there exists a constant $\delta_{C}$ depending on $\varepsilon$ with the following property. When
\begin{equation*}
(1-\delta_{C})C\bar{C}^{T}\leq\sqrt{-1}\partial\bar{\partial}\phi\leq(1+\delta_{C})C\bar{C}^{T},
\end{equation*}
we have
\begin{equation*}
|F_{C\bar{C}^{T}}(\sqrt{-1}\partial\bar{\partial}\phi(x))-F_{C\bar{C}^{T}}(\sqrt{-1}\partial\bar{\partial}\phi(y))|\leq\varepsilon\|\sqrt{-1}\partial\bar{\partial}\phi(x)-\sqrt{-1}\partial\bar{\partial}\phi(y)\|,
\end{equation*}
which implies
\begin{equation*}
[\det(\sqrt{-1}\partial\bar{\partial}\phi)-(\det(C\bar{C}^{T}))\Delta_{C\bar{C}^{T}}\phi]_{C^{\gamma,\beta}(B_{0}(1))}^{(2)}\leq\varepsilon[\sqrt{-1}\partial\bar{\partial}\phi]_{C^{\gamma,\beta}(B_{0}(1))}^{(2)}.
\end{equation*}
Since $\det(\sqrt{-1}\partial\bar{\partial}\phi)=e^{f}$, we deduce that
\begin{equation}\label{Conical Proposition equation1}
[(\det(C\bar{C}^{T}))\Delta_{C\bar{C}^{T}}\phi]_{C^{\gamma,\beta}(B_{0}(1))}^{(2)}\leq\varepsilon [\sqrt{-1}\partial\bar{\partial}\phi]_{C^{\gamma,\beta}(B_{0}(1))}^{(2)}+[e^{f}]_{C^{\gamma,\beta}(B_{0}(1))}^{(2)}.
\end{equation}
By the conic Schauder estimate, we obtain
\begin{equation}\label{Conical Proposition equation2}
[\sqrt{-1}\partial\bar{\partial}\phi]_{C^{\gamma,\beta}(B_{0}(1))}^{(2)}\leq C_{\gamma}\left([(\det(C\bar{C}^{T}))\Delta_{C\bar{C}^{T}}\phi]_{C^{\gamma,\beta}(B_{0}(1))}^{(2)}+\|\phi\|_{L^{\infty}(B_{0}(1))}\right).
\end{equation}
Combining (\ref{Conical Proposition equation1}) and (\ref{Conical Proposition equation2}), we can get
\begin{equation*}
[\sqrt{-1}\partial\bar{\partial}\phi]_{C^{\gamma,\beta}(B_{0}(1))}^{(2)}\leq C_{\gamma}\varepsilon[\sqrt{-1}\partial\bar{\partial}\phi]_{C^{\gamma,\beta}(B_{0}(1))}^{(2)}+C_{\gamma}[e^{f}]_{C^{\gamma,\beta}(B_{0}(1))}^{(2)}+C_{\gamma}\|\phi\|_{L^{\infty}(B_{0}(1))},
\end{equation*}
where $C_{\gamma}$ depends only on $\lambda$, $\Lambda$, $n$, $\gamma$ and $\beta$. Taking $\varepsilon=\frac{1}{2C_{\gamma}}$, it is clear that
\begin{equation}\label{Conical Proposition equation3}
[\sqrt{-1}\partial\bar{\partial}\phi]_{C^{\gamma,\beta}(B_{0}(1))}^{(2)}\leq 2C_{\gamma}[e^{f}]_{C^{\gamma,\beta}(B_{0}(1))}^{(2)}+2C_{\gamma}\|\phi\|_{L^{\infty}(B_{0}(1))}.
\end{equation}
We know that $\delta_{C}$ depends on $\varepsilon$ and $\varepsilon$ depends on $\gamma$. Thus, we use $\varepsilon_{\gamma}$ to denote $\varepsilon$ and $\delta_{\gamma}$ to denote $\delta_{C}$. Since $C_{\gamma}$ is the constant in the conic Schauder estimate and $\alpha_{0}\in(0,\min(\frac{1}{\beta}-1,1))$, we have
\begin{equation*}
\lim_{\gamma\rightarrow\alpha_{0}}C_{\gamma}=C_{\alpha_{0}}<\infty,
\end{equation*}
which implies
\begin{equation*}
\lim_{\gamma\rightarrow\alpha_{0}}\varepsilon_{\gamma}=\varepsilon_{\alpha_{0}}>0~~and~~\lim_{\gamma\rightarrow\alpha_{0}}\delta_{\gamma}=\delta_{\alpha_{0}}>0.
\end{equation*}
For convenience, we still use $\delta_{C}$ to denote the limit $\delta_{\alpha_{0}}$. Then by letting $\gamma\rightarrow\alpha_{0}$ in (\ref{Conical Proposition equation3}), we get
\begin{equation*}
[\sqrt{-1}\partial\bar{\partial}\phi]_{C^{\alpha_{0},\beta}(B_{0}(1))}^{(2)}\leq 2C_{\alpha_{0}}[e^{f}]_{C^{\alpha_{0},\beta}(B_{0}(1))}^{(2)}+2C_{\alpha_{0}}\|\phi\|_{L^{\infty}(B_{0}(1))},
\end{equation*}
which implies $\phi\in C^{2,\alpha_{0},\beta}(B_{0}(\frac{1}{2}))$ and
\begin{equation*}
[\sqrt{-1}\partial\bar{\partial}\phi]_{C^{\alpha_{0},\beta}(B_{0}(\frac{1}{2}))}\leq C_{C}\left([e^{f}]_{C^{\alpha_{0},\beta}(B_{0}(1))}+\|\phi\|_{L^{\infty}(B_{0}(1))}\right).
\end{equation*}
\end{proof}

\section{More general elliptic equations}
In this section, we consider more general elliptic equations. First of all, let us recall the definition 3.2 in \cite{TWWY} first.
\begin{definition}
Let $\mathcal{F}_{n}(\lambda,\Lambda,K,\beta_{0})$ be a family of functions $\Phi:Sym(n)\times B_{0}(1)\rightarrow \mathbb{R}$ depending on positive constants $\lambda$, $\Lambda$, $K$ and $\beta_{0}\in(0,1)$. An element $\Phi\in\mathcal{F}_{n}(\lambda,\Lambda,K,\beta_{0})$ satisfies the following conditions:
\begin{enumerate}[$\bullet$]
    \item Fiberwise concavity. For each fixed $x\in B_{0}(1)$,
    \begin{equation*}
      \Phi\left(\frac{A+B}{2},x\right)\geq\frac{1}{2}\Phi(A,x)+\frac{1}{2}\Phi(B,x),
    \end{equation*}
    for all $A,B\in Sym(n)$.

    \item Uniform Ellipticity. For all $x\in B_{0}(1)$ and all $N,P\in Sym(n)$ with $P\geq0$ we have
    \begin{equation*}
    \lambda\|P\|\leq\Phi(N+P,x)-\Phi(N,x)\leq\Lambda\|P\|.
    \end{equation*}

    \item H\"{o}lder bound in $x$. For all $x,y\in B_{0}(1)$ and all $N\in Sym(n)$,
    \begin{equation*}
    \frac{|\Phi(N,x)-\Phi(N,y)|}{\|N\|+1}\leq K|x-y|^{\beta_{0}}.
    \end{equation*}
\end{enumerate}
\end{definition}
Next, we recall the Evans-Krylov theorem for $\Phi\in\mathcal{F}_{n}(\lambda,\Lambda,K,\beta_{0})$ (see \cite{Caff,TWWY}).
\begin{theorem}\label{TWWY Evans-Krylov Theorem}
Assume that $\Phi\in\mathcal{F}_{n}(\lambda,\Lambda,K,\beta_{0})$ and $f\in C^{\alpha_{0}}(B_{0}(1))$. If $u\in C^{0}(B_{0}(1))$ is a viscosity solution of the equation
\begin{equation*}
\Phi(D^{2}u(x),x)=f(x)~~in~B_{0}(1).
\end{equation*}
Then $u\in C^{2,\alpha}(B_{0}(\frac{1}{2}))$ and
\begin{equation*}
\|u\|_{C^{2,\alpha}(B_{0}(\frac{1}{2}))}\leq C,
\end{equation*}
where $\alpha$ and C depend only on $K$, $\alpha_{0}$, $\beta_{0}$, $n$, $\lambda$, $\Lambda$, $\|f\|_{C^{\alpha_{0}}(B_{0}(1))}$, $\|u\|_{L^{\infty}(B_{0}(1))}$ and $\Phi(0,0)$.
\end{theorem}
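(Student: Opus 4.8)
The plan is to combine the classical interior Evans--Krylov estimate with Caffarelli's perturbation (compactness) method, as in \cite{Caff}; the wrinkles relative to the textbook case are that $u$ is only a viscosity solution, that $\Phi$ depends on $x$ in a merely H\"older fashion, and that $f$ is only $C^{\alpha_0}$. After subtracting the constant $\Phi(0,0)$ from $\Phi$ and $f$ (this is where the dependence on $\Phi(0,0)$ enters) and rescaling in the standard way, we may assume $\lambda,\Lambda,K$, $\|f\|_{C^{\alpha_0}(B_0(1))}$ and $\|u\|_{L^{\infty}(B_0(1))}$ are all of unit size. Since $\Phi\in\mathcal{F}_n(\lambda,\Lambda,K,\beta_0)$ is incrementally uniformly elliptic, $u$ satisfies, in the viscosity sense, the Pucci bounds $\mathcal{M}^{-}_{\lambda,\Lambda}(D^2u)\le f(x)-\Phi(0,x)\le\mathcal{M}^{+}_{\lambda,\Lambda}(D^2u)$, whose right hand side is bounded by a constant depending only on $n,\lambda,\Lambda,K,\|f\|_{C^{\alpha_0}}$; the Krylov--Safonov theory then gives $u\in C^{1,\bar\alpha}_{\mathrm{loc}}$ with an a priori bound, and this is the regularity we will bootstrap. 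To make the arguments below rigorous for viscosity solutions one replaces $u$ by its sup-- and inf--convolutions (semiconvex/semiconcave, hence twice differentiable a.e., and viscosity sub/supersolutions of the equation with an $O(\varepsilon)$ error), or equivalently approximates by smooth solutions of mollified equations; the regularization error is sent to zero at the end.

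The first main ingredient is the classical Evans--Krylov theorem for the \emph{frozen, homogeneous} equation: if $\Phi(\cdot,x_0)$ is uniformly elliptic and concave on $Sym(n)$ and $v$ solves $\Phi(D^2v,x_0)=c$ on a ball, then $v\in C^{2,\alpha_1}$ with $\alpha_1=\alpha_1(n,\lambda,\Lambda)\in(0,1)$ and a scale-invariant estimate. This is proved by the usual mechanism: for a unit direction $e$ and small $h$, the second difference quotient $\delta^2_{h,e}v$ is, thanks to the fiberwise concavity, a viscosity subsolution of a fixed uniformly elliptic linear operator $L=\Phi_{ij}(D^2v,x_0)\partial_{ij}$ with bounded measurable coefficients; hence $\sup_{B_r}\delta^2_{h,e}v-\delta^2_{h,e}v$ is a nonnegative supersolution of $L$, and applying the weak Harnack inequality over a finite net of directions together with the ``each nearly maximal direction remains nearly maximal on a set of definite measure'' covering argument yields $\operatorname{osc}_{B_{\theta r}}D^2v\le(1-\mu)\operatorname{osc}_{B_r}D^2v$ for fixed $\theta,\mu\in(0,1)$ depending only on $n,\lambda,\Lambda$, which iterates to the $C^{2,\alpha_1}$ bound.

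The second ingredient is the perturbation step. Fix $x_0$ and a small radius $r$ with $B_{x_0}(r)\subset B_0(1)$, and let $v$ solve the frozen homogeneous Dirichlet problem $\Phi(D^2v,x_0)=f(x_0)$ in $B_{x_0}(r)$ with $v=u$ on $\partial B_{x_0}(r)$. Then $u-v$ satisfies, in the viscosity sense, an equation whose right hand side is $f(x)-f(x_0)+\big(\Phi(D^2u(x),x_0)-\Phi(D^2u(x),x)\big)$, bounded by $C(r^{\alpha_0}+r^{\beta_0}(1+\|D^2u\|_{L^{\infty}(B_{x_0}(r))}))$; by the ABP estimate $\|u-v\|_{L^{\infty}(B_{x_0}(r))}\le Cr^2(r^{\alpha_0}+r^{\beta_0})$, the Hessian factor being absorbed since $\|D^2u\|$ is already controlled on the relevant scales by the inductive hypothesis. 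Feeding the $C^{2,\alpha_1}$ estimate for $v$ into this comparison shows that at scale $r$ the function $u$ is approximable by the quadratic polynomial $\ell_{x_0}+P_{x_0}$ coming from $v$ with error $o(r^2)$; running Caffarelli's dyadic iteration over the scales $\rho=\theta^k r$ produces a second--order Taylor expansion of $u$ at every point of $B_0(\tfrac12)$ with H\"older remainder of exponent $\alpha=\min(\alpha_0,\beta_0,\alpha_1)$, uniformly, i.e.\ $\|u\|_{C^{2,\alpha}(B_0(\frac12))}\le C$ with $C$ depending only on the listed data.

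I expect the principal difficulty to be the viscosity--solution bookkeeping: the second--difference-quotient argument for the frozen equation, the existence of the comparison solution $v$ together with the interior Evans--Krylov estimate applied to it, and the ABP bound for $u-v$ must all be carried out within the viscosity framework (sup/inf convolutions, Perron's method, the measurable--coefficients Krylov--Safonov theory), and the various $O(\varepsilon)$ regularization errors must be tracked so that they vanish in the limit. A secondary but essential point is the accounting of the frozen--coefficient error, which is genuinely lower order only because $\beta_0>0$ (and $\alpha_0>0$); this is exactly what caps the H\"older exponent at $\min(\alpha_0,\beta_0)$, further capped by the dimensional exponent $\alpha_1$, matching the claim that $\alpha$ depends only on $n,\lambda,\Lambda,\alpha_0,\beta_0,K$ and the bounds on $f$, $u$ and $\Phi(0,0)$.
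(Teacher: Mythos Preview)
The paper does not actually prove this theorem: it is merely \emph{recalled} from the literature, with the citation ``(see \cite{Caff,TWWY})'' immediately preceding the statement. So there is no ``paper's own proof'' to compare against. Your outline is the standard Caffarelli perturbation scheme from \cite{Caff} (frozen-coefficient Evans--Krylov plus ABP-based comparison plus dyadic iteration), which is exactly the approach of the cited references; in that sense your proposal is correct and matches what the paper is invoking.

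One caution on your perturbation step: the frozen-coefficient error $|\Phi(D^2u,x_0)-\Phi(D^2u,x)|\le K|x-x_0|^{\beta_0}(1+\|D^2u\|)$ carries the factor $\|D^2u\|$, and you wave this away with ``absorbed by the inductive hypothesis.'' This is indeed how Caffarelli's iteration handles it---after subtracting the approximating paraboloid at each dyadic step and rescaling, the residual function has unit $L^\infty$ norm and the accumulated Hessian is controlled by a geometric series---but the bookkeeping is not entirely trivial and is the one place a reader might ask you to be more explicit. The same remark applies to the existence and interior regularity of the comparison function $v$: you need $v\in C^{2,\alpha_1}$ \emph{up to a fixed interior portion} of $B_{x_0}(r)$, which requires either the $W^{2,p}$ theory for $v$ or the full viscosity Evans--Krylov machinery; again this is standard from \cite{Caff}, but worth flagging since $v$ is only continuous at the boundary.
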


The following Theorem is our result, which improves Theorem \ref{TWWY Evans-Krylov Theorem}.
\begin{theorem}\label{Evans-Krylov Theorem}
Assume that $\Phi\in\mathcal{F}_{n}(\lambda,\Lambda,K,\beta_{0})$ is of class $C^{1}$ in its domain and $f\in C^{\alpha_{0}}(B_{0}(1))$. If $u\in C^{0}(B_{0}(1))$ is a viscosity solution of the equation
\begin{equation*}
\Phi(D^{2}u(x),x)=f(x)~~in~B_{0}(1).
\end{equation*}
Then $u\in C^{2,\gamma_{0}}(B_{0}(\frac{1}{2}))$ and
\begin{equation*}
\|u\|_{C^{2,\gamma_{0}}(B_{0}(\frac{1}{2}))}\leq C,
\end{equation*}
where $\gamma_{0}=\min(\alpha_{0},\beta_{0})$ and C depends only on $K$, $\alpha_{0}$, $\beta_{0}$, $n$, $\lambda$, $\Lambda$, $\|f\|_{C^{\alpha_{0}}(B_{0}(1))}$, $\|u\|_{L^{\infty}(B_{0}(1))}$, $\Phi(0,0)$ and the moduli of continuity of $\Phi_{ij}$.
\end{theorem}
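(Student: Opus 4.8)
The plan is to run the argument of Theorem~\ref{PDE Theorem} in the simpler situation where there is no auxiliary data $S$ and $T$: the equation is $\Phi(D^{2}u(x),x)=f(x)$ directly, so we take $a_{kl}(H,q)=\Phi_{kl}(H,q)$ and set
\begin{equation*}
\Psi_{H,q}(M,x)=\Phi(M,x)-a_{kl}(H,q)m_{kl},\qquad H\in Sym(n),\ q\in B_{0}(1).
\end{equation*}
First I would pass from a viscosity solution to a classical one: Theorem~\ref{TWWY Evans-Krylov Theorem} gives $u\in C^{2,\alpha}(B_{0}(\frac{3}{4}))$ for some possibly small $\alpha$, with $\|u\|_{C^{2,\alpha}(B_{0}(3/4))}\le C_{1}$, hence $-LI_{n}\le D^{2}u(x)\le LI_{n}$ on $B_{0}(\frac{3}{4})$ for a controlled $L$. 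From here on $u$ is a genuine $C^{2,\alpha}$ solution and every computation below is classical.

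Next I would establish the analogues of Lemmas~\ref{uniformly elliptic}--\ref{local estimate}. (i) Differentiating the uniform ellipticity inequality $\lambda\|P\|\le\Phi(N+P,x)-\Phi(N,x)\le\Lambda\|P\|$, valid for $P\ge0$, at $P=0$ gives $\lambda|\xi|^{2}\le a_{ij}(H,q)\xi^{i}\xi^{j}\le\sqrt{n}\,\Lambda|\xi|^{2}$ for all $H$, $q$ and $\xi\in\mathbb{R}^{n}$, so the frozen operator is uniformly elliptic with constants depending only on $n$, $\lambda$, $\Lambda$. (ii) For a unit vector $e$ and small $h$, subtracting the equations at $x+he$ and $x$ and using the $C^{1}$ hypothesis to write $\Phi(D^{2}u(x+he),x+he)-\Phi(D^{2}u(x),x+he)=\tilde a_{ij}(x,h,e)\bigl(u_{ij}(x+he)-u_{ij}(x)\bigr)$ with $\tilde a_{ij}$ an integral of $\Phi_{ij}$ along the segment, we obtain a uniformly elliptic equation for $D_{e}^{h,\gamma_{0}}u=(u(\cdot+he)-u(\cdot))/h^{\gamma_{0}}$ whose right hand side is bounded in $L^{p}$ uniformly in $h$ and $e$; the key point is that $\gamma_{0}\le\alpha_{0}$ makes $h^{-\gamma_{0}}|f(x+he)-f(x)|\le[f]_{C^{\alpha_{0}}}h^{\alpha_{0}-\gamma_{0}}$ bounded, while $\gamma_{0}\le\beta_{0}$ makes $h^{-\gamma_{0}}|\Phi(D^{2}u(x),x+he)-\Phi(D^{2}u(x),x)|\le K(\sqrt{n}L+1)h^{\beta_{0}-\gamma_{0}}$ bounded. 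The $L^{p}$ estimates then bound $\|D_{e}^{h,\gamma_{0}}u\|_{W^{2,p}}$ uniformly, and Theorem~\ref{Lp regular theorem} yields $u\in C^{2,\gamma}(B_{0}(\frac{3}{4}))$ for every $\gamma\in(0,\gamma_{0})$. (iii) Writing $\Psi_{H,q}(N,x)-\Psi_{H,q}(\tilde N,y)$ as $[\Psi_{H,q}(N,x)-\Psi_{H,q}(N,y)]+[\Psi_{H,q}(N,y)-\Psi_{H,q}(\tilde N,y)]$: the first difference equals $\Phi(N,x)-\Phi(N,y)$, since the linear term does not depend on $x$, and is bounded by $K(\sqrt{n}L+1)|x-y|^{\beta_{0}}$ by the H\"older-in-$x$ condition; the second, by the mean value theorem, equals $(\Phi_{ij}(\bar N,y)-\Phi_{ij}(H,q))(N_{ij}-\tilde N_{ij})$ with $\bar N$ between $N$ and $\tilde N$, and this is $\le\varepsilon\|N-\tilde N\|$ once $\|\bar N-H\|+|y-q|<\delta$, where $\delta$ depends on $\varepsilon$ and the modulus of continuity of $\Phi_{ij}$ on the compact set $P[-L,L]\times\overline{B_{0}(3/4)}$. (iv) Freezing $H=D^{2}u(q)$ and using the $C^{2,\alpha}$ bound from Theorem~\ref{TWWY Evans-Krylov Theorem} to keep $x,y$ close to $q$, step (iii) turns the equation into $|f(x)-f(y)-a_{ij}(D^{2}u(q),q)(u_{ij}(x)-u_{ij}(y))|\le\varepsilon\|D^{2}u(x)-D^{2}u(y)\|+L_{0}|x-y|^{\beta_{0}}$; plugging this into the interior (weighted) Schauder estimate for the constant-coefficient operator $a_{ij}(D^{2}u(q),q)\partial_{ij}$ with ellipticity from (i), choosing $\varepsilon=1/(2C_{\gamma})$ to absorb the bad term, and then letting $\gamma\uparrow\gamma_{0}$ gives a fixed radius $r$ with $[D^{2}u]_{C^{\gamma_{0}}(B_{q}(r/2))}\le C$ whenever $B_{q}(r)\subset B_{0}(\frac{3}{4})$.

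Finally, the conclusion is the chaining argument from the proof of Theorem~\ref{PDE Theorem}: for $x,y\in B_{0}(\frac{1}{2})$ put $q=(x+y)/2$; if $|x-y|<r$ bound $|D^{2}u(x)-D^{2}u(y)|/|x-y|^{\gamma_{0}}$ by $[D^{2}u]_{C^{\gamma_{0}}(B_{q}(r/2))}$, and if $|x-y|\ge r$ bound it by $2r^{-\gamma_{0}}\|D^{2}u\|_{L^{\infty}(B_{0}(3/4))}$. Combining with the $C^{2,\alpha}$ bound of Theorem~\ref{TWWY Evans-Krylov Theorem} and interpolation gives $\|u\|_{C^{2,\gamma_{0}}(B_{0}(1/2))}\le C$.

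The heart of the matter, and the only genuinely delicate step, is the passage $\gamma\uparrow\gamma_{0}$ in (iv). It is legitimate precisely because $\gamma_{0}<1$: the interior Schauder constant $C_{\gamma}$ degenerates only as $\gamma\to1$, so $\lim_{\gamma\to\gamma_{0}}C_{\gamma}=C_{\gamma_{0}}<\infty$, which keeps the smallness threshold $\varepsilon_{\gamma}=1/(2C_{\gamma})$ and the associated ball radius $r_{\gamma}$ bounded away from $0$; without this one would only recover $C^{2,\gamma}$ for each $\gamma<\gamma_{0}$ with constants that could blow up, rather than the uniform estimate at the endpoint exponent $\gamma_{0}$. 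Note also that concavity of $\Phi$ enters only through Theorem~\ref{TWWY Evans-Krylov Theorem}, and that the $C^{1}$ hypothesis is exactly what makes both the difference-quotient equation in (ii) and the mean value estimate in (iii) available — this is why it must appear, beyond the hypotheses of Theorem~\ref{TWWY Evans-Krylov Theorem}.
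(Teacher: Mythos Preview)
Your proposal is correct and follows essentially the same route as the paper: it defines $G_{H,q}(M,x)=\Phi(M,x)-\Phi_{ij}(H,q)m_{ij}$ (your $\Psi_{H,q}$), invokes Theorem~\ref{TWWY Evans-Krylov Theorem} for the initial $C^{2,\alpha}$ bound, and then states that it suffices to prove the analogues of Lemma~\ref{matrix lemma} and Lemma~\ref{local estimate} (your steps (iii) and (iv)), with the regularity step (ii) and ellipticity check (i) implicit in the latter. Your added remarks on why the limit $\gamma\uparrow\gamma_{0}$ is legitimate and on the roles of concavity versus the $C^{1}$ hypothesis are accurate and match the paper's discussion.
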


Compared with Theorem \ref{TWWY Evans-Krylov Theorem}, our result is optimal regarding the H\"{o}lder exponent. And we can prove $u\in C^{2,\gamma_{0}}(B_{0}(\frac{1}{2}))$ without using the condition that $\Phi$ is concave. However, we need $\Phi$ to be $C^{1}$ in Theorem \ref{Evans-Krylov Theorem} and the constant $C$ also depends on the moduli of continuity of $\Phi_{ij}$.

We give a sketch of Theorem \ref{Evans-Krylov Theorem}. By Theorem \ref{TWWY Evans-Krylov Theorem}, we have the estimate of $D^{2}u$, which implies
\begin{equation*}
-LI_{2n}\leq D^{2}u(x)\leq LI_{2n}~~\forall x\in B_{0}(\frac{3}{4}),
\end{equation*}
where $L$ depends only on $\alpha_{0}$, $K$, $\beta_{0}$, $n$, $\lambda$, $\Lambda$, $\|f\|_{C^{\alpha_{0}}(B_{0}(1))}$, $\|u\|_{L^{\infty}(B_{0}(1))}$ and $\Phi(0,0)$. For any $H\in P[-L,L]$ and $q\in B_{0}(1)$, we define
\begin{equation*}
G_{H,q}(M,x)=\Phi(M,x)-\Phi_{ij}(H,q)m_{ij}.
\end{equation*}
Since the proof of Theorem \ref{Evans-Krylov Theorem} is very similar to the proof of Theorem \ref{PDE Theorem}, it suffices to prove the following lemmas. In fact, Lemma \ref{Evans-Krylov Lemma1} and Lemma \ref{Evans-Krylov Lemma2} are analogues of Lemma \ref{matrix lemma} and Lemma \ref{local estimate}, we can prove them by the same argument.
\begin{lemma}\label{Evans-Krylov Lemma1}
For any $\varepsilon>0$, there exists a positive number $\delta$ depending only on $\varepsilon$ and the moduli of continuity of $\Phi_{ij}$, with the following property. For any $H, N, \tilde{N}\in P[-L,L]$ and $q, x, y\in \overline{B_{0}(\frac{3}{4})}$, if
\begin{equation*}
\|N-H\|+\|\tilde{N}-H\|+|x-q|+|y-q|<\delta,
\end{equation*}
then we have
\begin{equation*}
|G_{H,q}(N,x)-G_{H,q}(\tilde{N},y)|\leq\varepsilon\|N-\tilde{N}\|+K(L+1)|x-y|^{\beta_{0}}.
\end{equation*}
\end{lemma}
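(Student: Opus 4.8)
The plan is to specialize the proof of Lemma \ref{matrix lemma} to the present, simpler situation, in which the auxiliary maps $S$ and $T$ are absent and $\Phi$ is applied directly to the Hessian. I would split the difference through the intermediate point $(N,y)$, writing $G_{H,q}(N,x)-G_{H,q}(\tilde N,y)$ as the sum of $G_{H,q}(N,x)-G_{H,q}(N,y)$ and $G_{H,q}(N,y)-G_{H,q}(\tilde N,y)$, and then bound these two terms separately: the first is the variation in the point variable, the second the variation in the matrix variable.

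For the first term, note that the linear correction $\Phi_{ij}(H,q)m_{ij}$ in the definition of $G_{H,q}$ does not depend on $x$, so $G_{H,q}(N,x)-G_{H,q}(N,y)=\Phi(N,x)-\Phi(N,y)$, and the H\"{o}lder-in-$x$ clause in the definition of $\mathcal{F}_n(\lambda,\Lambda,K,\beta_0)$ gives $|\Phi(N,x)-\Phi(N,y)|\le K(\|N\|+1)|x-y|^{\beta_0}$. Since $\|N\|\le L$ for $N\in P[-L,L]$, this is bounded by $K(L+1)|x-y|^{\beta_0}$, which is exactly the second summand of the asserted inequality; in contrast to Lemma \ref{matrix lemma}, there are no further contributions here, so no constants involving $n$ or $\Lambda$ enter at this stage.

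For the second term I would use that $\Phi$, hence $G_{H,q}$, is $C^1$ in the matrix variable, with $(G_{H,q})_{ij}(M,y)=\Phi_{ij}(M,y)-\Phi_{ij}(H,q)$. Applying the mean value theorem along the segment from $N$ to $\tilde N$ — which stays inside $P[-L,L]$ since this set is convex, being $\{\,M:-LI\le M\le LI\,\}$ — bounds $|G_{H,q}(N,y)-G_{H,q}(\tilde N,y)|$ by $\|\Phi_{ij}(\bar N,y)-\Phi_{ij}(H,q)\|\,\|N-\tilde N\|$ for some $\bar N$ on that segment, by Cauchy--Schwarz in the matrix entries, exactly as in Lemma \ref{matrix lemma}. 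If $\|N-H\|+\|\tilde N-H\|+|x-q|+|y-q|<\delta$, then $\|\bar N-H\|<\delta$ and $|y-q|<\delta$, so by choosing $\delta$ small in terms of $\varepsilon$ and the uniform modulus of continuity of $\Phi_{ij}$ on the compact set $P[-L,L]\times\overline{B_0(\frac{3}{4})}$, the prefactor is $<\varepsilon$. The triangle inequality then combines the two estimates into the claimed bound.

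Since each step is an essentially verbatim specialization of a step in the proof of Lemma \ref{matrix lemma}, I do not anticipate a genuine obstacle. The only points requiring care are that the intermediate matrix $\bar N$ supplied by the mean value theorem remains in the region $P[-L,L]$ where $\Phi_{ij}$ is controlled (guaranteed by convexity of that region), and that the modulus of continuity invoked is the uniform one on the relevant compact set, so that $\delta$ depends only on $\varepsilon$ and that modulus — which is precisely the dependence claimed and explains why the constant $C$ in Theorem \ref{Evans-Krylov Theorem} depends on the moduli of continuity of $\Phi_{ij}$. Note that the concavity clause in the definition of $\mathcal{F}_n$ plays no role in this argument.
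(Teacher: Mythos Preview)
Your proposal is correct and follows exactly the route the paper intends: it explicitly says Lemma \ref{Evans-Krylov Lemma1} is the analogue of Lemma \ref{matrix lemma} and is proved ``by the same argument,'' namely splitting through $(N,y)$, using the H\"older-in-$x$ clause of $\mathcal{F}_n(\lambda,\Lambda,K,\beta_0)$ for the first piece and the mean value theorem together with the modulus of continuity of $\Phi_{ij}$ for the second. One minor remark: with the Frobenius norm defined in the paper, $N\in P[-L,L]$ gives $\|N\|\le \sqrt{n}\,L$ rather than $\|N\|\le L$, so the constant in the $|x-y|^{\beta_0}$ term is really $K(\sqrt{n}\,L+1)$; this is harmless for the application and the paper's own stated constant $K(L+1)$ has the same slip.
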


\begin{lemma}\label{Evans-Krylov Lemma2}
Under the assumptions of Theorem \ref{Evans-Krylov Theorem}, there exists a positive number $r$ such that for any $B_{q}(r)\subset B_{0}(\frac{3}{4})$, we have the following estimate
\begin{equation*}
[D^{2}u]_{C^{\gamma_{0}}(B_{q}(\frac{r}{2}))}\leq C,
\end{equation*}
where $r$ and $C$ depend only on $\alpha_{0}$, $K$, $\beta_{0}$, $n$, $\lambda$, $\Lambda$, $\|f\|_{C^{\alpha_{0}}(B_{0}(1))}$, $\|u\|_{L^{\infty}(B_{0}(1))}$, $\Phi(0,0)$ and the moduli of continuity of $\Phi_{ij}$.
\end{lemma}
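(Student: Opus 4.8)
The plan is to follow, essentially line by line, the proof of Lemma \ref{local estimate}, with Theorem \ref{TWWY Evans-Krylov Theorem} in the role of Theorem \ref{TWWY PDE Theorem} and $G_{H,q}$ in the role of $\Psi_{H,q}$. In outline: first upgrade the viscosity solution to a classical $C^{2,\alpha}$ solution and read off that $D^{2}u$ takes values in $P[-L,L]$; next establish the \emph{qualitative} regularity $u\in C^{2,\gamma}(B_{0}(\frac{3}{4}))$ for every $\gamma\in(0,\gamma_{0})$ (the analogue of Lemma \ref{regularity proposition}, which is not stated separately in this section); then freeze the coefficients of the equation at a point $q$, apply Lemma \ref{Evans-Krylov Lemma1} together with the constant-coefficient Schauder estimate on a small ball to get a $C^{\gamma}$ bound for $D^{2}u$ whose bad term can be absorbed; and finally let $\gamma\to\gamma_{0}$.

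For the first step I would invoke Theorem \ref{TWWY Evans-Krylov Theorem} to obtain $\|u\|_{C^{2,\alpha}(B_{0}(\frac{3}{4}))}\leq C_{1}$ for some (possibly very small) $\alpha$, with $C_{1}$ depending only on $\alpha_{0}$, $K$, $\beta_{0}$, $n$, $\lambda$, $\Lambda$, $\|f\|_{C^{\alpha_{0}}(B_{0}(1))}$, $\|u\|_{L^{\infty}(B_{0}(1))}$ and $\Phi(0,0)$; in particular $D^{2}u$ takes values in $P[-L,L]$ on $B_{0}(\frac{3}{4})$, so $u$ is a classical solution there. For the second step, since $\Phi$ is $C^{1}$, I subtract $\Phi(D^{2}u(x),x)=f(x)$ from $\Phi(D^{2}u(x+he),x+he)=f(x+he)$ for a unit vector $e$ and small $h$, and apply the mean value theorem along the segment joining $D^{2}u(x)$ and $D^{2}u(x+he)$; this shows that $D_{e}^{h,\gamma_{0}}u$ solves a uniformly elliptic linear equation $\tilde{a}_{ij}(x,h,e)(D_{e}^{h,\gamma_{0}}u)_{ij}=\tilde{f}(x,h,e)$, where $\tilde{a}_{ij}(x,h,e)=\int_{0}^{1}\Phi_{ij}(tD^{2}u(x+he)+(1-t)D^{2}u(x),x+he)\,dt$ and $\tilde{f}$ collects the difference quotients of $f$ and of $\Phi$ in its $x$-slot, controlled by $f\in C^{\alpha_{0}}$, the H\"older-in-$x$ bound on $\Phi$ and $\|D^{2}u\|_{L^{\infty}}$. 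The ellipticity constants and the moduli of continuity of $\tilde{a}_{ij}$ are independent of $h$ and $e$ (using the uniform $C^{2,\alpha}$ bound on $u$ and the continuity of $\Phi_{ij}$), so $L^{p}$ estimates give a bound on $\|D_{e}^{h,\gamma_{0}}u\|_{W^{2,p}(B_{0}(\frac{1}{4}))}$ uniform in $h,e$, and Theorem \ref{Lp regular theorem} together with a covering argument yields $u\in C^{2,\gamma}(B_{0}(\frac{3}{4}))$ for $\gamma\in(0,\gamma_{0})$; the constant here may depend on $u$, but only the finiteness of the resulting seminorms will be used.

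For the remaining steps I would fix $q$ with $B_{q}(r)\subset B_{0}(\frac{3}{4})$ and note $\sup_{x,y\in B_{q}(r)}\|D^{2}u(x)-D^{2}u(y)\|\leq 2^{\alpha}r^{\alpha}C_{1}$; given $\varepsilon>0$, Lemma \ref{Evans-Krylov Lemma1} supplies $\delta$, and shrinking $r$ (uniformly in $q$) so that $\|D^{2}u(x)-D^{2}u(q)\|+\|D^{2}u(y)-D^{2}u(q)\|+|x-q|+|y-q|<\delta$ on $B_{q}(r)$ gives $|G_{D^{2}u(q),q}(D^{2}u(x),x)-G_{D^{2}u(q),q}(D^{2}u(y),y)|\leq\varepsilon\|D^{2}u(x)-D^{2}u(y)\|+K(L+1)|x-y|^{\beta_{0}}$. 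Since $G_{D^{2}u(q),q}(D^{2}u(\cdot),\cdot)=f(\cdot)-\Phi_{ij}(D^{2}u(q),q)u_{ij}(\cdot)$, this is an oscillation bound for $f-\Phi_{ij}(D^{2}u(q),q)u_{ij}$; converting to $C^{\gamma}$-seminorms (finite by the previous step) gives $[\Phi_{ij}(D^{2}u(q),q)u_{ij}]_{C^{\gamma}(B_{q}(r))}^{(2)}\leq\varepsilon[D^{2}u]_{C^{\gamma}(B_{q}(r))}^{(2)}+[f]_{C^{\gamma}(B_{q}(r))}^{(2)}+2K(L+1)$. Because the uniform ellipticity of $\Phi$ forces $\lambda I_{n}\leq(\Phi_{ij}(D^{2}u(q),q))\leq\Lambda I_{n}$ (the analogue of Lemma \ref{uniformly elliptic}, here immediate), the constant-coefficient Schauder estimate yields $[D^{2}u]_{C^{\gamma}(B_{q}(r))}^{(2)}\leq C_{\gamma}([\Phi_{ij}(D^{2}u(q),q)u_{ij}]_{C^{\gamma}(B_{q}(r))}^{(2)}+\|u\|_{L^{\infty}(B_{q}(r))})$ with $C_{\gamma}$ depending only on $n,\gamma,\lambda,\Lambda$; choosing $\varepsilon=\frac{1}{2C_{\gamma}}$ absorbs the bad term, and since $C_{\gamma}\to C_{\gamma_{0}}<\infty$ as $\gamma\to\gamma_{0}$ (so that $\varepsilon_{\gamma}\to\varepsilon_{\gamma_{0}}>0$ and $r_{\gamma}\to r_{\gamma_{0}}>0$) we may fix $r=r_{\gamma_{0}}$ and let $\gamma\to\gamma_{0}$ to conclude $u\in C^{2,\gamma_{0}}(B_{q}(\frac{r}{2}))$ and $[D^{2}u]_{C^{\gamma_{0}}(B_{q}(\frac{r}{2}))}\leq C$ with the stated dependence.

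I expect the main obstacle to be the second step: one must arrange the difference-quotient/$L^{p}$ bootstrap so that the linearized equation for $D_{e}^{h,\gamma_{0}}u$ has ellipticity bounds and coefficient moduli of continuity that are genuinely uniform in $h$ and $e$. This is exactly what is needed for the qualitative $C^{2,\gamma}$ regularity, and it is crucial that this regularity enters only to guarantee that the H\"older seminorms in the Schauder step are a priori finite, while every quantitative constant in the final bound comes from the explicit constant-coefficient Schauder and interpolation estimates and from the data listed in the statement.
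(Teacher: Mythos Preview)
Your proposal is correct and follows essentially the same approach as the paper, which explicitly states that Lemma \ref{Evans-Krylov Lemma2} is proved ``by the same argument'' as Lemma \ref{local estimate}; you have faithfully transcribed that argument with $G_{H,q}$ in place of $\Psi_{H,q}$, Theorem \ref{TWWY Evans-Krylov Theorem} in place of Theorem \ref{TWWY PDE Theorem}, and the obvious simplification that the ellipticity of $(\Phi_{ij})$ is immediate. Your inclusion of the qualitative $C^{2,\gamma}$ bootstrap (the analogue of Lemma \ref{regularity proposition}, not separately stated in Section 4) is exactly what the paper implicitly relies on, and your caveat that its constant may depend on $u$ but only finiteness is used matches the paper's own remark.
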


\section{Parabolic equations}
In this section, we consider parabolic equations. Let us recall the standard conventions first. Let $D$ and $D_{t}$ represent the partial differentiation with respect to $x$-variables and $t$-variables respectively. Let $Q_{(x,t)}(r)$ represent the domain $B_{x}(r)\times(t-r^{2},t]$. A function $u$ defined on $Q_{(0,0)}(1)$ is said to be $C^{k,\alpha}$ with $k$ being even if
\begin{equation*}
\|u\|_{C^{k,\alpha},Q_{(0,0)}(1)}:=\sum_{0\leq i+2j\leq k}\|D^{i}D_{t}^{j}u\|_{L^{\infty},Q_{(0,0)}(1)}+\|D^{k}u\|_{C^{\alpha},Q_{(0,0)}(1)}+\|D_{t}^{\frac{k}{2}}u\|_{C^{\frac{\alpha}{2}},Q_{(0,0)}(1)}<\infty.
\end{equation*}

By the same method (in Section 2 of \cite{TWWY}), we can use the parabolic version of Theorem \ref{PDE Theorem} to establish the parabolic version of Theorem \ref{Main Theorem}. Thus, we need to state the parabolic version of Theorem \ref{PDE Theorem}.

We consider parabolic equation of the form
\begin{equation*}
u_{t}(x,t)-F\left(S(x,t)+T(D^{2}u(x,t),x,t),x,t\right)=f(x,t)~~in~Q_{(0,0)}(1),
\end{equation*}
where $f\in C^{\alpha_{0}}(Q_{(0,0)}(1))$ and
\begin{equation*}
\begin{split}
&F:Sym(2n)\times Q_{(0,0)}(1)\rightarrow \mathbb{R};\\
&S:Q_{(0,0)}(1)\rightarrow Sym(2n);\\
&T:Sym(2n)\times Q_{(0,0)}(1)\rightarrow Sym(2n).
\end{split}
\end{equation*}
We assume that there exists a compact convex set $\mathcal{E}\subset Sym(2n)$, positive constants $\lambda$, $\Lambda$, $K$ and $\beta_{0}\in(0,1)$ such that the following hold.
\begin{enumerate}[\textbf{H}1:]
    \item  $F$ is of class $C^{1}$ in $U\times Q_{(0,0)}(1)$ where $U$ is a neighborhood of $\mathcal{E}$ and
           \begin{enumerate}[(1)]
           \item $F$ is uniformly elliptic in $\mathcal{E}$:
                 \begin{equation*}
                 \lambda|\xi|^{2}\leq F_{ij}(M,x,t)\xi^{i}\xi^{j}\leq\Lambda|\xi|^{2},
                 \end{equation*}
                 for all $M\in\mathcal{E}$, $(x,t)\in Q_{(0,0)}(1)$ and $\xi\in\mathbb{R}^{2n}$, where $F_{ij}(M,x,t)=\frac{\partial F}{\partial m_{ij}}(M,x,t)$.
           \item $F$ is concave in $\mathcal{E}$:
                 \begin{equation*}
                 F(\frac{A+B}{2},x,t)\geq\frac{1}{2}F(A,x,t)+\frac{1}{2}F(B,x,t),
                 \end{equation*}
                 for all $A,B\in\mathcal{E}$ and $(x,t)\in Q_{(0,0)}(1)$.
           \item $F$ has the following uniform H\"{o}lder bound in $(x,t)$:
                 \begin{equation*}
                 |F(N,x,t_{x})-F(N,y,t_{y})|\leq K|x-y|^{\beta_{0}}+K|t_{x}-t_{y}|^{\frac{\beta_{0}}{2}}~~and~~|F(N,0,0)|\leq K,
                 \end{equation*}
                 for all $N\in\mathcal{E}$ and $(x,t_{x}),(y,t_{y})\in Q_{(0,0)}(1)$.
           \end{enumerate}

    \item  The map $T:Sym(2n)\times Q_{(0,0)}(1)\rightarrow Sym(2n)$ satisfies the following conditions:
           \begin{enumerate}[(1)]
           \item For all $(x,t_{x}),(y,t_{y})\in Q_{(0,0)}(1)$ and all $N\in Sym(2n)$,
                 \begin{equation*}
                 \frac{\|T(N,x,t_{x})-T(N,y,t_{y})\|}{\|N\|+1}\leq K|x-y|^{\beta_{0}}+K|t_{x}-t_{y}|^{\frac{\beta_{0}}{2}}.
                 \end{equation*}
           \item For each fixed $(x,t)\in Q_{(0,0)}(1)$, the map $M\mapsto T(M,x,t)$ is linear on $Sym(2n)$. For convenience, we
                 assume $T_{ij}(M,x,t)=T_{ij,kl}(x,t)m_{kl}$.
           \item For all $P\geq0$ and $(x,t)\in Q_{(0,0)}(1)$,
                 \begin{equation*}
                 T(P,x,t)\geq0~~and~~K^{-1}\|P\|\leq\|T(P,x,t)\|\leq K\|P\|.
                 \end{equation*}
           \end{enumerate}

    \item  $S:Q_{(0,0)}(1)\rightarrow Sym(2n)$ has a uniform $C^{\beta_{0}}$ bound:
           \begin{equation*}
           \|S(x,t_{x})-S(y,t_{y})\|\leq K|x-y|^{\beta_{0}}+K|t_{x}-t_{y}|^{\frac{\beta_{0}}{2}}~~and~~\|S(0,0)\|\leq K,
           \end{equation*}
           for all $(x,t_{x}),(y,t_{y})\in Q_{(0,0)}(1)$.
\end{enumerate}

The following Theorem is the parabolic version of Theorem \ref{PDE Theorem}.
\begin{theorem}\label{Parabolic PDE Theorem}
With the assumption above, suppose that $u\in C^{2}(Q_{(0,0)}(1))$ solves
\begin{equation*}
u_{t}(x,t)-F\left(S(x,t)+T(D^{2}u(x,t),x,t),x,t\right)=f(x,t)~~in~Q_{(0,0)}(1),
\end{equation*}
and satisfies
\begin{equation*}
S(x,t)+T(D^{2}u(x,t),x,t)\in\mathcal{E},~~\forall~(x,t)\in Q_{(0,0)}(1).
\end{equation*}
Then $u\in C^{2,\gamma_{0}}(Q_{(0,0)}(\frac{1}{2}))$ and
\begin{equation*}
\|u\|_{C^{2,\gamma_{0}}(Q_{(0,0)}(\frac{1}{2}))}\leq C,
\end{equation*}
where $\gamma_{0}=\min(\alpha_{0},\beta_{0})$ and C depends only on $\alpha_{0}$, $K$, $n$, $\Lambda$, $\lambda$, $\beta_{0}$, $\|f\|_{C^{\alpha_{0}}(Q_{(0,0)}(1))}$, $\|u\|_{L^{\infty}(Q_{(0,0)}(1))}$ and the moduli of continuity of $F_{ij}$.
\end{theorem}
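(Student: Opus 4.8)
The plan is to mirror exactly the elliptic argument that established Theorem~\ref{PDE Theorem}, adapting every ingredient to the parabolic setting with the standard parabolic scaling $|x-y| \sim |t_x - t_y|^{1/2}$. First I would invoke the parabolic analogue of Theorem~\ref{TWWY PDE Theorem} (the Tosatti--Wang--Weinkove--Yang parabolic estimate, which holds under exactly the hypotheses \textbf{H}1--\textbf{H}3 above) to obtain $\|u\|_{C^{2,\alpha}(Q_{(0,0)}(3/4))} \leq C_1$ for some possibly tiny $\alpha > 0$; in particular $-LI_{2n} \leq D^2 u \leq LI_{2n}$ on $Q_{(0,0)}(3/4)$, with $L$ depending only on the listed data. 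Then, for $H \in P[-L,L] \cap \mathcal{E}$ and $q \in Q_{(0,0)}(1)$, set
\begin{equation*}
\Psi_{H,q}(M,x,t) = F\left(S(x,t) + T(M,x,t), x, t\right) - a_{kl}(H,q)m_{kl},
\end{equation*}
with $a_{kl}(H,q) = F_{ij}(S(q)+T(H,q),q)T_{ij,kl}(q)$, exactly as before. Lemma~\ref{uniformly elliptic} carries over verbatim (it is purely algebraic), giving $K^{-1}\lambda I_{2n} \leq (a_{ij}(H,q)) \leq \sqrt{2n}K\Lambda I_{2n}$.

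Next I would prove the parabolic analogue of Lemma~\ref{regularity proposition}: differentiating the equation along a difference quotient $D_e^{h,\gamma_0}u(x,t) = (u(x+he,t) - u(x,t))/h^{\gamma_0}$ (and separately a time-difference quotient) and subtracting, one obtains that $D_e^{h,\gamma_0}u$ solves a uniformly parabolic linear equation whose coefficients, forcing term, and their moduli of continuity are bounded independently of $h$ and $e$ thanks to \textbf{H}1--\textbf{H}3. Parabolic $L^p$ estimates then bound $\|D_e^{h,\gamma_0}u\|_{W^{2,1}_p(Q_{(0,0)}(1/4))}$ uniformly, and the parabolic embedding $W^{2,1}_p \hookrightarrow C^{1,\gamma}$ (for $p$ large), combined with a covering argument, yields $u \in C^{2,\gamma}(Q_{(0,0)}(3/4))$ for every $\gamma \in (0,\gamma_0)$. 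The parabolic analogue of Lemma~\ref{matrix lemma} is then proved just as in the elliptic case: the only new point is that the H\"older-in-$(x,t)$ hypotheses in \textbf{H}1(3), \textbf{H}2(1), \textbf{H}3 now include the $|t_x - t_y|^{\beta_0/2}$ term, so the conclusion reads
\begin{equation*}
|\Psi_{H,q}(N,x,t_x) - \Psi_{H,q}(\tilde{N},y,t_y)| \leq \varepsilon\|N-\tilde{N}\| + K(1+4n\Lambda+2n\Lambda L)\left(|x-y|^{\beta_0} + |t_x-t_y|^{\frac{\beta_0}{2}}\right),
\end{equation*}
with $\delta$ depending on $\varepsilon$, $n$, $\Lambda$, $K$, $L$, $\beta_0$ and the modulus of continuity of $F_{ij}$ via the modulus of $a_{ij}$.

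Finally I would run the parabolic analogue of Lemma~\ref{local estimate}. Using the $C^{2,\alpha}$ bound from the parabolic TWWY theorem, shrink the parabolic cylinder $Q_q(r)$ so that $D^2 u$ stays within $\delta$ of $D^2 u(q)$ on $Q_q(r)$; the matrix lemma then turns the equation into the frozen-coefficient inequality
\begin{equation*}
\big|f(x,t_x) - f(y,t_y) - a_{ij}(D^2u(q),q)(u_{ij}(x,t_x) - u_{ij}(y,t_y))\big| \leq \varepsilon\|D^2u(x,t_x) - D^2u(y,t_y)\| + L_0\left(|x-y|^{\beta_0} + |t_x-t_y|^{\frac{\beta_0}{2}}\right).
\end{equation*}
Applying the parabolic Schauder estimate (with constant $C_\gamma$ depending only on $n$, $\gamma$, $K^{-1}\lambda$, $\sqrt{2n}K\Lambda$) to the constant-coefficient parabolic operator $\partial_t - a_{ij}(D^2u(q),q)\partial_{ij}$, absorbing the $\varepsilon C_\gamma [D^2u]^{(2)}_{C^\gamma}$ term by choosing $\varepsilon = 1/(2C_\gamma)$, and then letting $\gamma \to \gamma_0$ (using $C_\gamma \to C_{\gamma_0} < \infty$ since $\gamma_0 < 1$, so that the radius $r_\gamma$ stays bounded below), one gets $[D^2u]_{C^{\gamma_0}(Q_q(r/2))} \leq C$ with $r$, $C$ depending only on the stated data. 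The proof of Theorem~\ref{Parabolic PDE Theorem} is then completed exactly as the proof of Theorem~\ref{PDE Theorem}: take $x,y \in Q_{(0,0)}(1/2)$, let $q$ be their parabolic midpoint, and split into the cases of small and large parabolic distance, closing with interpolation.

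I expect the main obstacle to be purely technical bookkeeping rather than conceptual: one must be careful that the parabolic $L^p$/Schauder theory is applied with the correct anisotropic norms (the $D_t^{k/2}u \in C^{\alpha/2}$ piece), that the time-difference quotient equation is handled in parallel with the space-difference quotients so that the full $C^{2,\gamma_0}$ norm — including $D_t u$ in $C^{\gamma_0/2}$ — is controlled, and that the freezing/absorption step is legitimate for the parabolic operator. None of these present a genuine difficulty since the structural hypotheses \textbf{H}1--\textbf{H}3 are designed to make every estimate uniform in the difference-quotient parameters, and the parabolic Schauder and $L^p$ theories for constant- and variable-coefficient operators are classical. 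Hence the entire elliptic scheme transfers, and I will only indicate the modifications, referring to the proof of Theorem~\ref{PDE Theorem} for the steps that are identical.
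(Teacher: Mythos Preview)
Your proposal is correct and follows exactly the approach the paper takes: the paper states that the proof of Theorem~\ref{Parabolic PDE Theorem} is ``similar to [its] elliptic version'' and omits it, referring to \cite{Lie,Wang} for the parabolic Schauder and $L^p$ estimates needed along the way. Your outline---invoking the parabolic TWWY theorem for an initial $C^{2,\alpha}$ bound, then running parabolic analogues of Lemmas~\ref{uniformly elliptic}--\ref{local estimate} with the anisotropic $|x-y|^{\beta_0}+|t_x-t_y|^{\beta_0/2}$ scaling, and closing by the same small/large distance dichotomy---is precisely this transfer, and the technical caveats you flag (handling the $D_t u$ piece via time-difference quotients, using the correct parabolic norms) are the only points requiring care.
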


Next, let us consider more general uniformly parabolic equations. The following definition is the parabolic version of definition 3.2 in \cite{TWWY}.
\begin{definition}
Let $\mathcal{F}_{n}(\lambda,\Lambda,K,\beta_{0})$ be a family of functions $\Phi:Sym(n)\times Q_{(0,0)}(1)\rightarrow \mathbb{R}$ depending on positive constants $\lambda$, $\Lambda$, $K$ and $\beta_{0}\in(0,1)$. An element $\Phi\in\mathcal{F}_{n}(\lambda,\Lambda,K,\beta_{0})$ satisfies the following conditions:
\begin{enumerate}[$\bullet$]
    \item Fiberwise concavity. For each fixed $x\in Q_{(0,0)}(1)$,
    \begin{equation*}
      \Phi\left(\frac{A+B}{2},x,t\right)\geq\frac{1}{2}\Phi(A,x,t)+\frac{1}{2}\Phi(B,x,t),
    \end{equation*}
    for all $A,B\in Sym(n)$.

    \item Uniform Ellipticity. For all $x\in Q_{(0,0)}(1)$ and all $N,P\in Sym(n)$ with $P\geq0$ we have
    \begin{equation*}
    \lambda\|P\|\leq\Phi(N+P,x,t)-\Phi(N,x,t)\leq\Lambda\|P\|.
    \end{equation*}

    \item H\"{o}lder bound in $(x,t)$. For all $(x,t_{x}),(y,t_{y})\in Q_{(0,0)}(1)$ and all $N\in Sym(n)$,
    \begin{equation*}
    \frac{|\Phi(N,x,t_{x})-\Phi(N,y,t_{y})|}{\|N\|+1}\leq K|x-y|^{\beta_{0}}+K|t_{x}-t_{y}|^{\frac{\beta_{0}}{2}}.
    \end{equation*}
\end{enumerate}
\end{definition}

The following Theorem is the parabolic version of Theorem \ref{Evans-Krylov Theorem}.
\begin{theorem}\label{Parabolic Evans-Krylov Theorem}
Assume that $\Phi\in\mathcal{F}_{n}(\lambda,\Lambda,K,\beta_{0})$ is of class $C^{1}$ in its domain and $f\in C^{\alpha_{0}}(Q_{(0,0)}(1))$. If $u\in C^{0}(Q_{(0,0)}(1))$ is a viscosity solution of the equation
\begin{equation*}
u_{t}(x,t)-\Phi(D^{2}u(x,t),x,t)=f(x,t)~~in~Q_{(0,0)}(1).
\end{equation*}
Then $u\in C^{2,\gamma_{0}}(Q_{(0,0)}(\frac{1}{2}))$ and
\begin{equation*}
\|u\|_{C^{2,\gamma_{0}}(Q_{(0,0)}(\frac{1}{2}))}\leq C,
\end{equation*}
where $\gamma_{0}=\min(\alpha_{0},\beta_{0})$ and C depends only on $K$, $\alpha_{0}$, $\beta_{0}$, $n$, $\lambda$, $\Lambda$, $\|f\|_{C^{\alpha_{0}}(Q_{(0,0)}(1))}$, $\|u\|_{L^{\infty}(Q_{(0,0)}(1))}$, $\Phi(0,0)$ and the moduli of continuity of $\Phi_{ij}$.
\end{theorem}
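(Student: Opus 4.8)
The plan is to follow exactly the strategy used for Theorem \ref{PDE Theorem} and Theorem \ref{Evans-Krylov Theorem}, but in the parabolic setting, using Theorem \ref{Parabolic PDE Theorem} in place of Theorem \ref{PDE Theorem}. First I would invoke the parabolic Evans--Krylov theorem of Tosatti--Wang--Weinkove--Yang (the parabolic version of Theorem \ref{TWWY Evans-Krylov Theorem}, which is available in \cite{Caff,TWWY}) to obtain an a priori $C^{2,\alpha}$ estimate for $u$ on $Q_{(0,0)}(\tfrac{3}{4})$ for some small $\alpha>0$. In particular this gives a two-sided bound $-LI_{2n}\le D^{2}u(x,t)\le LI_{2n}$ on $Q_{(0,0)}(\tfrac{3}{4})$, with $L$ depending only on the listed data; it also gives a Hölder bound on $D^2 u$ and on $D_t u$. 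Because $\Phi$ is assumed $C^1$ in its domain, the derivatives $\Phi_{ij}(M,x,t)=\partial\Phi/\partial m_{ij}$ are continuous on the compact set $P[-L,L]\times\overline{Q_{(0,0)}(\tfrac34)}$ and hence uniformly continuous there, which is exactly the ingredient needed for the freezing argument.

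Next I would set up the frozen operator $G_{H,q}(M,x,t)=\Phi(M,x,t)-\Phi_{ij}(H,q)m_{ij}$ for $H\in P[-L,L]$ and $q\in Q_{(0,0)}(\tfrac34)$, and prove the parabolic analogue of Lemma \ref{matrix lemma}: for every $\varepsilon>0$ there is $\delta=\delta(\varepsilon,\text{mod.\ cont.\ of }\Phi_{ij})$ so that if $\|N-H\|+\|\tilde N-H\|$ together with the parabolic distances $|x-q|+|y-q|$ and $|t_x-t_q|^{1/2}+|t_y-t_q|^{1/2}$ are all less than $\delta$, then $|G_{H,q}(N,x,t_x)-G_{H,q}(\tilde N,y,t_y)|\le \varepsilon\|N-\tilde N\|+K(L+1)(|x-y|^{\beta_0}+|t_x-t_y|^{\beta_0/2})$. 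The mean value theorem in the matrix variable produces the factor $|\Phi_{ij}(\bar N,x,t_x)-\Phi_{ij}(H,q)|$, which is $<\varepsilon$ once $\bar N$ and $(x,t_x)$ are $\delta$-close to $(H,q)$; the $(x,t)$-Hölder hypothesis on $\Phi$ handles the remaining term. This is the direct parabolic transcription of Lemma \ref{matrix lemma} (and of Lemma \ref{Evans-Krylov Lemma1}), using the parabolic weighting $|t_x-t_y|^{\beta_0/2}$ throughout, and no concavity of $\Phi$ is used.

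Then I would prove the parabolic analogue of Lemma \ref{local estimate} (i.e.\ of Lemma \ref{Evans-Krylov Lemma2}): using the $C^{2,\alpha}$ bound from Evans--Krylov to choose a radius $r$ small enough that $\sup_{Q_q(r)}\|D^2u - D^2u(q)\|$ is under the threshold $\delta$, the preceding lemma yields, for any point $(q,t_q)$ with $Q_{(q,t_q)}(r)\subset Q_{(0,0)}(\tfrac34)$,
\[
\big|\big(D_t u - f\big)(x,t_x)-\big(D_t u - f\big)(y,t_y) - \Phi_{ij}(D^2u(q),q)\big(u_{ij}(x,t_x)-u_{ij}(y,t_y)\big)\big|\le \varepsilon\|D^2u(x,t_x)-D^2u(y,t_y)\| + L_0\big(|x-y|^{\beta_0}+|t_x-t_y|^{\beta_0/2}\big),
\]
which says that, after moving the known $D_t u$ and $f$ pieces to the right, $u$ solves an equation whose right-hand side is $C^{\gamma_0}$ up to an $\varepsilon$-small multiple of the seminorm of $D^2u$ we are trying to bound. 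Since the frozen linear operator $\partial_t - \Phi_{ij}(D^2u(q),q)\,\partial_i\partial_j$ is uniformly parabolic with constants controlled by $\lambda,\Lambda,n$, the interior parabolic Schauder estimate gives $[D^2u]_{C^{\gamma}(Q_q(r/2))}\le C_\gamma\big(\varepsilon[D^2u]_{C^{\gamma}(Q_q(r))} + [f]_{C^\gamma} + L_0 + \|u\|_{L^\infty}\big)$ for $\gamma<\gamma_0$; choosing $\varepsilon=1/(2C_\gamma)$ absorbs the first term, and letting $\gamma\to\gamma_0$ (legitimate because $C_\gamma\to C_{\gamma_0}<\infty$ as $\gamma_0<1$) upgrades to the borderline exponent $\gamma_0=\min(\alpha_0,\beta_0)$. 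A covering and interpolation argument, exactly as in the proof of Theorem \ref{PDE Theorem}, patches the local $C^{2,\gamma_0}$ bounds into the global estimate on $Q_{(0,0)}(\tfrac12)$.

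The main obstacle is purely technical rather than conceptual: one must have on hand a clean statement of the interior parabolic Schauder estimate with the parabolic Hölder norms $\|D^k u\|_{C^\alpha}+\|D_t^{k/2}u\|_{C^{\alpha/2}}$ defined above, in the non-divergence form with merely continuous leading coefficients and with the estimate valid up to (but not at) the exponent $\gamma_0$ so that the $\gamma\to\gamma_0$ limit can be taken — together with the parabolic $L^p$ estimate needed for the preliminary step $u\in C^{2,\gamma}$ for all $\gamma<\gamma_0$ (the parabolic analogue of Lemma \ref{regularity proposition}, obtained by differencing the equation in a spatial direction and bootstrapping). These are standard (Lieberman, or Krylov), but care is needed to confirm that all constants depend only on $n,\lambda,\Lambda,\gamma$ and the moduli of continuity of the coefficients, so that the $\varepsilon$ and $r$ produced at the end have positive limits as $\gamma\to\gamma_0$, just as in the elliptic case.
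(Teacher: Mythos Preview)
Your proposal is correct and follows exactly the approach the paper intends: the paper omits the proof of Theorem \ref{Parabolic Evans-Krylov Theorem} entirely, saying only that it is the parabolic analogue of the elliptic argument (Lemmas \ref{Evans-Krylov Lemma1}--\ref{Evans-Krylov Lemma2}, which in turn copy Lemmas \ref{matrix lemma}--\ref{local estimate}) with the parabolic Schauder and $L^{p}$ estimates from \cite{Lie,Wang} substituted in. One cosmetic point: in your absorption step you should use the weighted interior seminorm $[D^{2}u]_{C^{\gamma}(Q_{q}(r))}^{(2)}$ on both sides (as in the paper's inequality (\ref{local estimate equation5})), rather than $Q_{q}(r/2)$ on the left and $Q_{q}(r)$ on the right, so that the $\varepsilon$-term can actually be subtracted off.
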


The regularities and estimates of solutions in Theorem \ref{Parabolic PDE Theorem} and Theorem \ref{Parabolic Evans-Krylov Theorem} are optimal regarding the H\"{o}lder exponent. Since the proofs of Theorem \ref{Parabolic PDE Theorem} and Theorem \ref{Parabolic Evans-Krylov Theorem} are similar to their elliptic versions, we omit them here. All the parabolic results we need in the proofs of Theorem \ref{Parabolic PDE Theorem} and Theorem \ref{Parabolic Evans-Krylov Theorem} (such as Schauder estimate, $L^{p}$ estimate and so on) can be found in \cite{Lie,Wang}.

\section{An application of Theorem \ref{Main Theorem}}
In this section, we give an application of Theorem \ref{Main Theorem}. Let $(M,J,\Omega)$ be a compact manifold of real dimension $2n$, where $J$ is an almost complex structure and $\Omega$ is a symplectic form taming $J$. We can define a Riemannian metric on $M$ by
\begin{equation*}
g_{\Omega}(X,Y)=\frac{1}{2}\left(\Omega(X,JY)+\Omega(Y,JX)\right).
\end{equation*}
Our result is as follows.
\begin{theorem}\label{Calabi-Yau equation theorem}
Let $\tilde{\omega}\in[\Omega]\in H^{2}(M,\mathbb{R})$ be a symplectic form on $M$ and let $\alpha_{0}\in(0,1)$ be a constant. Suppose $\tilde{\omega}$ is compatible with $J$ and solves the Calabi-Yau equation
\begin{equation}\label{Calabi-Yau equation}
\tilde{\omega}^{n}=\sigma,
\end{equation}
where $\sigma$ is a smooth positive volume form on $M$. Suppose we have
\begin{equation*}
tr_{g_{\Omega}}g_{\tilde{\omega}}\leq C_{0}~~in~~B_{R},
\end{equation*}
where $B_{R}$ is a geodesic $g_{\Omega}$-ball of radius $R$. Then we have the following estimate
\begin{equation*}
\|g_{\tilde{\omega}}\|_{C^{\alpha_{0}}(B_{\frac{R}{2}})}\leq C,
\end{equation*}
where $C$ depends only on $(M,J,\Omega)$, $R$, $\alpha_{0}$, $C_{0}$ and $\|\sigma\|_{C^{\alpha_{0}}(B_{R})}$.
\end{theorem}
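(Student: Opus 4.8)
The plan is to reduce the Calabi-Yau equation for the symplectic form $\tilde\omega$ to a local complex Monge-Amp\`{e}re type equation in suitable coordinates, and then apply Theorem \ref{Main Theorem} (in its local/almost-complex form, which is exactly what Theorem \ref{PDE Theorem} was tailored to cover). First I would choose local coordinates adapted to $J$: around a point of $B_R$ one can find a smooth coordinate chart in which $J$ agrees with the standard complex structure $J_0$ up to first order, so that $\Omega$ and $\tilde\omega$ are close to constant-coefficient K\"{a}hler forms on a small ball. Since $\tilde\omega\in[\Omega]$, on this contractible chart we may write $\tilde\omega = \Omega + \sqrt{-1}\,\partial_J\bar\partial_J u$ for some local potential $u$ (this is the standard $\partial\bar\partial$-lemma argument for taming/compatible forms, as used by Tosatti-Weinkove-Yau \cite{TWY} and Weinkove \cite{We07}); the equation $\tilde\omega^n = \sigma$ then becomes a nonlinear second-order equation for $u$ which, after absorbing the lower-order torsion terms of the almost complex structure, has precisely the structure $F(S(x)+T(D^2u,x),x)=f$ covered by the hypotheses \textbf{H}1--\textbf{H}3, with $f$ built from $\log(\sigma/\Omega^n)\in C^{\alpha_0}$.

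Next I would verify the hypotheses. The hypothesis $tr_{g_\Omega} g_{\tilde\omega}\le C_0$ is exactly the Laplacian bound $\Delta u \le K$ required in the setup of Theorem \ref{Main Theorem}; combined with the equation $\tilde\omega^n=\sigma$ and a standard arithmetic-geometric mean argument this forces a two-sided bound $\lambda\,\omega_0 \le \tilde\omega \le \Lambda\,\omega_0$ on a slightly smaller ball, which places the relevant matrix $S(x)+T(D^2u,x)$ in a fixed compact convex set $\mathcal{E}$ of positive-definite matrices where $F=\log\det$ is uniformly elliptic, concave, and $C^1$. The $L^\infty$ bound on $u$ is obtained by normalizing the potential (subtracting its average and using the gradient bound coming from $tr_{g_\Omega}g_{\tilde\omega}\le C_0$ together with the fixed background geometry). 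With these in hand, Theorem \ref{PDE Theorem} (equivalently the local content of Theorem \ref{Main Theorem}) yields $u\in C^{2,\gamma_0}$ with $\gamma_0 = \min(\alpha_0,\beta_0)$; here the coefficients $\chi$, $S$ are built from the \emph{smooth} data $\Omega$, $J$, so $\beta_0$ can be taken arbitrarily close to $1$ and hence $\gamma_0 = \alpha_0$. Since $g_{\tilde\omega}$ is expressed in terms of $\Omega$, $J$ and $\partial_J\bar\partial_J u = D^2 u + (\text{smooth lower order})$, the $C^{\alpha_0}$ estimate on $D^2u$ transfers to a $C^{\alpha_0}$ estimate on $g_{\tilde\omega}$; a covering argument over $B_{R/2}$ and tracking of constants gives the stated dependence on $(M,J,\Omega)$, $R$, $\alpha_0$, $C_0$ and $\|\sigma\|_{C^{\alpha_0}(B_R)}$.

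The main obstacle I expect is the passage from the \emph{almost} complex, non-integrable setting to the framework of Theorem \ref{PDE Theorem}: one must check carefully that the torsion terms of $J$ — which appear when one writes $\tilde\omega^n$ in terms of a real potential and its Euclidean Hessian — can be organized into the maps $S$ and $T$ without destroying uniform ellipticity, linearity of $T$ in the Hessian, the positivity/comparability condition \textbf{H}2(3), or the H\"{o}lder control \textbf{H}1(3), \textbf{H}2(1), \textbf{H}3. This is essentially the bookkeeping already carried out in Section 2 of \cite{TWWY} for the almost complex Monge-Amp\`{e}re equation, so I would invoke it rather than redo it; the only genuinely new point is confirming that the local potential $u$ enjoys the a priori $L^\infty$ bound needed to invoke the theorem with constants depending only on the listed data, which follows from the trace bound plus a standard interior gradient estimate on the smooth background $(M,J,\Omega)$.
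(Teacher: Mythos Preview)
Your proposal is correct and takes essentially the same approach as the paper: Section 6 simply observes that locally the Calabi-Yau equation transforms into the almost complex Monge-Amp\`{e}re equation (deferring the details to Section 5 of \cite{TWWY}) and then invokes Theorem \ref{Main Theorem}, with $\gamma_{0}=\alpha_{0}$ because the background data $(M,J,\Omega)$ is smooth. One minor imprecision worth noting: in the non-integrable setting the local ansatz is not $\tilde\omega = \Omega + \sqrt{-1}\,\partial_{J}\bar\partial_{J} u$; rather, one uses that a closed $J$-compatible form locally admits a potential in the sense of \cite{TWY,We07} (roughly $\tilde\omega = \tfrac{1}{2}(dJdu)^{(1,1)}$ plus torsion corrections), and the global cohomology condition $[\tilde\omega]=[\Omega]$ is not what produces the local potential --- but since you already defer this bookkeeping to \cite{TWWY} (it is their Section 5 rather than Section 2), this does not affect the validity of your outline.
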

As we can see, locally, (\ref{Calabi-Yau equation}) can be transformed into the almost complex Monge-Amp\`{e}re equation (see Section 5 of \cite{TWWY}). Then we can use Theorem \ref{Main Theorem} to obtain Theorem \ref{Calabi-Yau equation theorem}.

\section{Appendix}
In this section, we recall some basic results which are crucial to our proof of Theorem \ref{Main Theorem}. Let $\Omega$ be a bounded connected open set in $\mathbb{R}^{n}$ and let $v\in L^{1}(\Omega)$. For any ball $B_{x_{0}}(r)\subset\Omega$, we define
\begin{equation*}
v_{x_{0},r}=\frac{1}{|B_{x_{0}}(r)|}\int_{B_{x_{0}}(r)}v(x)dx.
\end{equation*}
For any unit vector $e$, $h>0$ and $\alpha\in(0,1)$, we define the $\alpha$ difference quotient in the direction $e$ by
\begin{equation*}
D_{e}^{h,\alpha}v(x)=\frac{v(x+he)-v(x)}{h^{\alpha}}.
\end{equation*}

We have the following theorem.
\begin{theorem}\label{Lp regular theorem}
Let $v\in L^{p}(\Omega)$ for any $p\in(2,\infty)$ and let $\Omega'\subset\subset\Omega$. Suppose $D_{e}^{h,\alpha}v\in L^{p}(\Omega')$ for some $\alpha\in(0,1)$ satisfies
\begin{equation*}
\|D_{e}^{h,\alpha}v\|_{L^{p}(\Omega')}\leq M,
\end{equation*}
for any $0<h<dist(\Omega',\partial\Omega)$ and unit vector $e$, where $M$ is a constant independent of $h$ and $e$. Then we have $v\in C^{\gamma}(\Omega'')$ for any $\gamma\in(0,\alpha)$ and any $\Omega''\subset\subset\Omega'$. And we have the following estimate
\begin{equation*}
\|v\|_{C^{\gamma}(\Omega'')}\leq C,
\end{equation*}
where $C$ depends only on $n$, $\gamma$, $M$, $dist(\Omega',\partial\Omega)$, $diam(\Omega)$, $dist(\Omega'',\partial\Omega')$ and $\|v\|_{L^{1}(\Omega')}$.
\end{theorem}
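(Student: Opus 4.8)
The plan is to read the hypothesis as a uniform Nikolskii-type bound, $\|v(\cdot+he)-v\|_{L^{p}(\Omega')}\le Mh^{\alpha}$ for all unit $e$ and all $0<h<\dist(\Omega',\partial\Omega)$, and to upgrade it to Hölder continuity by a dyadic mollification argument. First I would fix $\gamma\in(0,\alpha)$ and choose once and for all an exponent $p=p(n,\alpha,\gamma)$ with $p>\max\{2,n/(\alpha-\gamma)\}$; then $p>n$, so Morrey's embedding $W^{1,p}\hookrightarrow C^{0}$ is available, and $\alpha-\tfrac np>\gamma$. The hypothesis is then applicable for this $p$.

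Next, fix a standard mollifier $\rho$, set $\rho_{\varepsilon}(x)=\varepsilon^{-n}\rho(x/\varepsilon)$ and $v^{(j)}=v*\rho_{2^{-j}}$, defined on the set of points at distance $>2^{-j}$ from $\partial\Omega'$. Let $j_{*}$ be the least integer with $2^{-j_{*}+1}<\min\{\dist(\Omega'',\partial\Omega'),\dist(\Omega',\partial\Omega)\}$, so that every $v^{(j)}$ with $j\ge j_{*}$ is defined on a fixed neighbourhood $\mathcal N$ of $\Omega''$ with $\mathcal N\subset\subset\Omega'$. Using $\int D^{m}\rho_{\varepsilon}=0$ for $m\ge1$ one writes $v^{(j)}(x)-v(x)=\int(v(x-y)-v(x))\rho_{2^{-j}}(y)\,dy$ and $D^{m}v^{(j)}(x)=\int(v(x-y)-v(x))D^{m}\rho_{2^{-j}}(y)\,dy$, so the Nikolskii bound gives, for $j\ge j_{*}$ and $m=1,2$,
\begin{equation*}
\|v^{(j)}-v\|_{L^{p}(\mathcal N)}\le CM\,2^{-j\alpha},\qquad\|D^{m}v^{(j)}\|_{L^{p}(\mathcal N)}\le CM\,2^{j(m-\alpha)},
\end{equation*}
with $C=C(n,p,\rho)$. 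Covering $\Omega''$ by balls of radius $2^{-j}$ and applying Morrey's inequality rescaled to that radius turns these into the pointwise bounds
\begin{equation*}
\|v^{(j+1)}-v^{(j)}\|_{L^{\infty}(\Omega'')}\le CM\,2^{-j(\alpha-n/p)},\qquad\|Dv^{(j)}\|_{L^{\infty}(\Omega'')}\le CM\,2^{j(1-\alpha+n/p)}.
\end{equation*}

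Since $\alpha-\tfrac np>0$, the telescoping series $v^{(j_{*})}+\sum_{j\ge j_{*}}(v^{(j+1)}-v^{(j)})$ converges uniformly on $\Omega''$, and as $v^{(j)}\to v$ in $L^{p}$ its sum is the continuous representative of $v$. For $x,y\in\Omega''$ with $d=|x-y|<2^{-j_{*}}$ I pick $J\ge j_{*}$ with $2^{-J}\le d<2^{-J+1}$ and split $v(x)-v(y)=(v(x)-v^{(J)}(x))+(v^{(J)}(x)-v^{(J)}(y))+(v^{(J)}(y)-v(y))$: the two outer terms are each $\le CM\sum_{j\ge J}2^{-j(\alpha-n/p)}\le CM\,d^{\alpha-n/p}$, while the middle term is $\le\|Dv^{(J)}\|_{L^{\infty}(\Omega'')}\,d\le CM\,2^{J(1-\alpha+n/p)}d\le CM\,d^{\alpha-n/p}$. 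The $L^{\infty}$ norm of $v$ on $\Omega''$ is bounded by $\|v^{(j_{*})}\|_{L^{\infty}(\Omega'')}+\sum_{j\ge j_{*}}\|v^{(j+1)}-v^{(j)}\|_{L^{\infty}(\Omega'')}$, and Young's inequality gives $\|v^{(j_{*})}\|_{L^{\infty}}\le\|\rho_{2^{-j_{*}}}\|_{L^{\infty}}\|v\|_{L^{1}(\Omega')}\le C\dist(\Omega'',\partial\Omega')^{-n}\|v\|_{L^{1}(\Omega')}$, so $\|v\|_{L^{\infty}(\Omega'')}\le C(\|v\|_{L^{1}(\Omega')}+M)$; this also handles $|x-y|\ge2^{-j_{*}}$. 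Finally, since $\alpha-\tfrac np>\gamma$, one replaces $d^{\alpha-n/p}$ by $d^{\gamma}$ at the cost of a factor $\diam(\Omega)^{(\alpha-n/p)-\gamma}$, and collecting the estimates gives $\|v\|_{C^{\gamma}(\Omega'')}\le C$ with $C$ depending only on the stated data.

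The analytic content is routine; the delicate part is purely bookkeeping — ensuring that every mollification and every invocation of the difference-quotient bound sees $v$ only on $\Omega'$ (which is why the dyadic sum must be started at the scale $2^{-j_{*}}$ comparable to $\dist(\Omega'',\partial\Omega')$ and to $\dist(\Omega',\partial\Omega)$), and extracting the zeroth-order information at that single coarsest scale via Young's inequality so that the final constant involves $\|v\|_{L^{1}(\Omega')}$ rather than $\|v\|_{L^{p}(\Omega')}$. An equivalent route is to prove the Campanato estimate $\int_{B_{\rho}(x_{0})}|v-v_{x_{0},\rho}|^{p}\,dx\le CM^{p}\rho^{\alpha p}$ for $B_{\rho}(x_{0})\subset\Omega'$ directly from the hypothesis and then apply Campanato's isomorphism $\mathcal L^{p,\,n+p\gamma}\cong C^{\gamma}$, using that $\alpha p\ge n+\gamma p$ for the chosen $p$; the mollification argument above is simply an unpacking of this.
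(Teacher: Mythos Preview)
Your argument is correct. Both you and the paper start from the same mollification estimates --- $\|v_{\varepsilon}-v\|_{L^{p}}\le CM\varepsilon^{\alpha}$ and $\|Dv_{\varepsilon}\|_{L^{p}}\le CM\varepsilon^{\alpha-1}$ --- obtained from the Nikolskii bound, but then diverge. The paper takes the Campanato route you mention at the end as an alternative: from those two bounds and Poincar\'e's inequality it proves $\|v-v_{x_{0},r}\|_{L^{p}(B_{x_{0}}(r))}\le CMr^{\alpha}$ (choosing $\varepsilon\sim r$), passes by H\"older to the $L^{2}$-Campanato condition with exponent $n+2(\alpha-n/p)$, and then invokes Campanato's characterisation of $C^{\gamma}$ as a separate lemma. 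Your primary argument instead runs the mollification at dyadic scales, upgrades the $L^{p}$ bounds to $L^{\infty}$ via rescaled Morrey, and telescopes directly. Your version is a bit more self-contained (no auxiliary Campanato lemma) and yields the slightly sharper exponent $\alpha-n/p$ in one stroke; the paper's version is more modular, separating the mean-oscillation growth from the passage to pointwise H\"older regularity. Either way the bookkeeping you flag --- starting the dyadic sum at a scale comparable to both $\dist(\Omega'',\partial\Omega')$ and $\dist(\Omega',\partial\Omega)$, and reading off the $L^{\infty}$ bound from $\|v\|_{L^{1}(\Omega')}$ at that coarsest scale --- matches what the paper does when it chooses $\varepsilon=\min\{r,\,d_{0}r/\diam(\Omega)\}$ and tracks the $\|v\|_{L^{1}}$ contribution through Campanato's lemma.
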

In order to prove Theorem \ref{Lp regular theorem}, it suffices to prove the following lemmas.

\begin{lemma}\label{mollifier lemma}
Under the assumptions of Theorem \ref{Lp regular theorem}, for any $B_{x_{0}}(r)\subset\Omega'$, we have the following estimate
\begin{equation*}
\int_{B_{x_{0}}(r)}|v(x)-v_{x_{0},r}|^{2}dx\leq C\left(n,\alpha,p,d_{0},diam(\Omega)\right)M^{2}r^{n+2(\alpha-\frac{n}{p})},
\end{equation*}
where $d_{0}=dist(\Omega',\partial\Omega)$.
\end{lemma}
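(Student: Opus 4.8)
The plan is to exploit the finite-difference bound in a single direction and then iterate over the coordinate directions to control the full oscillation of $v$ on the ball. First I would reduce to estimating, for a fixed unit vector $e$, the quantity $\int_{B}|v(x+he)-v(x)|^{2}\,dx$ for all admissible $h$. Writing $D_{e}^{h,\alpha}v=h^{-\alpha}(v(\cdot+he)-v(\cdot))$ and using $\|D_{e}^{h,\alpha}v\|_{L^{p}(\Omega')}\le M$ together with H\"older's inequality on the ball $B=B_{x_{0}}(r)$ (whose measure is comparable to $r^{n}$), one gets
\begin{equation*}
\left(\int_{B}|v(x+he)-v(x)|^{2}\,dx\right)^{1/2}\le h^{\alpha}\,|B|^{\frac{1}{2}-\frac{1}{p}}\,\|D_{e}^{h,\alpha}v\|_{L^{p}(\Omega')}\le C\,h^{\alpha}r^{n(\frac12-\frac1p)}M,
\end{equation*}
valid whenever $h<d_{0}$ and the translated ball stays inside $\Omega'$; restricting to $h\le r$ and shrinking $B$ slightly (or using that $r$ is bounded by $\operatorname{diam}(\Omega)$) takes care of the containment issue, with the loss absorbed into the constant.

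Next I would upgrade the one-direction estimate to an estimate comparing $v(x)$ with its average $v_{x_{0},r}$. The standard device is to write, for $x,y\in B_{x_{0}}(r/2)$ say, the difference $v(x)-v(y)$ as a telescoping sum of at most $n$ increments $v(x)-v(z_{1})$, $v(z_{1})-v(z_{2})$, \dots, each of which is a translation by a vector of length $\le |x-y|\le r$ parallel to a coordinate axis, keeping the intermediate points inside a slightly larger ball $B_{x_{0}}(r)$. Integrating $|v(x)-v(y)|^{2}$ over $x,y\in B_{x_{0}}(r/2)$ and using Cauchy--Schwarz to split the telescoping sum, each term reduces (after a change of variables and Fubini) to an integral of the form $\int|v(\cdot+he)-v(\cdot)|^{2}$ with $h=|x_{i}-y_{i}|\le r$, which is controlled by the displayed bound above. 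This yields
\begin{equation*}
\int_{B_{x_{0}}(r/2)}\int_{B_{x_{0}}(r/2)}|v(x)-v(y)|^{2}\,dx\,dy\le C\, M^{2}\,r^{2\alpha}\,r^{2n(\frac12-\frac1p)}\,r^{n}=C\,M^{2}\,r^{n+2(\alpha-\frac{n}{p})}\cdot r^{n},
\end{equation*}
and since $\int_{B}|v-v_{x_{0},r}|^{2}\le |B|^{-1}\int_{B}\int_{B}|v(x)-v(y)|^{2}$, dividing by $|B_{x_{0}}(r/2)|\sim r^{n}$ gives exactly the claimed power $r^{n+2(\alpha-\frac{n}{p})}$. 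A minor bookkeeping point is that the average over $B_{x_{0}}(r)$ versus over $B_{x_{0}}(r/2)$ differ by a term of the same order, by the usual inequality $\int_{B}|v-v_{x_{0},r}|^{2}\le 2\int_{B}|v-c|^{2}$ for any constant $c$, so one may freely pass between them.

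The main obstacle I expect is purely technical rather than conceptual: making the domain-containment arguments precise so that every translated ball and every intermediate point in the telescoping chain stays inside $\Omega'$, so that the hypothesis $\|D_{e}^{h,\alpha}v\|_{L^{p}(\Omega')}\le M$ actually applies. This is handled by working on $B_{x_{0}}(r/2)$ while allowing increments up to $r$, and by noting that the statement only needs to be proved for $r$ small compared to $d_{0}=\operatorname{dist}(\Omega',\partial\Omega)$ — for $r$ bounded below the estimate is trivial after adjusting the constant using $\operatorname{diam}(\Omega)$ and $\|v\|_{L^{1}(\Omega')}$ (indeed $\|v\|_{L^2}$ is controlled once we have it on small balls by a covering argument). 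Everything else is H\"older's inequality, Fubini, and the triangle inequality, with all constants depending only on $n$, $\alpha$, $p$, $d_{0}$ and $\operatorname{diam}(\Omega)$ as asserted.
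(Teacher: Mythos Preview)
Your approach is correct and genuinely different from the paper's. The paper mollifies: it sets $v_\varepsilon = v * \varphi_\varepsilon$, uses the difference-quotient hypothesis to bound both $\|v_\varepsilon - v\|_{L^p(B)} \leq CM\varepsilon^\alpha$ and $\|Dv_\varepsilon\|_{L^p(B)} \leq CM\varepsilon^{\alpha-1}$, applies the Poincar\'e inequality to the smooth function $v_\varepsilon$ to get $\|v_\varepsilon-(v_\varepsilon)_{x_0,r}\|_{L^p}\le CM\varepsilon^{\alpha-1}r$, and then chooses $\varepsilon\sim r$; a final H\"older step passes from $L^p$ to $L^2$. Your route avoids mollification and Poincar\'e entirely, working instead with the identity $\int_B|v-v_B|^2 = \tfrac{1}{2|B|}\int_B\int_B|v(x)-v(y)|^2\,dx\,dy$ and a coordinate-wise telescoping of $v(x)-v(y)$. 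This is more elementary and arguably more transparent; the paper's approach is slicker in that the difference-quotient information is converted once into a gradient bound and then Poincar\'e does the rest.

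One small correction: your fallback for the domain issue (``for $r$ bounded below the estimate is trivial after adjusting the constant using $\|v\|_{L^1(\Omega')}$'') is not acceptable as stated, since the constant in the lemma is not permitted to depend on any norm of $v$. The clean fix is to subdivide any increment $h$ with $|h|\ge d_0$ into at most $\lceil \operatorname{diam}(\Omega)/d_0\rceil$ pieces of length $<d_0$; all intermediate points lie on the segment joining $z_{k-1}$ and $z_k$, hence in $B_{x_0}(r)\subset\Omega'$, so the hypothesis applies to each piece. This costs only a factor depending on $d_0$, $\operatorname{diam}(\Omega)$ and $\alpha$, which is allowed.
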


\begin{proof}
For any $\varepsilon\in(0,d_{0})$, we denote by $v_{\varepsilon}$ the regularization of $v$, that is,
\begin{equation*}
v_{\varepsilon}(x)=\varepsilon^{-n}\int_{\mathbb{R}^{n}}v(y)\varphi(\frac{x-y}{\varepsilon})dy=\int_{B_{0}(1)}v(x+\varepsilon y)\varphi(y)dy,
\end{equation*}
for any $x\in\Omega'$, where $\varphi$ is a mollifier. For any $B_{x_{0}}(r)\subset\Omega'$, we compute
\begin{equation*}
\begin{split}
\|v_{\varepsilon}-v\|_{L^{p}(B_{x_{0}}(r))}^{p}&=\int_{B_{x_{0}}(r)}\left|\int_{B_{0}(1)}\left(v(x+\varepsilon y)-v(x)\right)\varphi(y)dy\right|^{p}dx\\
&\leq C(n,p)\int_{B_{0}(1)}\left(\int_{B_{x_{0}}(r)}\left|v(x+\varepsilon y)-v(x)\right|^{p}\varphi(y)^{p}dx\right)dy\\
&\leq C(n,p)M^{p}\varepsilon^{p\alpha}\int_{B_{0}(1)}|y|^{p\alpha}\varphi(y)^{p}dy\\
&\leq C(n,p)M^{p}\varepsilon^{p\alpha},
\end{split}
\end{equation*}
which implies
\begin{equation}\label{mollifier lemma equation1}
\|v_{\varepsilon}-v\|_{L^{p}(B_{x_{0}}(r))}\leq C(n,p)M\varepsilon^{\alpha}.
\end{equation}
We then compute
\begin{equation}\label{mollifier lemma equation2}
\begin{split}
\|Dv_{\varepsilon}\|_{L^{p}(B_{x_{0}}(r))}^{p}&=\int_{B_{x_{0}}(r)}\left|\varepsilon^{-1}\int_{B_{0}(1)}\left(v(x+\varepsilon y)-v(x)\right)D\varphi(y)dy\right|^{p}dx\\
&\leq C(n,p)\varepsilon^{-p}\int_{B_{0}(1)}\left(\int_{B_{x_{0}}(r)}\left|v(x+\varepsilon y)-v(x)\right|^{p}|D\varphi(y)|^{p}dx\right)dy\\
&\leq C(n,p)M^{p}\varepsilon^{p(\alpha-1)}\int_{B_{0}(1)}|y|^{p\alpha}|D\varphi(y)|^{p}dy\\
&\leq C(n,p)M^{p}\varepsilon^{p(\alpha-1)}.
\end{split}
\end{equation}
Combining (\ref{mollifier lemma equation2}) and the Poincar\'{e} inequality, we can get
\begin{equation}\label{mollifier lemma equation3}
\begin{split}
\|v_{\varepsilon}-(v_{\varepsilon})_{x_{0},r}\|_{L^{p}(B_{x_{0}}(r))}&\leq C(n,p)r\|Dv_{\varepsilon}\|_{L^{p}(B_{x_{0}}(r))}\\
&\leq C(n,p)M\varepsilon^{\alpha-1}r.
\end{split}
\end{equation}
It is clear that
\begin{equation}\label{mollifier lemma equation4}
\begin{split}
\|(v_{\varepsilon})_{x_{0},r}-v_{x_{0},r}\|_{L^{p}(B_{x_{0}}(r))}&\leq C(n)r^{\frac{n}{p}-n}\int_{B_{x_{0}}(r)}|v_{\varepsilon}(x)-v(x)|dx\\
&\leq C(n)\|v_{\varepsilon}-v\|_{L^{p}(B_{x_{0}}(r))}\\
&\leq C(n,p)M\varepsilon^{\alpha},
\end{split}
\end{equation}
where we use (\ref{mollifier lemma equation1}) in the last line. Combining (\ref{mollifier lemma equation1}), (\ref{mollifier lemma equation3}) and (\ref{mollifier lemma equation4}), we obtain
\begin{equation*}
\begin{split}
&~\|v-v_{x_{0},r}\|_{L^{p}(B_{x_{0}}(r))}\\
\leq&~ \|v-v_{\varepsilon}\|_{L^{p}(B_{x_{0}}(r))}+\|v_{\varepsilon}-(v_{\varepsilon})_{x_{0},r}\|_{L^{p}(B_{x_{0}}(r))}+\|(v_{\varepsilon})_{x_{0},r}-v_{x_{0},r}\|_{L^{p}(B_{x_{0}}(r))}\\
\leq&~C(n,p)M\varepsilon^{\alpha-1}(\varepsilon+r).
\end{split}
\end{equation*}
Since $B_{x_{0}}(r)\subset\Omega'$, we have $r\leq\frac{diam(\Omega)}{2}$, which implies
\begin{equation*}
\frac{d_{0}r}{diam(\Omega)}\leq\frac{d_{0}}{2}<d_{0}.
\end{equation*}
Hence, taking $\varepsilon=\min(r,\frac{d_{0}r}{diam(\Omega)})\in(0,d_{0})$, it then follows that
\begin{equation}\label{mollifier lemma equation5}
\|v-v_{x_{0},r}\|_{L^{p}(B_{x_{0}}(r))}\leq C(n,\alpha,p,d_{0},diam(\Omega))Mr^{\alpha}.
\end{equation}
By (\ref{mollifier lemma equation5}) and the H\"{o}lder inequality, we can get
\begin{equation*}
\begin{split}
\int_{B_{x_{0}}(r)}|v(x)-v_{x_{0},r}|^{2}dx &\leq C(n)r^{n-\frac{2n}{p}}\|v-v_{x_{0},r}\|_{L^{p}(B_{x_{0}}(r))}^{2}\\
&\leq C(n,\alpha,p,d_{0},diam(\Omega))M^{2}r^{n+2(\alpha-\frac{n}{p})}.
\end{split}
\end{equation*}
\end{proof}

The next lemma is due to S. Campanato, which characterizes H\"{o}lder continuous functions by the growth of their local integrals.
\begin{lemma}\label{local integrals lemma}
Suppose $v\in L^{1}(\Omega)$ satisfies
\begin{equation}\label{local integrals lemma equation1}
\int_{B_{q}(r)}|v(x)-v_{q,r}|^{2}dx\leq N^{2}r^{n+2\alpha},
\end{equation}
for any $B_{q}(r)\subset\Omega$, for some $\alpha\in(0,1)$. Then $v\in C^{\alpha}(\Omega)$ and we have the following estimate
\begin{equation*}
\|v\|_{L^{\infty}(\Omega')}\leq C(n,\alpha)\left(Nd_{0}^{\alpha}+d_{0}^{-n}\|v\|_{L^{1}(\Omega)}\right)
\end{equation*}
and
\begin{equation*}
[v]_{C^{\alpha}(\Omega')}\leq C(n,\alpha)\left(N+d_{0}^{-n-\alpha}\|v\|_{L^{1}(\Omega)}\right),
\end{equation*}
for any $\Omega'\subset\subset\Omega$, where $d_{0}=dist(\Omega',\partial\Omega)$.
\end{lemma}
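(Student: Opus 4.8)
The plan is to prove the two estimates by the standard iteration argument on dyadic balls, comparing the averages $v_{q,r}$ at successive scales. First I would fix a ball $B_{q}(R)\subset\subset\Omega$ and, for $0<\rho<R$, estimate $|v_{q,\rho}-v_{q,R}|$ by telescoping over the geometric sequence of radii $R_{k}=2^{-k}R$. The key elementary inequality is that for concentric balls $B_{q}(\rho)\subset B_{q}(r)$ one has
\begin{equation*}
|v_{q,\rho}-v_{q,r}|^{2}\leq\frac{1}{|B_{q}(\rho)|}\int_{B_{q}(\rho)}|v(x)-v_{q,r}|^{2}dx\leq\frac{C(n)}{\rho^{n}}\int_{B_{q}(r)}|v(x)-v_{q,r}|^{2}dx,
\end{equation*}
so that applying hypothesis (\ref{local integrals lemma equation1}) with $\rho=R_{k+1}$, $r=R_{k}$ gives $|v_{q,R_{k+1}}-v_{q,R_{k}}|\leq C(n)N R_{k}^{\alpha}$. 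Since $\sum_{k}R_{k}^{\alpha}$ converges (here $\alpha>0$ is all that matters), the sequence $\{v_{q,R_{k}}\}$ is Cauchy; its limit, which I will call $\tilde v(q)$, is independent of $R$ by a further comparison, and it agrees with $v(q)$ for a.e.\ $q$ by the Lebesgue differentiation theorem. Summing the geometric series also yields the quantitative bound $|\tilde v(q)-v_{q,r}|\leq C(n,\alpha)N r^{\alpha}$ for every admissible $r$.

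Next I would prove Hölder continuity of the representative $\tilde v$. Given two points $q_{1},q_{2}\in\Omega'$ with $r:=|q_{1}-q_{2}|<d_{0}$, both balls $B_{q_{1}}(2r)$ and $B_{q_{2}}(2r)$ lie in $\Omega$ and contain $B_{q_{1}}(r)\cap B_{q_{2}}(r)$, whose measure is comparable to $r^{n}$. Then
\begin{equation*}
|v_{q_{1},2r}-v_{q_{2},2r}|^{2}\leq\frac{C(n)}{r^{n}}\left(\int_{B_{q_{1}}(2r)}|v-v_{q_{1},2r}|^{2}+\int_{B_{q_{2}}(2r)}|v-v_{q_{2},2r}|^{2}\right)\leq C(n)N^{2}r^{2\alpha},
\end{equation*}
and combining this with the bound $|\tilde v(q_{i})-v_{q_{i},2r}|\leq C(n,\alpha)N r^{\alpha}$ from the previous paragraph gives $|\tilde v(q_{1})-\tilde v(q_{2})|\leq C(n,\alpha)N|q_{1}-q_{2}|^{\alpha}$, i.e.\ $[v]_{C^{\alpha}(\Omega')}\leq C(n,\alpha)N$ for points at distance less than $d_{0}$; the case $|q_{1}-q_{2}|\geq d_{0}$ is absorbed into the $L^{\infty}$ term below. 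For the sup bound, I pick $r=d_{0}$ (so $B_{q}(d_{0})\subset\Omega$ for $q\in\Omega'$) and write $|\tilde v(q)|\leq|\tilde v(q)-v_{q,d_{0}}|+|v_{q,d_{0}}|\leq C(n,\alpha)N d_{0}^{\alpha}+C(n)d_{0}^{-n}\|v\|_{L^{1}(\Omega)}$, which is the claimed $L^{\infty}$ estimate; feeding this back handles the far-apart points in the Hölder seminorm, adding a term $C(n,\alpha)d_{0}^{-n-\alpha}\|v\|_{L^{1}(\Omega)}$.

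The only genuinely delicate point is the bookkeeping of constants and the identification of $\tilde v$ with $v$: one must check that the limit $\tilde v(q)=\lim_{k}v_{q,2^{-k}R}$ does not depend on the starting radius $R$ (a one-line comparison of two geometric sequences) and that it is the precise representative, which follows because (\ref{local integrals lemma equation1}) forces every point of $\Omega$ to be a Lebesgue point of $v$. Everything else is the routine geometric-series summation; since $\alpha\in(0,1)$ the exponent $n+2\alpha$ is strictly larger than $n$, which is exactly what makes the iteration converge, and no concavity, ellipticity, or PDE structure is used — this is a purely measure-theoretic lemma. I expect the main obstacle to be merely organizational: tracking the dependence of the final constants on $n$, $\alpha$, $d_{0}$ and $\|v\|_{L^{1}(\Omega)}$ as stated, rather than any conceptual difficulty.
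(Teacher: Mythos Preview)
Your proposal is correct and follows essentially the same route as the paper's proof: dyadic comparison of averages to show $\{v_{q,2^{-k}R}\}$ is Cauchy, identification of the limit via Lebesgue differentiation, the two-center estimate $|v_{q_{1},2r}-v_{q_{2},2r}|\leq C(n)N r^{\alpha}$ for nearby points, and the $L^{\infty}$ bound from $r=d_{0}$ to handle far-apart points. The only slip is that for $B_{q_{i}}(2r)\subset\Omega$ you need $r\leq d_{0}/2$ rather than $r<d_{0}$, exactly as the paper takes it; this is a one-symbol fix and does not affect the argument.
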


\begin{proof}
For any $q\in\Omega'$ and $0<r_{1}<r_{2}\leq d_{0}$, it is clear that
\begin{equation*}
\begin{split}
|v_{q,r_{1}}-v_{q,r_{2}}|^{2}&\leq\frac{C(n)}{r_{1}^{n}}\int_{B_{q}(r_{1})}|v(x)-v_{q,r_{1}}|^{2}dx+\frac{C(n)}{r_{1}^{n}}\int_{B_{q}(r_{2})}|v(x)-v_{q,r_{2}}|^{2}dx\\
&\leq C(n)N^{2}r_{1}^{-n}(r_{1}^{n+2\alpha}+r_{2}^{n+2\alpha}),
\end{split}
\end{equation*}
where we use (\ref{local integrals lemma equation1}) in the second line. For any $0<r\leq d_{0}$, taking $r_{1}=\frac{r}{2}$ and $r_{2}=r$, we obtain
\begin{equation*}
|v_{q,r}-v_{q,\frac{r}{2}}|\leq C(n,\alpha)Nr^{\alpha},
\end{equation*}
which implies $\{v_{q,\frac{r}{2^{k}}}\}$ is a Cauchy sequence. It then follows that
\begin{equation}\label{local integrals lemma equation2}
|v_{q,r}-\lim_{k\rightarrow\infty}v_{q,\frac{r}{2^{k}}}|\leq C(n,\alpha)Nr^{\alpha}.
\end{equation}
By the Lebesgue theorem, we have
\begin{equation}\label{local integrals lemma equation3}
\lim_{k\rightarrow\infty}v_{q,\frac{r}{2^{k}}}=v(q)~~~a.e.\Omega'.
\end{equation}
Combining (\ref{local integrals lemma equation2}) and (\ref{local integrals lemma equation3}), we can get that $\{v_{q,r}\}$ converges uniformly to $v(q)$ in $\Omega'$. Since $q\mapsto v_{q,r}$ is continuous for any $r>0$, $v(q)$ is also continuous. And we obtain
\begin{equation}\label{local integrals lemma equation4}
|v_{q,r}-v(q)|\leq C(n,\alpha)Nr^{\alpha},
\end{equation}
for any $q\in\Omega'$ and $0<r\leq d_{0}$. It then follows that
\begin{equation}\label{local integrals lemma equation5}
\begin{split}
\|v\|_{L^{\infty}(\Omega')}&\leq C(n,\alpha)Nd_{0}^{\alpha}+\sup_{q\in\Omega'}|v_{q,d_{0}}|\\
&\leq C(n,\alpha)\left(Nd_{0}^{\alpha}+d_{0}^{-n}\|v\|_{L^{1}(\Omega)}\right).
\end{split}
\end{equation}
We now take any $y_{1},y_{2}\in\Omega'$. If $d=|y_{1}-y_{2}|<\frac{d_{0}}{2}$, then we have
\begin{equation}\label{local integrals lemma equation6}
\begin{split}
|v_{y_{1},2d}-v_{y_{2},2d}|^{2}&\leq\frac{2}{|B_{y_{1}}(d)|}\left(\int_{B_{y_{1}}(2d)}|v(x)-v_{y_{1},2d}|^{2}dx+\int_{B_{y_{2}}(2d)}|v(x)-v_{y_{2},2d}|^{2}dx\right)\\
&\leq C(n)N^{2}d^{2\alpha}.
\end{split}
\end{equation}
Combining (\ref{local integrals lemma equation4}) and (\ref{local integrals lemma equation6}), we can obtain
\begin{equation}\label{local integrals lemma equation7}
\begin{split}
|v(y_{1})-v(y_{2})|&\leq|v(y_{1})-v_{y_{1},2d}|+|v_{y_{1},2d}-v_{y_{2},2d}|+|v(y_{2})-v_{y_{2},2d}|\\
&\leq C(n,\alpha)N|y_{1}-y_{2}|^{\alpha}.
\end{split}
\end{equation}
If $|y_{1}-y_{2}|\geq\frac{d_{0}}{2}$, then we can get
\begin{equation*}
|v(y_{1})-v(y_{2})|\leq 2\|v\|_{L^{\infty}(\Omega')}2^{\alpha}d_{0}^{-\alpha}|y_{1}-y_{2}|^{\alpha}.
\end{equation*}
By (\ref{local integrals lemma equation5}), it is clear that
\begin{equation}\label{local integrals lemma equation8}
|v(y_{1})-v(y_{2})|\leq C(n,\alpha)\left(N+d_{0}^{-n-\alpha}\|v\|_{L^{1}(\Omega)}\right)|y_{1}-y_{2}|^{\alpha}.
\end{equation}
Therefore, combining (\ref{local integrals lemma equation5}), (\ref{local integrals lemma equation7}) and (\ref{local integrals lemma equation8}), we complete the proof.
\end{proof}

~~~~~~
\\

\noindent{Jianchun Chu}\\
School of Mathematical Sciences, Peking University\\
Yiheyuan Road 5, Beijing, 100871, China\\
Email:{chujianchun@pku.edu.cn}
\end{document}